\documentclass[11pt]{article}
\usepackage{amsthm, amsmath, amssymb, amsfonts, url, booktabs, tikz, setspace, fancyhdr, bm}
\usepackage{hyperref}
\usepackage{geometry}
\usepackage{enumerate}
\usepackage[shortlabels]{enumitem}
\usepackage[babel]{microtype}
\usepackage[english]{babel}
\usepackage{comment}
\usepackage{bbm}
\usepackage{csquotes}
\usepackage{mathabx}
\usetikzlibrary{calc, angles, quotes, intersections}
\usepackage{graphicx}
\usepackage{float}
\usepackage{xcolor}

\counterwithin{figure}{section}

\newtheorem{theorem}{Theorem}[section]

\newtheorem{lemma}[theorem]{Lemma}

\newtheorem{claim}[theorem]{Claim}

\theoremstyle{definition}

\newtheorem*{defn-non}{Definition}

\usepackage[capitalise]{cleveref}

\newlist{Case}{enumerate}{2}
\setlist[Case, 1]{%
    label           =   {\bfseries Case \arabic*.},
    labelindent=1em ,labelwidth=1.3cm, labelsep*=1em, leftmargin =!
}
\setlist[Case, 2]{%
    label           =   {\bfseries Subcase \arabic{Casei}.\arabic*.},
    labelindent=-1em ,labelwidth=1.3cm, labelsep*=1em, leftmargin =!
}

\newenvironment{poc}{\begin{proof}[Proof of the claim]}{\end{proof}}

\newcommand*{\abs}[1]{\lvert#1\rvert}

\newcommand{\R}{\mathbb{R}}

\newcommand{\conv}{\operatorname{conv}}
\newcommand{\VC}{\operatorname{VC}}

\title{Tverberg's theorem for unions of convex sets: Sharp bounds and colored extensions}

\author{
Gennian Ge\thanks{School of Mathematical Sciences, Capital Normal University, Beijing 100048, China. Email: gnge@zju.edu.cn. Gennian Ge is supported by the National Key Research and Development Program of China under Grant 2025YFC3409900, the National Natural Science Foundation of China under Grant 12231014, and Beijing Scholars Program.}
\and 
Yang Shu\thanks{School of Mathematical Sciences, University of Science and Technology of China, Hefei, 230026, China. Email: shuyangyyyy@mail.ustc.edu.cn.}
\and
Zixiang Xu\thanks{School of Mathematical Sciences, Zhejiang University, Hangzhou, China. Email: zixiangxu@zju.edu.cn.}
}

\date{}

\begin{document}
\maketitle
\begin{abstract}
Let $f_{r}(d,s_{1},\ldots,s_{r})$ be the least $N$ such that every $N$-point set $P\subseteq\R^{d}$ has an $r$-partition $P=P_{1}\sqcup\cdots\sqcup P_{r}$ with the following property: whenever $C_{i}\supseteq P_{i}$ is a union of at most $s_{i}$ convex sets, one has $\bigcap_{i=1}^{r}C_{i}\ne\emptyset$. A recent breakthrough of Alon and Smorodinsky proved that $f_{r}(d,s,\ldots,s)\le cdr^{2}s^{r}\log r\log(es^{r})$ for an absolute constant $c>0$. In this paper, we determine the asymptotic order in two principal ranges: $f_{2}(2,s,s)=\Theta(s^{2})$, and $f_{r}(d,s,\ldots,s)=\Theta_{d,r}(s^{r}\log{s})$ for every fixed $d,r$ with $d\ge r+2$. The first one determines the order of the extremal function proposed by Kalai from the 1970s. Together, the two results show a sharp dependence on the dimension: for two parts, the logarithmic factor disappears in the plane but is necessary in every fixed dimension $d\ge4$.

Beyond these sharp results, when $r\ge d+1$ we improve the upper bound of Alon and Smorodinsky by proving both $f_{r}(d,s,\ldots,s)\le c_{d}rs^{r}\log(ers^{r})$ and $f_{r}(d,s,\ldots,s)\le c_{d}r^{d+2}s^{d+1}\log(ers)$ through a local Helly-type argument. We also prove $f_{r}(d,s,\ldots,s)>s^{\min\{r,d\}}$ for every $d\ge2$. Finally, we study two colored analogues. The direct B\'{a}r\'{a}ny--Larman-type extension, in which one seeks $r$ disjoint rainbow sets chosen from $d+1$ color classes, fails as soon as two convex pieces are allowed. Nevertheless, a different extension does hold: given sufficiently many prescribed $r$-point classes, one can split every class completely among the $r$ final parts while retaining the required intersection property.
\end{abstract}

\section{Introduction}
Radon's theorem \cite{Radon1921}, one of the basic results of discrete geometry, states that every \(d+2\) points in \(\R^{d}\) can be split into two parts whose convex hulls intersect. Tverberg's theorem \cite{Tverberg1966} is the \(r\)-part extension: every \((r-1)(d+1)+1\) points in \(\R^{d}\) admit a partition into \(r\) parts whose convex hulls have a common point. The theorem has many topological and combinatorial extensions, see, for example, \cite{BjornerLovaszVrecicaZivaljevic1994,BlagojevicMatschkeZiegler2015,Matousek1996,MatousekTancerWagner2012,ZivaljevicVrecica1992} and the survey \cite{BaranySoberon2018}. For the present problem, it is useful to restate Tverberg's conclusion as follows. A partition \(P=P_{1}\sqcup\cdots\sqcup P_{r}\) satisfies \(\bigcap_{i=1}^{r}\operatorname{conv}(P_{i})\ne\emptyset\) if and only if every choice of convex sets \(C_{i}\supseteq P_{i}\), \(i\in [r]\), has nonempty total intersection.

This restatement suggests replacing each convex set \(C_{i}\) by a controlled union of convex sets. We call a set \(s\)-convex if it is a union of at most \(s\) convex sets, so the integer \(s\) measures how far the set is allowed to be from convex. Let \(r\ge2\), \(d\ge1\), and \(s_{1},\ldots,s_{r}\ge1\). Following \cite{AlonSmorodinsky2025}, let \(f_{r}(d,s_{1},\ldots,s_{r})\) be the least integer \(N\) with the following property. Whenever \(P\subseteq\R^{d}\) is finite and \(\abs{P}\ge N\), there is a partition \(P=P_{1}\sqcup\cdots\sqcup P_{r}\) into nonempty parts such that every choice of \(s_{i}\)-convex sets \(C_{i}\supseteq P_{i}\), \(i\in [r]\), satisfies \(\bigcap_{i=1}^{r}C_{i}\ne\emptyset\). Equivalently, we seek a surjective map \(\chi:P\to [r]\), with \(P_{i}=\chi^{-1}(i)\), whose parts cannot be covered by \(s_{i}\)-convex sets with empty total intersection. Radon's theorem gives \(f_{2}(d,1,1)=d+2\), while Tverberg's theorem gives \(f_{r}(d,1,\ldots,1)=(r-1)(d+1)+1\).

Thus the new difficulty begins as soon as some \(s_{i}>1\). An \(s_{i}\)-convex set \(C_{i}\) may contain \(P_{i}\) without containing \(\conv(P_{i})\), so the common point supplied by Tverberg's theorem need not lie in all the sets \(C_{i}\). We ask how large \(P\) must be before one can choose a partition for which every such collection of covers still has a common point.

The problem originates in a question of Kalai from the 1970s about a Radon theorem for unions of convex sets, see \cite{Kalai2017,2025KalaiBolg}. A first finiteness argument, due to B\'{a}r\'{a}ny and Kalai, shows that \(f_{2}(d,s_{1},s_{2})\) is finite for all \(d,s_{1},s_{2}\), but its Ramsey-type bound is enormous. This argument is related to work of B\'{a}r\'{a}ny, Matou\v{s}ek, and P\'{o}r \cite{BaranyMatousekPor2016} and to the semi-algebraic Ramsey theorem of Conlon, Fox, Pach, Sudakov, and Suk \cite{TRansAMS2014}. A recent breakthrough of Alon and Smorodinsky \cite{AlonSmorodinsky2025} established, for an absolute constant \(c>0\), the first effective general estimate
\[
f_{r}(d,s_{1},\ldots,s_{r})\le cdr^{2}\log r\left(\prod_{i=1}^{r}s_{i}\right)\log\left(e\prod_{i=1}^{r}s_{i}\right).
\]
For equal parameters this is \(O(dr^{2}s^{r}\log r\log(es^{r}))\). In the original planar two-part setting it gives \(f_{2}(2,s,s)=O(s^{2}\log(s+1))\). On the other hand, Chen, Ge, Shu, Wang, and Xu \cite{ChenGeShuWangXu2025} constructed \(s^{2}\) planar points for which every red--blue partition is bad, and therefore proved \(f_{2}(2,s,s)>s^{2}\). The logarithmic gap between these bounds was the first unresolved case.

Our main result closes this gap completely.

\begin{theorem}\label{thm:planar}
There exist positive absolute constants \(c_{1},c_{2}\) such that, for every positive integer \(s\),
\[
c_{1}s^{2}\le f_{2}(2,s,s)\le c_{2}s^{2}.
\]
\end{theorem}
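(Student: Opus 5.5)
The lower bound is immediate from the construction of Chen, Ge, Shu, Wang, and Xu \cite{ChenGeShuWangXu2025}, which gives $f_{2}(2,s,s)>s^{2}$. So one may take $c_{1}=1$, and all the work is in the upper bound $f_{2}(2,s,s)\le c_{2}s^{2}$. I would prove it by counting: show that for $N=Cs^{2}$ points in the plane, the number of \emph{bad} red--blue colorings is smaller than the number of surjective colorings, about $2^{N}$. The bound of Alon and Smorodinsky \cite{AlonSmorodinsky2025} loses its $\log s$ exactly because it spends $\log N$ bits on each of $\Theta(s^{2})$ combinatorial objects. The aim is to encode a bad coloring with only $O(s^{2}\log(N/s^{2}))$ bits, which is $O(s^{2})$ when $N=Cs^{2}$.

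First I would normalize a bad coloring. If $P_{\mathrm{red}}\subseteq R_{1}\cup\cdots\cup R_{s}$ and $P_{\mathrm{blue}}\subseteq B_{1}\cup\cdots\cup B_{s}$ with empty intersection, then $R_{i}\cap B_{j}=\emptyset$ for all $i,j$. We may replace each $R_{i}$, $B_{j}$ by the convex hull of the points of $P$ it contains, so every piece is a polygon with vertices in $P$. Each disjoint pair $(R_{i},B_{j})$ is separated by a line $\ell_{ij}$. By the standard rotation argument, $\ell_{ij}$ can be taken to be a common tangent through one vertex of $R_{i}$ and one vertex of $B_{j}$. The red class is then $P\cap\bigcup_{i}\bigcap_{j}H^{+}_{ij}$, where $H^{+}_{ij}$ is the side of $\ell_{ij}$ containing $R_{i}$. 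Hence the coloring is determined by the set of points used by the tangents together with combinatorial incidence data.

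The key step, and the one I expect to be the main obstacle, is to make this encoding cost $\binom{N}{O(s^{2})}\cdot 2^{O(s^{2})}=(eN/s^{2})^{O(s^{2})}$ rather than $N^{O(s^{2})}$ or $(s^{2})!$. The unordered set of at most $4s^{2}$ tangency points is cheap. The danger is specifying which pairs of these points form each line $\ell_{ij}$.

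My first attempt would exploit planarity. For fixed $i$, the tangency points on $R_{i}$ occur in cyclic order along $\partial R_{i}$. The blue pieces around $R_{i}$ appear along this boundary in a cyclic sequence of length $O(s)$. Since the $R_{i}$ are pairwise disjoint from the $B_{j}$, the regions form a planar arrangement. The ``visibility'' bipartite structure between red and blue polygons, obtained by contracting the pieces, should be representable by a planar map with $O(s)$ faces per piece and $O(s^{2})$ edges in total. A planar map with $m$ edges can be specified with $2^{O(m)}$ bits, so the matching between tangency points and lines is determined by the cyclic orders plus $2^{O(s^{2})}$ bits.

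If this fails, I would try to avoid the $s^{2}$ tangents altogether. I would encode instead the boundary of the red union $\bigcup_{i}R_{i}$ inside the convex hull of $P$, arguing that its interaction with the blue union has only $O(s^{2})$ ``turns''. Each turn is charged to a point of $P$, and consecutive turns are stored as a subset, giving again a $\binom{N}{O(s^{2})}$ count.

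Once the encoding bound is in place, the finish is routine. The number of bad colorings is at most $(eN/s^{2})^{Ks^{2}}$ for an absolute constant $K$. With $N=Cs^{2}$ this equals $(eC)^{Ks^{2}}$, which is less than $2^{N}-2=2^{Cs^{2}}-2$ once $C$ is a large absolute constant. Hence some surjective coloring is good, which gives $f_{2}(2,s,s)\le c_{2}s^{2}$ and proves \cref{thm:planar}.
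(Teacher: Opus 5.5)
The lower bound is fine, and your target for the upper bound (an encoding of each bad coloring costing \(\binom{N}{O(s^{2})}2^{O(s^{2})}\)) is the right one. But the step you flag as the main obstacle is the whole difficulty, and neither attempt supplies it.

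Your first attempt relies on a planarity that is not present. Only \(R_i\cap B_j=\emptyset\) is guaranteed; red pieces may overlap one another, and so may blue pieces. The segment joining the two tangency points of \(\ell_{ij}\) can therefore pass through other pieces, and segments for different pairs, say \((R_1,B_2)\) and \((R_2,B_1)\), can cross. So nothing forces the pairing of the \(O(s^{2})\) tangency points into lines to be a plane graph. Without that, you are choosing \(s^{2}\) pairs among \(O(s^{2})\) points, which costs \(s^{\Theta(s^{2})}\): exactly the \(\log s\) you wanted to remove. Even a genuine plane structure helps only through a bound on the number of crossing-free straight-line graphs on a \emph{fixed} \(m\)-point set (Lemma~\ref{lem:plane-graph-count}). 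Knowing an abstract planar map with \(O(m)\) bits does not tell you which points are its vertices.

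Your second attempt is not yet an argument. The vertices of a red/blue boundary are intersections of lines, not points of \(P\), so they cannot simply be charged to points of \(P\) and stored as a subset. You also give neither a bound on the number of turns nor a decoding procedure.

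The paper closes this gap with two steps that are missing from your proposal.
\begin{enumerate}
\item Lemma~\ref{lem:planar-refinement} turns any bad coloring into a crossing-free partition of \(P\) into \(O(s^{2})\) monochromatic blocks. It separates every red piece from every blue piece, giving polygons with \(O(s^{2})\) vertices in total, and takes a shortest fence between the red and blue unions. Minimality plus Euler's formula show the fence has at most \(m+2(p+q)=O(s^{2})\) edges: every face contains a polygon, and every degree-two vertex is a polygon vertex. Triangulating then gives \(O(s^{2})\) triangles, which define the blocks.
\item Lemma~\ref{lem:planar-count} shows there are at most \((cn/k)^{6k}\) crossing-free \(k\)-partitions. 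Because the block hulls are pairwise disjoint, a pseudo-triangulation of them consists of \(3k-3\) noncrossing free supporting segments with endpoints in \(P\). Hence Lemma~\ref{lem:plane-graph-count} bounds the records on a chosen \(m\)-point subset by \(c^{m}\), and the supporting-line subdivision recovers the partition.
\end{enumerate}
The pairwise disjointness of the blocks is what produces the planarity your argument lacks. After that, comparing \(\sum_{k\le 2t}2^{k}N_{k}(P)\) with \(2^{\abs{P}}\) finishes the proof.
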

The lower bound was already shown in~\cite{ChenGeShuWangXu2025} with \(c_{1}=2\), and the upper bound in
Theorem~\ref{thm:planar} is one of the main results of this paper and determines the asymptotic order in the planar case of Kalai's original two-part problem. A particularly surprising feature is the role of the dimension. In the two-part problem, the logarithmic factor in the Alon--Smorodinsky bound disappears completely when \(d=2\). In contrast, our next theorem shows that the same factor is unavoidable in every fixed dimension \(d\ge4\): for \(r=2\), it gives \(f_{2}(d,s,s)=\Theta_{d}(s^{2}\log s)\). Thus the planar case is not merely the first instance of the higher dimensional result, it has genuinely smaller order of growth.

More generally, once \(d\ge r+2\), the logarithmic factor is necessary for every fixed number of parts.

\begin{theorem}\label{thm:lower}
There is an absolute constant \(c>0\) such that, for all \(r\ge2\), \(s\ge1\), and \(d\ge r+2\),
\[
f_{r}(d,s,\ldots,s)\ge c(d-r+2)s^{r}\log(s+1).
\]
\end{theorem}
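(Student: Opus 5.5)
The plan is to exhibit, for $N=\lfloor c(d-r+2)s^{r}\log(s+1)\rfloor$, an $N$-point set $P\subseteq\R^{d}$ such that \emph{every} surjective coloring $\chi:P\to[r]$ is bad. Bad means we can find $s$-convex sets $C_i\supseteq P_i$ with $\bigcap_i C_i=\emptyset$. I will take each $C_i$ to be a union of $s$ convex hulls $\conv(Q_{i,1}),\ldots,\conv(Q_{i,s})$, where $Q_{i,1}\sqcup\cdots\sqcup Q_{i,s}=P_i$. Then $\bigcap_i C_i=\emptyset$ holds if and only if, for every choice $(j_1,\ldots,j_r)\in[s]^r$, the hulls $\conv(Q_{1,j_1}),\ldots,\conv(Q_{r,j_r})$ have no common point. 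The whole argument is therefore a geometric criterion for when $r$ hulls intersect, combined with a combinatorial covering lemma.

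\textbf{Geometric step.} Place $P$ on the moment curve $\gamma(t)=(t,t^2,\ldots,t^d)$, or more flexibly on a union of $r$ such curves placed generically in complementary coordinate blocks; the excess $d-r+2\ge 4$ is what the construction consumes. For points on the moment curve, whether $r$ hulls intersect is governed by an interlacing condition (Gale evenness / cyclic polytope). Two hulls can meet only if the two sets alternate at least $d+2$ times along the curve, and more generally $r$ hulls can meet only if the total number of alternations among their colors is at least of order $(r-1)(d+1)$. I would prove the weaker sufficient statement I actually need: if the merged sequence of $Q_{1,j_1},\ldots,Q_{r,j_r}$ along the curve has fewer than $\beta:=\lfloor (d-r+2)/2\rfloor$ ``color changes'' beyond the trivial ones, then a generic hyperplane-type argument separates them. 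Concretely, I would exhibit a polynomial of degree $\le d$ whose sign pattern along $\gamma$ realizes an $r$-way separation. The reduction then becomes the following: we need each color class to be split into $s$ groups so that every rainbow $r$-tuple of groups has small interlacing complexity.

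\textbf{Combinatorial step (main obstacle).} We need the following statement: every word of length $N\le c\beta s^{r}\log(s+1)$ over the alphabet $[r]$ admits, for each letter $i$, a partition of its occurrences into $s$ groups, such that every rainbow tuple of groups interlaces fewer than $\beta$ times. My first attempt is a product/random labeling. Cut the word into $M\approx s^{r}\log s/\log s\cdot\beta$ consecutive blocks, each of length about $\beta\log s$, and assign to each block a vector $\lambda\in[s]^r$. All letters $i$ in that block go to group $\lambda_i$. A rainbow tuple $(j_1,\ldots,j_r)$ then only sees the blocks with label vectors agreeing with it in some coordinate, and the interlacing count is controlled by the number of blocks whose labels hit many of the $j_i$ simultaneously. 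I would choose the labels by a random or greedy design so that each tuple fully matches at most $O(1)$ blocks, where the $\log s$ factor comes from a union bound over the $s^{r}$ tuples. The alternations contributed by partially matching blocks would be charged using the fact that within a block only $\beta$ letters matter. I expect this to be the hard part, namely making the alternation bound and the union bound match exactly to give the $s^{r}\log s$ threshold. If a clean bound fails here, I would fall back to an explicit recursive construction that interleaves $s$ scaled copies, with the logarithm arising from $\log s$ levels.

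\textbf{Conclusion.} The covering lemma yields, for every $\chi$, groups $Q_{i,j}$ whose rainbow tuples have no common point by the geometric step. Hence $f_r(d,s,\ldots,s)>N$. Adjusting constants, using $\beta\asymp d-r+2$ and treating small $s$ through the trivial bound $f_r\ge (r-1)(d+1)+1$, gives the stated inequality with an absolute $c$.
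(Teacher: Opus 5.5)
There is a genuine gap, and it lies deeper than the step you flag as hard. Your combinatorial covering lemma is false. Moreover, no construction on the moment curve, or on a bounded number of such curves, can produce the factor $\log(s+1)$.

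Take $r=2$ and the alternating word $1,2,1,2,\ldots$ of length $N$, and choose any groups $Q_{1,1},\ldots,Q_{1,s}$ and $Q_{2,1},\ldots,Q_{2,s}$. Each of the $N-1$ adjacent position pairs consists of a point of some $Q_{1,a}$ and a point of some $Q_{2,b}$. These two points are consecutive in the merged sequence of $Q_{1,a}\cup Q_{2,b}$, so they give that pair a color change. Summing over the $s^{2}$ pairs, some pair has at least $(N-1)/s^{2}$ changes. So ``fewer than $\beta$ changes for every rainbow pair'' forces $N\le\beta s^{2}$. Appending single letters $3,\ldots,r$ gives the same contradiction for every $r$.

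This is not an artifact of your criterion; it is a geometric obstruction. If $\conv(Q_{1,a})\cap\conv(Q_{2,b})=\emptyset$ for points on $\gamma$, a strictly separating hyperplane restricts to a nonzero polynomial of degree at most $d$. Hence the merged sequence has at most $d$ color changes. Via Lemma~\ref{lem:component-reduction}, the alternating coloring of any $N\ge ds^{2}+2$ points on the moment curve is therefore good, so no such set is bad for $r=2$. Coloring each curve alternately shows the same for a union of boundedly many curves. Random block labelings, union bounds over $s^{r}$ tuples, and recursive interleavings cannot overcome a counting bound that holds for every grouping.

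The missing idea is a genuinely high-dimensional source of the logarithm. On a curve, unions of $s$ halfspaces have VC-dimension only $O(ds)$. In $\R^{\ell}$ with $\ell\ge4$, however, they have VC-dimension $\Theta(\ell s\log s)$.

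The paper exploits this as follows.
\begin{itemize}
\item It sets $\ell=d-r+2\ge4$; this is where the excess dimension is consumed. It then takes the Csik\'{o}s--Mustafa--Kupavskii set of Lemma~\ref{lem:polytope-vc}: a set $X\subseteq\R^{\ell}$ with $\abs{X}\ge c_{0}\ell s\log(s+1)$, every subset of which is cut out by at most $s$ halfspaces with coordinatewise positive normals.
\item Lemma~\ref{lem:two-part-amplification} places $s$ tiny affine copies of $X$ at vertices of a polytope, mapping the positive normals into the interiors of the vertex normal cones. This yields property $\mathsf{B}(s,s)$ for $s\abs{X}$ points in the same $\R^{\ell}$.
\item Then $r-2$ applications of the facet lifting in Lemma~\ref{lem:facet-lifting}, each adding one dimension and a factor $s$, give $s^{r-1}\abs{X}\ge c_{0}(d-r+2)s^{r}\log(s+1)$ points in $\R^{d}$ with property $\mathsf{B}(s,\ldots,s)$.
\end{itemize}
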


Theorem \ref{thm:lower} and the upper bound of Alon and Smorodinsky~\cite{AlonSmorodinsky2025} determine the dependence on \(s\) throughout this range. For every fixed \(d\) and \(r\) with \(d\ge r+2\),
\[
f_{r}(d,s,\ldots,s)=\Theta_{d,r}(s^{r}\log{s}).
\]

An adaptation of the same construction also improves the range of the previously known power lower bound.

\begin{theorem}\label{thm:power-lower}
For all \(r\ge2\), \(s\ge1\), and \(d\ge2\),
\[
f_{r}(d,s,\ldots,s)>s^{\min\{r,d\}}.
\]
\end{theorem}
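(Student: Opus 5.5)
The plan is to exhibit, for $m=\min\{r,d\}$, a set $P$ of exactly $s^{m}$ points in $\R^{d}$ such that \emph{every} partition $P=P_{1}\sqcup\cdots\sqcup P_{r}$ into nonempty parts is bad. Bad means there are $s$-convex sets $C_{i}\supseteq P_{i}$ with $\bigcap_{i}C_{i}=\emptyset$. This gives $f_{r}(d,s,\ldots,s)>s^{m}$.

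Since an $s$-convex set in an $m$-dimensional affine subspace of $\R^{d}$ is $s$-convex in $\R^{d}$, I will work inside an $m$-dimensional flat and think of $P\subseteq\R^{m}$. When $r>m$ (so $m=d$), the parts $P_{m+1},\ldots,P_{r}$ are simply covered by the whole space, so only the first $m$ covers need to have empty common intersection. The construction generalizes the planar $s\times s$ construction of \cite{ChenGeShuWangXu2025} to a product-type configuration indexed by $[s]^{m}$.

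\textbf{The construction.} Choose $m$ families of hyperplanes $H_{i,1},\ldots,H_{i,s}$ in $\R^{m}$, for $i\in[m]$, in general position, so that for every $a=(a_{1},\ldots,a_{m})\in[s]^{m}$ the flats $H_{1,a_{1}},\ldots,H_{m,a_{m}}$ meet in a single point $p_{a}$. Choose them also so that no other incidences occur: $p_{a}\in H_{i,j}$ iff $a_{i}=j$. Put $P=\{p_{a}:a\in[s]^{m}\}$ and write $L_{i,j}=P\cap H_{i,j}$ for the $j$-th slice in direction $i$.

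\textbf{Covering a partition.} For a given partition, define the $s$-convex set
\[
C_{i}=\bigcup_{j=1}^{s}\conv(P_{i}\cap L_{i,j})\qquad (i\in[m]),
\]
which contains $P_{i}$ because every point lies in exactly one slice of direction $i$. Suppose $q\in\bigcap_{i\le m}C_{i}$. Then for each $i$ there is a $j_{i}$ with $q\in H_{i,j_{i}}$. Hence $q=p_{a}$ for $a=(j_{1},\ldots,j_{m})$, and $p_{a}\in\conv(P_{i}\cap L_{i,a_{i}})$ for every $i\le m$. If in addition $p_{a}$ is an extreme point of $\conv(L_{i,a_{i}})$ for every $i$, this forces $p_{a}\in P_{i}$ for all $i\le m$. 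That is impossible since $m\ge2$ and the parts are disjoint. So the intersection is empty, and every partition is bad. Note that nonemptiness of the parts is not even needed here.

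\textbf{Main obstacle.} The key requirement is that the hyperplanes be chosen so that each slice $L_{i,j}$ is in convex position inside $H_{i,j}$. A flat grid fails this as soon as $s\ge3$. For $m=2$ this is easy: take the two line families tangent to a circle, or slightly perturbed so that each line meets the others at points in convex position on the line. Points on a line are only "convex position" trivially, so for $m=2$ there is in fact no issue: the extreme-point condition fails only in the middle of a line, and there I would instead use the pieces $\conv(P_{i}\cap L_{i,j})$ and check directly that $p_{a}\notin P_{i}$ implies $p_{a}\notin\conv(P_{i}\cap L_{i,a_{i}})$. This fails on a line, so the real plan for general $m$ is different: choose the hyperplanes as tangent hyperplanes to a strictly convex surface, e.g.\ the tangent hyperplanes to the paraboloid at points chosen along $m$ generic directions. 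Then verify, by an inductive or perturbative argument, that within $H_{i,j}$ the induced $(m-1)$-dimensional configuration is again of the same type and in convex position.

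If convex position cannot be achieved, the fallback is to refine the covering pieces. Each slice would be ordered lexicographically ("stretched grid") and the pieces made so that $\conv(P_{i}\cap L_{i,j})$ never captures a point of another part at the unique candidate intersection point. Verifying one of these two geometric conditions is where I expect the real work to lie.
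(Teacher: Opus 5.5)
\textbf{There is a genuine gap.} It sits exactly at the step you flag as the main obstacle. No hyperplane-grid configuration can make the slices convex once $s\ge3$, and this is forced by the incidences, not by a poor choice of hyperplanes.

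Fix $a\in[s]^m$ and $i'\in[m]$. The $s$ points $p_b$ with $b_k=a_k$ for all $k\ne i'$ lie on $\ell_{i'}(a)=\bigcap_{k\ne i'}H_{k,a_k}$. This set is a line: it has dimension at least one, and it meets $H_{i',a_{i'}}$ only in $p_a$. The line lies in $H_{i,a_i}$ for every $i\ne i'$. So every slice $L_{i,j}$ contains $s$ collinear points, and the middle ones are never extreme points of $\conv(L_{i,j})$. Tangent hyperplanes to a paraboloid, or perturbations, cannot change this; for $m=2$ the slice is literally a line, as you noticed.

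In fact your covers $C_i=\bigcup_j\conv(P_i\cap L_{i,j})$ fail outright for every admissible arrangement once $s>2m$:
\begin{enumerate}
\item Each of the $ms^{m-1}$ lines $\ell_{i'}(a)$ has only two extreme points among its $s$ points. Since $s^m>2ms^{m-1}$, some $p_a$ lies strictly inside all $m$ of its lines.
\item Take a cyclic shift $\sigma$ of $[m]$, and give colour $i$ to the two neighbours of $p_a$ on $\ell_{\sigma(i)}(a)\subseteq H_{i,a_i}$. These $2m$ points are distinct, because distinct lines $\ell_{i'}(a)$ meet only at $p_a$.
\item Colour all remaining points arbitrarily. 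Then $p_a\in\bigcap_{i\le m}\conv(P_i\cap L_{i,a_i})\subseteq\bigcap_{i\le m}C_i$.
\end{enumerate}
The stretched-grid fallback is too unspecified to assess. Any refinement of the pieces must also keep at most $s$ pieces per part, which is exactly where the difficulty lies.

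\textbf{The missing idea.} Put the pieces of one new part on \emph{exposed faces}, rather than on hyperplanes cutting through $\conv(P)$, and handle the other parts recursively instead of all by slices. This is what the paper does.
\begin{itemize}
\item It starts from the planar $s^2$-point set with property $\mathsf{B}(s,s)$ (Lemma~\ref{lem:planar-bad-set}).
\item Lemma~\ref{lem:facet-lifting} places $s$ affine copies of the current set inside $s$ distinct facets $F_1,\ldots,F_s$ of a polytope one dimension higher.
\item The new part is covered facet by facet. The old parts are covered by merging, across facets, the pieces given by property $\mathsf{B}$ on each copy, with the new part's points temporarily moved into part~$1$.
\item Since $F_j$ is exposed, a common point on $F_j$ can only be a convex combination of points from the copy on $F_j$. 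Emptiness therefore reduces to the statement with one fewer part in one lower dimension.
\item After $k-2$ liftings, with $k=\min\{r,d\}$, this gives $s^k$ points in $\R^k$. When $r>d$ the parts are merged via a surjection $[r]\to[d]$.
\end{itemize}
Your device of covering the surplus parts by $\R^d$ would serve equally well for that last step, once the surplus parts are first merged into part~$1$ so that property $\mathsf{B}$ applies.
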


Thus \(f_{r}(d,s,\ldots,s)>s^{r}\) already holds for \(d\ge r\), improving the earlier condition \(d\ge2r-2\). In the complementary range \(r> d\), Theorem \ref{thm:power-lower} gives the lower bound \(s^{d}\). This points to a concrete remaining gap: our new upper bound in that range is of order \(s^{d+1}\log{s}\), whereas the new construction gives \(s^{d}\).

We next state our upper bound. Write \(\boldsymbol{s}=(s_{1},\ldots,s_{r})\), \(S(\boldsymbol{s})=\prod_{i=1}^{r}s_{i}\), and \(q=\min\{r,d+1\}\). Throughout this paper, we put
\[
H_{d,r}(\boldsymbol{s})=\sum_{\substack{\emptyset\ne I\subseteq [r]\\ \abs{I}\le d+1}}\abs{I}\prod_{i\in I}s_{i}
\]
and \(T_{d,r}(\boldsymbol{s})=\min\{qS(\boldsymbol{s}),H_{d,r}(\boldsymbol{s})\}\). The sum defining \(H_{d,r}(\boldsymbol{s})\) runs over choices of at most \(d+1\) parts. For each such choice \(I\), the product counts the possible choices of one convex component from every part in \(I\), and the factor \(\abs{I}\) records how many sets are involved.
\begin{theorem}\label{thm:general-upper}
There is an absolute constant \(c\) such that, for all \(r\ge 2\), \(d\ge 1\), and \(s_{1},\ldots,s_{r}\ge 1\),
\[
f_{r}(d,s_{1},\ldots,s_{r})\le cdrT_{d,r}(\boldsymbol{s})\log(erT_{d,r}(\boldsymbol{s})).
\]
\end{theorem}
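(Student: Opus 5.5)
The plan is a random colouring combined with a union bound over a small family of ``certificates of badness''. Each certificate will be built from at most $T_{d,r}(\boldsymbol{s})$ halfspaces in total. A uniformly random map $\chi:P\to[r]$ is compatible with any single certificate with probability at most $(1-1/r)^{N}$. So it suffices that $(1-1/r)^{N}$ times the number of certificates is less than $1/2$, which yields $N\ge c\,d\,r\,T\log(erT)$. A union bound over the $r$ colours also shows that $\chi$ is surjective with probability greater than $1/2$ once $N\gtrsim r\log r$.

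\textbf{Step 1 (reduction to halfspaces).} Suppose $\chi$ is bad. Then there are convex sets $K_{i,1},\ldots,K_{i,s_i}$ with $P_i\subseteq\bigcup_j K_{i,j}$ and $\bigcap_i\bigcup_j K_{i,j}=\emptyset$. We may take $K_{i,j}=\conv(K_{i,j}\cap P)$, so these are polytopes. Empty total intersection means that for every tuple $(j_1,\ldots,j_r)$ the sets $K_{1,j_1},\ldots,K_{r,j_r}$ have empty intersection. We then produce a family $\mathcal{W}$ of witnesses: each witness is a set $I$ together with components $(j_i)_{i\in I}$ such that $\bigcap_{i\in I}K_{i,j_i}=\emptyset$, and every tuple must contain some witness in $\mathcal{W}$.

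There are two ways to choose $\mathcal{W}$. The first is to take, for each of the $S(\boldsymbol{s})$ tuples, a single Helly witness of size at most $q=\min\{r,d+1\}$; this costs at most $qS$ in total size. The second is to take all empty sub-intersections with $\abs{I}\le d+1$; this costs at most $H_{d,r}(\boldsymbol{s})$ in total size, and Helly's theorem guarantees that every tuple contains one. Hence $\sum_{W\in\mathcal{W}}\abs{I_W}\le T_{d,r}(\boldsymbol{s})$.

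For each witness we use the standard separation fact: if $k$ compact convex sets have empty intersection, then there are closed halfspaces $H_1,\ldots,H_k$ with $H_t\supseteq K_t$ and $\bigcap_t H_t=\emptyset$. This follows from separating the diagonal from the product, or by induction using Helly and LP duality. We then replace $K_{i,j}$ by the polyhedron $K'_{i,j}$, defined as the intersection of all halfspaces assigned to it. The covering property and the empty total intersection are preserved, and only $T$ halfspaces are used overall.

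\textbf{Step 2 (counting certificates).} A certificate consists of three pieces of data. The first is the trace on $P$ of each of the at most $T$ halfspaces; there are at most $(eN/(d+1))^{(d+1)}+1$ choices per halfspace. The second is the assignment of each halfspace to a pair $(i,j)$, which gives at most $(rs_{\max})^{T}\le (erT)^{T}$ choices, possibly after a reduction showing that the $s_i$ can be capped. The third is the combinatorial shape of $\mathcal{W}$. Altogether there are at most $(eN)^{O(dT)}(erT)^{O(T)}$ certificates.

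Fix a certificate and a point $p$. Because $\bigcap_i\bigcup_j K'_{i,j}=\emptyset$, the point $p$ lies in $\bigcup_j K'_{i,j}$ for at most $r-1$ colours $i$. A compatible $\chi$ must satisfy $p\in\bigcup_j K'_{\chi(p),j}$. Since the points are coloured independently, the probability that $\chi$ is compatible is at most $(1-1/r)^{N}\le e^{-N/r}$.

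\textbf{Step 3 (conclusion).} The union bound requires
\[
N/r\ \ge\ C\bigl(dT\log(eN)+T\log(erT)\bigr).
\]
Substituting $N=c\,d\,r\,T\log(erT)$ with $c$ large gives $\log(eN)=O(\log(erT)+\log d)$, so the requirement is met and a good surjective partition exists.

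I expect the main obstacle to be Step 1. The separation lemma for $k$ sets must be stated carefully, and we must make sure that each component is charged once per witness while the $H_{d,r}$ option remains valid. In the $H_{d,r}$ option, minimal empty sub-intersections can overlap across tuples, so the halfspace count must be taken over distinct witnesses and not per tuple; this is exactly what the definition of $H_{d,r}(\boldsymbol{s})$ counts. The stray $\log d$ term is absorbed because $d\le T$ whenever $H$ is attained with some $\abs{I}=d+1$; otherwise a short case check is needed.
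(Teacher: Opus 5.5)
Your proposal follows the paper's proof essentially step for step. It uses the reduction to hulls of assigned points and the same two Helly-plus-simultaneous-separation constructions: one witness per full tuple at cost $qS(\boldsymbol{s})$, or all empty subintersections with $\abs{I}\le d+1$ at cost $H_{d,r}(\boldsymbol{s})$. The record consists of the traces $P\cap H$ of at most $T$ halfspaces, a fixed record leaves every point at most $r-1$ admissible labels, and this is compared with the at least $r^{N}/2$ surjective maps. Your random-colouring union bound is the same count.

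The only encoding difference is that you store an explicit assignment of each halfspace to a pair $(i,j)$, at cost $(\sum_i s_i)^{T}\le(rT)^{T}$. No capping of the $s_i$ is needed for this, since $s_{\max}\le T$ always holds. The paper instead pre-indexes the sparse record by the triples $(I,\boldsymbol{a}_I,i)$, and in the full-tuple case stores only the witness map $\omega$, at cost $(er)^{T}$. Both costs are absorbed into $\log(erT)$. Your ``shape of $\mathcal{W}$'' is superfluous, because traces plus assignment already determine each $Q_i\cap P$.

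One small point in Step 1: $\conv(K_{i,j}\cap P)$ may be empty. A singleton witness would then require an empty halfspace, so discard or duplicate such components first, as in Lemma~\ref{lem:component-reduction}.

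There is a genuine error in Step 3, and it needs fixing rather than the patch you propose. In Step 2 you correctly bounded each trace count by $(eN/(d+1))^{d+1}$. In Step 3 you replaced $\log(eN/(d+1))$ by $\log(eN)$, which is what creates the stray $\log d$. Your proposed absorption fails, because $T$ need not dominate $d$ when $r\le d$. For instance, $r=2$ and $s_1=s_2=1$ give $T=\min\{2,4\}=2$ for every $d$. Then $N/r\ge C\,dT\log(eN)$ is false for $N=c\,d\,rT\log(erT)$ once $d$ is large, so no case check can rescue that inequality.

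The fix is simply to keep the denominator. With $N=c\,d\,rT\log(erT)$ one has $N/(d+1)\le c\,rT\log(erT)$. Hence $(d+1)T\log\bigl(eN/(d+1)\bigr)\le 2dT\bigl(\log c+O(\log(erT))\bigr)$, and the required inequality holds for a large absolute $c$. This is exactly the paper's final computation, via $\tau_d(P)\le(em/(d+1))^{d+1}$ from Lemma~\ref{lem:halfspace-subsets}.
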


The key ingredient is a local Helly-type analysis. Instead of counting information attached to all \(r\) parts at once, we first look at each local obstruction and keep only the part indices needed to witness the empty intersection.

Let us make the equal-parameter consequences explicit. If \(s_{1}=\cdots=s_{r}=s\), then
\[
H_{d,r}(s,\ldots,s)=\sum_{k=1}^{q}k\binom{r}{k}s^{k}.
\]
When \(r\le d+1\), one has \(T_{d,r}(s,\ldots,s)=rs^{r}\), and Theorem~\ref{thm:general-upper} gives \(f_{r}(d,s,\ldots,s)\le cdr^{2}s^{r}\log(er^{2}s^{r})\). For large \(s\), the visible improvement is essentially the missing \(\log r\) factor. Here and below, \(c_{d}\) denotes a positive constant depending only on \(d\), whose value may change from one occurrence to the next. When \(r\ge d+1\), one has
\[
T_{d,r}(s,\ldots,s)\le \min\{(d+1)s^{r},c_{d}r^{d+1}s^{d+1}\}.
\]
The first term gives \(f_{r}(d,s,\ldots,s)\le c_{d}rs^{r}\log(ers^{r})\). The second term gives
\[
f_{r}(d,s,\ldots,s)\le c_{d}r^{d+2}s^{d+1}\log(ers).
\]
The second upper bound is stronger at the level of the main factors whenever \(s^{r-d-1}\ge c_{d}r^{d+1}\). In particular, when \(s\gg r\gg d\), it significantly improves the Alon--Smorodinsky bound.

For reference, the next three tables collect and compare the known bounds for equal parameters. The sharp planar estimate is listed first. The ranges in the upper-bound table may overlap, since both estimates from Theorem~\ref{thm:general-upper} can be useful when \(r\ge d+1\). The remaining rows record the broader parameter ranges, and the third table places the best current bounds side by side when \(d\) and \(r\) are fixed and \(s\) tends to infinity.

\begin{center}
\small
\textbf{Upper bounds for \(f_{r}(d,s,\ldots,s)\)}\par\smallskip
\renewcommand{\arraystretch}{1.18}
\begin{tabular}{@{}p{0.18\textwidth}p{0.25\textwidth}p{0.49\textwidth}@{}}
\toprule
Parameter range & Position & Upper bound \\
\midrule
\(d=r=2\) & Theorem~\ref{thm:planar} & \(O(s^{2})\) \\
\(d\ge1\), \(r\ge2\) & Alon--Smorodinsky \cite{AlonSmorodinsky2025} & \(O\bigl(dr^{2}s^{r}\log r\log(es^{r})\bigr)\) \\
\(2\le r\le d+1\) & Theorem~\ref{thm:general-upper} & \(O\bigl(dr^{2}s^{r}\log(er^{2}s^{r})\bigr)\) \\
\(r\ge d+1\) & Theorem~\ref{thm:general-upper} & \(O_{d}\bigl(rs^{r}\log(ers^{r})\bigr)\) \\
\(r\ge d+1\) & Theorem~\ref{thm:general-upper} & \(O_{d}\bigl(r^{d+2}s^{d+1}\log(ers)\bigr)\) \\
\bottomrule
\end{tabular}
\end{center}

\begin{center}
\small
\textbf{Lower bounds for \(f_{r}(d,s,\ldots,s)\)}\par\smallskip
\renewcommand{\arraystretch}{1.18}
\begin{tabular}{@{}p{0.18\textwidth}p{0.25\textwidth}p{0.49\textwidth}@{}}
\toprule
Parameter range & Position & Lower bound \\
\midrule
\(d=r=2\) & Chen-Ge-Shu-Wang-Xu~\cite{ChenGeShuWangXu2025} & \(f_{2}(2,s,s)>s^{2}\) \\
\(d\ge2r-2\) & Chen-Ge-Shu-Wang-Xu~\cite{ChenGeShuWangXu2025} & \(f_{r}(d,s,\ldots,s)>(d-2r+4)s^{r}\) \\
\(2\le d<r\) & Theorem~\ref{thm:power-lower} & \(f_{r}(d,s,\ldots,s)>s^{d}\) \\
\(r\le d\le r+1\) & Theorem~\ref{thm:power-lower} & \(f_{r}(d,s,\ldots,s)>s^{r}\) \\
\(d\ge r+2\) & Theorem~\ref{thm:lower} & \(f_{r}(d,s,\ldots,s)\ge c(d-r+2)s^{r}\log(s+1)\) \\
\bottomrule
\end{tabular}
\end{center}

\begin{center}
\small
\textbf{Comparison for fixed \(d,r\) as \(s\to\infty\)}\par\smallskip
\renewcommand{\arraystretch}{1.18}
\begin{tabular}{@{}p{0.14\textwidth}p{0.23\textwidth}p{0.24\textwidth}p{0.30\textwidth}@{}}
\toprule
Range & Best lower bound & Best upper bound & Current status \\
\midrule
\(d=r=2\) & \(\Omega(s^{2})\) & \(O(s^{2})\) & Asymptotically sharp \\
\(d\ge r+2\) & \(\Omega_{d,r}\bigl(s^{r}\log{s}\bigr)\) & \(O_{d,r}\bigl(s^{r}\log{s}\bigr)\) & Asymptotically sharp \\
\(r\le d\le r+1\) & \(\Omega_{d,r}(s^{r})\) & \(O_{d,r}\bigl(s^{r}\log{s}\bigr)\) & Sharp up to \(\log{s}\) \\
\(2\le d<r\) & \(\Omega_{d,r}(s^{d})\) & \(O_{d,r}\bigl(s^{d+1}\log{s}\bigr)\) & Gap at most \(s\log{s}\) \\
\bottomrule
\end{tabular}
\end{center}

Taken together, the tables isolate both the solved ranges and the remaining gaps in the dependence on \(s\). The original planar case \(d=r=2\) is now sharp, as is every fixed pair \((d,r)\) with \(d\ge r+2\). Apart from the planar case, when \(d\in\{r,r+1\}\) only a logarithmic gap remains, while for \(2\le d<r\) the best bounds differ by one power of \(s\), up to a logarithmic factor.

We next turn to colored versions. In the classical colored extensions of Tverberg's theorem, the partition must respect prescribed classes of points. The theorem of B\'ar\'any and Larman \cite{BaranyLarman1992}, for example, asks for pairwise disjoint rainbow sets, each taking one point from each prescribed class, whose convex hulls meet. This naturally raises the question whether the B\'ar\'any--Larman rainbow formulation remains valid after replacing the convex-hull intersection condition by the robust requirement that every choice of \(s\)-convex supersets of the selected rainbow sets has a common point. The answer is negative: for unions of convex sets, the direct rainbow analogue fails in general.

Formally, let \(\gamma_{s}(r,d)\) be the least integer \(t\) with the following property. For every collection of pairwise disjoint finite sets \(Y_{0},\ldots,Y_{d}\subseteq\R^{d}\) with \(\abs{Y_{j}}\ge t\), there are pairwise disjoint rainbow sets \(R_{1},\ldots,R_{r}\), meaning \(\abs{R_{i}\cap Y_{j}}=1\) for every \(i\in [r]\) and \(j\in\{0,\ldots,d\}\), such that for every choice of \(s\)-convex sets \(C_{i}\supseteq R_{i}\), \(i\in [r]\), one has \(\bigcap_{i=1}^{r}C_{i}\ne\emptyset\). If no such \(t\) exists, set \(\gamma_{s}(r,d)=\infty\). Our first colored result is the following.

\begin{theorem}\label{thm:rainbow-obstruction}
For all \(r\ge2\), \(d\ge1\), and \(s\ge2\), one has \(\gamma_{s}(r,d)=\infty\).
\end{theorem}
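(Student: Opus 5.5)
The plan is to build, for every $t$, explicit color classes $Y_0,\dots,Y_d$ with $\abs{Y_j}\ge t$ for which every family of pairwise disjoint rainbow sets can be covered by $2$-convex sets with empty total intersection. Two reductions come first. A $2$-convex set is also $s$-convex for $s\ge 2$, so it suffices to treat $s=2$. Given $r\ge 2$ disjoint rainbow sets, put $C_i=\R^d$ for $i\ge 3$. It then suffices to show: \emph{for any two disjoint rainbow sets $R_1,R_2$ there are $2$-convex $C_1\supseteq R_1$ and $C_2\supseteq R_2$ with $C_1\cap C_2=\emptyset$.} Each $R_i$ has only $d+1$ points, so the natural covers are $C_i=\conv(A_i)\cup\conv(B_i)$, where $R_i=A_i\sqcup B_i$ is a split into two groups.

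The key tool is the following general-position principle. If the finite set $Y_0\cup\dots\cup Y_d$ is in general position (no $d+1$ of its points on a common hyperplane), then for disjoint point sets $X,X'$ with $\abs{X}+\abs{X'}\le d+1$ we have $\conv(X)\cap\conv(X')=\emptyset$. Indeed, the union $X\cup X'$ is affinely independent, so any point in both hulls would give two distinct affine representations.

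The case $d$ odd then follows immediately. Split each $R_i$ into halves $A_i,B_i$ of size $(d+1)/2$. All four cross pairs $(A_1,A_2)$, $(A_1,B_2)$, $(B_1,A_2)$, $(B_1,B_2)$ have total size $d+1$, so all four hull intersections are empty and $C_1\cap C_2=\emptyset$. This works for arbitrary classes in general position with any $t$, so $\gamma_s(r,d)=\infty$ for odd $d$. (For $d=1$ it simply says that distinct points can be covered by singletons.)

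The main obstacle is even $d$. A parity count shows that the sizes $\abs{A_1},\abs{B_1},\abs{A_2},\abs{B_2}$ cannot make every cross sum at most $d+1$: that would force $\abs{A_2}=\abs{B_2}=(d+1)/2$. So pure general position is not enough, and the construction must use the color structure. Here I would try two ideas.
\begin{itemize}
\item \textbf{Degenerate pair.} Place $Y_0\cup Y_1$ on a line $\ell$ with all of $Y_0$ far to the left of $Y_1$, and put $Y_2\cup\dots\cup Y_d$ in general position relative to $\ell$. The idea is to treat the pair $\{y_0,y_1\}$ of each $R_i$ as one ``effective'' point, which reduces the count to $d$ effective points, an odd number $d-1$ of points off $\ell$ plus the pair. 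The difficulty is that the segments $[y_0^{(1)},y_1^{(1)}]$ and $[y_0^{(2)},y_1^{(2)}]$ on $\ell$ always overlap. So instead I would cover $y_0$ and $y_1$ by \emph{different} pieces.
\item \textbf{Unequal splits with one repaired pair.} Take sizes $(d/2+1,\,d/2)$ for $R_1$ and $(d/2,\,d/2+1)$ for $R_2$. Then only the pair $(A_1,B_2)$ has size sum $d+2$; the two $(d/2)$-dimensional simplices meet generically in finitely many points. I would force $A_1$ and $B_2$ to contain the $Y_0$- and $Y_1$-points respectively. Then I would place the classes, e.g. $Y_0$ as a tiny cluster pushed far along one direction and $Y_1$ along the opposite direction, so that a fixed hyperplane separates every such $\conv(A_1)$ from every such $\conv(B_2)$. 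This choice must not destroy the emptiness of the other three intersections, which hold by general position.
\end{itemize}

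Verifying that one of these even-$d$ placements works uniformly for all pairs of disjoint rainbow sets, for every $t$, is the step I expect to require the real work.
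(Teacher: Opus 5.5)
Your reductions (to \(s=2\), and to two rainbow sets by taking \(C_i=\R^d\) for \(i\ge 3\)) and your odd-dimensional argument are correct and match the paper. The genuine gap is the even case \(d=2k\). All of the difficulty lies there, and you leave it open.

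Your size pattern, \((k+1,k)\) for \(R_1\) and \((k,k+1)\) for \(R_2\), leaves a single cross pair \((A_1,B_2)\) of total size \(d+2\). This is the right pattern, and the paper uses the same one. But the mechanism you propose for that pair cannot work. No hyperplane chosen independently of \(R_1,R_2\) strictly separates \(\conv(A_1)\) from \(\conv(B_2)\) for all pairs of disjoint rainbow sets, however the classes are placed. To see this, assume every class has at least three points. In each class pick two points lying in a common closed side of the hyperplane, and give one to \(R_1\) and the other to \(R_2\). A class whose \(R_1\)-point lies strictly on one side then has its \(R_2\)-point outside the opposite open side. So at most \(d+1\) points of \(R_1\cup R_2\) can lie on the required open sides, whereas \(\abs{A_1}+\abs{B_2}=d+2\). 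Your collinear-pair idea is not developed far enough to be checked.

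What is missing is a point-dependent argument that controls the Radon partition of that one \((d+2)\)-point set. Suppose every \(d+1\) of \(d+2\) points are affinely independent. Then their affine dependence is unique up to scaling and has all coefficients nonzero, and two complementary subsets have intersecting hulls only if they are exactly its two sign classes. So it suffices to place two points of the same sign in different pieces.

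The paper does this by putting all points on the moment curve, where the sign classes of \(d+2\) points alternate in parameter order. Among the \(2k\) gaps between consecutive parameters of \(R_1\), some gap contains at most one parameter of \(R_2\). The two endpoints of that gap go into \(A_1\) together with \(k-1\) further points, and \(B_2\) is any \(k+1\) points of \(R_2\) outside the gap. Then \(A_1\cup B_2\) is split non-alternatingly, so its hulls are disjoint, and the other three pairs have at most \(d+1\) points.

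Your far-cluster placement can in fact be rescued by the same principle. Put \(Y_0\) near \(M\boldsymbol{e}\) and \(Y_1\) near \(-M\boldsymbol{e}\), with everything else bounded and generic. Consider the normalized affine dependence of \(d+2\) points containing exactly one point of \(Y_0\) and one of \(Y_1\). Its \(\boldsymbol{e}\)-coordinate forces these two coefficients to differ by \(O(1/M)\), and generically their common limit is nonzero. So for large \(M\) the two coefficients have the same sign. Now put the \(Y_0\)-point of \(R_1\) into \(A_1\), the \(Y_1\)-point of \(R_2\) into \(B_2\), the \(Y_1\)-point of \(R_1\) into \(B_1\), and the \(Y_0\)-point of \(R_2\) into \(A_2\). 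This gives \(\conv(A_1)\cap\conv(B_2)=\emptyset\).

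Some such argument has to be supplied. As written, the proposal does not prove the theorem for even \(d\).
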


The obstruction already uses two convex pieces. On the moment curve, any two disjoint \((d+1)\)-point sets can each be split into two parts so that the convex hull of any part from the first set is disjoint from the convex hull of any part from the second. Thus the desired robust intersection fails before any counting argument becomes relevant.

Furthermore, we find that the correct positive statement should keep more of the prescribed classes. Instead of selecting only \(d+1\) points for each final part, we start with many pairwise disjoint \(r\)-point classes \(A_{1},\ldots,A_{m}\), and split every class completely among the \(r\) final parts. In other words, each final part receives exactly one point from every prescribed class.

Let \(g_{r}(d,s_{1},\ldots,s_{r})\) be the least \(m\) with the following property. Whenever \(A_{1},\ldots,A_{m}\subseteq\R^{d}\) are pairwise disjoint sets of size \(r\), there is a labeled partition \(A_{1}\sqcup\cdots\sqcup A_{m}=P_{1}\sqcup\cdots\sqcup P_{r}\) such that \(\abs{P_{i}\cap A_{j}}=1\) for every \(i\in [r]\) and \(j\in [m]\), and such that every choice of \(s_{i}\)-convex sets \(C_{i}\supseteq P_{i}\) has nonempty total intersection.

\begin{theorem}\label{thm:block}
There is an absolute constant \(c\) such that, for all \(r\ge 2\), \(d\ge 1\), and \(s_{1},\ldots,s_{r}\ge 1\),
\[
g_{r}(d,s_{1},\ldots,s_{r})\le cdT_{d,r}(\boldsymbol{s})\log(erT_{d,r}(\boldsymbol{s})).
\]
\end{theorem}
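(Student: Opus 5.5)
We follow the probabilistic strategy behind the Alon--Smorodinsky bound, sharpened by the local Helly-type reduction already used for Theorem~\ref{thm:general-upper}. Given classes \(A_{1},\ldots,A_{m}\), choose for every \(j\) an independent uniformly random bijection \(\pi_{j}:A_{j}\to[r]\), and put \(P_{i}=\{x:\pi_{j}(x)=i \text{ for the } j \text{ with } x\in A_{j}\}\). Then \(\abs{P_{i}\cap A_{j}}=1\) automatically. It remains to show that with positive probability no choice of \(s_{i}\)-convex covers \(C_{i}\supseteq P_{i}\) has empty intersection.

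\textbf{Step 1 (local Helly reduction).} Suppose \(C_{i}=\bigcup_{t\le s_{i}}K_{i,t}\) with \(\bigcap_{i}C_{i}=\emptyset\). Then for every choice of one component per part, \(\bigcap_{i}K_{i,t_{i}}=\emptyset\). By Helly's theorem, each such empty intersection is witnessed by at most \(d+1\) of the sets, so it involves a set \(I\) of at most \(q\) part indices. As in the proof of Theorem~\ref{thm:general-upper}, we extract a ``certificate'' consisting of at most \(T_{d,r}(\boldsymbol{s})\) convex sets, each attached to a part index. The certificate has the following property: every point of \(P_{i}\) lies in some certificate set labelled \(i\), and the certificate sets attached to any transversal choice have empty intersection. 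We will use this lemma exactly as proved earlier, assuming it is stated for an arbitrary labelled point set.

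\textbf{Step 2 (counting certificates via VC-dimension).} Each certificate set can be replaced by its trace on \(P=\bigcup_{j}A_{j}\), a set of \(N=rm\) points. Convex sets have unbounded VC-dimension, so instead we use the trick of Alon--Smorodinsky: replace each convex set by an intersection of half-spaces separating it from the relevant finitely many points, or work with an \(\varepsilon\)-net argument on the family of unions/intersections of at most \(O(T)\) half-spaces. That family has VC-dimension \(O(dT\log T)\), where \(T=T_{d,r}(\boldsymbol{s})\). By Sauer--Shelah, the number of distinct certificate patterns on \(P\) is at most \(N^{O(dT\log(eT))}\).

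\textbf{Step 3 (probability per pattern).} A fixed pattern determines a forbidden labelling structure: for the covers to be valid, each point must receive a label \(i\) such that it lies in a certificate set labelled \(i\). Emptiness of transversal intersections forces that at no single point are all labels admissible; more precisely, for each class \(A_{j}\) we want a constant probability that the random bijection violates the pattern. We expect that one can choose a point \(x\in A_{j}\) and a label missing from the admissible labels at \(x\), so the event fails with probability at least \(1/r\) independently over the \(m\) classes. This gives probability at most \((1-1/r)^{m}\le e^{-m/r}\) per pattern. The union bound then needs \(e^{-m/r}N^{O(dT\log(eT))}<1\). This requires \(m\gtrsim rdT\log(erT)\), which is one factor \(r\) worse than claimed.

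\textbf{Main obstacle.} The hard step is Step 3, and it is where the improvement to \(cdT\log(erT)\) must come from. To remove the factor \(r\), we would aim to show that within each class \(A_{j}\), a random bijection violates the pattern with probability bounded below by an absolute constant rather than \(1/r\). The obstruction is global: \(\bigcap_{i}C_{i}=\emptyset\) means no point of \(\R^{d}\) has all \(r\) labels admissible. A bijection of \(A_{j}\) onto \([r]\) must assign each label \(i\) to a point admissible for \(i\). Viewing this as a bipartite admissibility graph between \(A_{j}\) and \([r]\), the number of perfect matchings is at most a constant fraction of \(r!\). The reason is that some point \(x\) misses a label, and a uniform random bijection sends a label to \(x\) that \(x\) does not admit with probability at least \((\text{number of missing labels at } x)/r\). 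To handle the case where each point misses only one label, we would exploit that different points of \(A_{j}\) miss labels, and use a derangement-type estimate to bound the permanent of the admissibility matrix by \((1-c)r!\). If that fails, we would fall back on choosing the bijections with correlated structure, for instance cyclic shifts of a fixed ordering, which costs only \(\log r\) in the entropy count.
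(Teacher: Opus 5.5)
Your architecture is the paper's: random (equivalently, double-counted) complete-transversal labelings, a record built from at most $T=T_{d,r}(\boldsymbol{s})$ halfspaces via Helly's theorem and simultaneous separation, and a union bound over records. But the proof stops at the one step that distinguishes Theorem~\ref{thm:block}. You only establish a per-class violation probability of $1/r$, which gives the bound with an extra factor $r$. The constant-probability estimate is left as an intention.

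The missing fact is the following. For a fixed record put $L(\boldsymbol{x})=\{i:\boldsymbol{x}\in Q_i\}$. Since $\bigcap_i Q_i=\emptyset$, \emph{every} point of $A_j$, not just some point, has $\abs{L(\boldsymbol{x})}\le r-1$. So every row of the $r\times r$ admissibility matrix $M^{(j)}$ of the class has a zero. Bregman's bound (Lemma~\ref{lem:bregman}) then gives $\operatorname{per}(M^{(j)})\le((r-1)!)^{r/(r-1)}\le r!/2$, where the last step is $(r-1)!\le(r/2)^{r-1}$ by AM--GM. Hence a uniform bijection of $A_j$ is consistent with a fixed record with probability at most $1/2$, independently over classes, so each record has probability at most $2^{-m}$.

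Your stated reason, a single point missing a label, cannot give more than $1/r$. Your fallback also fails. With cyclic shifts, suppose $\boldsymbol{x}_a$ misses only the label $a+k_0 \bmod r$ for each $a$. This is compatible with the only information the record gives. Then every point forbids the same shift $k_0$, and a random shift violates the record with probability only $1/r$.

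There is a second loss, in Step~2. Counting records through the VC-dimension $O(dT\log T)$ of unions or intersections of $O(T)$ halfspaces gives $N^{O(dT\log(eT))}$ patterns. The union bound then yields only $m=O(dT\log(eT)\log(erdT))$, an extra $\log T$ factor; your arithmetic in Step~3 silently drops it.

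Instead, record each of the at most $T$ halfspaces separately by its trace on $P$, as in Lemma~\ref{lem:pattern-count}, together with the Helly witness sets $I(\boldsymbol{a})$ in the full-tuple alternative. This gives at most $\exp(T\log(er))\bigl(\frac{emr}{d+1}\bigr)^{(d+1)T}$ records. Combined with the factor $2^{-m}$, it yields $m\log 2\le T\log(er)+(d+1)T\log\bigl(\frac{emr}{d+1}\bigr)$, hence $m\le cdT\log(erT)$.

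Your alternative of replacing each convex set by halfspaces separating it from the relevant points would not keep the number of halfspaces at $T$. What does is applying simultaneous separation (Lemma~\ref{lem:separation}) once per Helly witness.
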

\medskip
\noindent\textbf{Note added:} The conference version of this draft was submitted in early July 2026. While preparing the journal version, we learned of independent work by Keller and Smorodinsky~\cite{2026coloredVC}, which proves Theorem~\ref{thm:block} from a different point of view. Their proof introduces a colored form of VC-dimension, measuring rainbow partitions of prescribed classes, and bounds it in terms of ordinary VC-dimension. Their result gives a slightly weaker bound, but it holds in separable abstract convexity spaces with given Radon number, and hence in a substantially broader setting than our result in \(\R^{d}\).

\section{Some auxiliary results}\label{sec:auxiliary}

For ease of reference, we collect here the results from the literature that will be used as black boxes. We group them by their role and explain some basics.

\subsection{Planar ingredients}

We first introduce some geometric results for the planar counting argument. A line \(\ell\) is a \emph{supporting line} of a compact convex set \(K\subseteq\R^{2}\) if \(K\) lies in one of the closed halfplanes bounded by \(\ell\) and \(K\cap\ell\ne\emptyset\). Suppose that \(\ell\) supports two disjoint convex polygons \(K_{i}\) and \(K_{j}\) at vertices \(x\) and \(y\), respectively. We call \([x,y]\) a \emph{free supporting segment} if the open segment \([x,y]\setminus\{x,y\}\) meets none of the polygons under consideration. Such segments are \emph{noncrossing} if their open segments are pairwise disjoint. A free supporting segment is \emph{interior} if its supporting line separates its endpoint polygons, and \emph{exterior} otherwise. A noncrossing family is \emph{pointed} if, after adding the polygon boundaries, every vertex is incident with an angle larger than \(\pi\). A maximal pointed noncrossing family of free supporting segments is a \emph{pseudo-triangulation}, its bounded free faces are \emph{pseudotriangles}, whose three corners are called cusp points.

Given a pseudo-triangulation \(\mathcal{E}\), first extend each interior segment within the two pseudotriangles having it on their boundary, as in \cite[Section~3]{PocchiolaVegter1999}. The resulting graph is the \emph{screen graph} of \(\mathcal{E}\). A vertex incident with only one edge has degree one. Next choose an order of the degree-one vertices and treat them one at a time. At each such vertex, continue its incident edge beyond the vertex along the same line until it first meets the current graph, or to infinity if there is no such point. The resulting extended screen graph determines a \emph{supporting-line subdivision} of \(\mathcal{E}\), its cells are the closures of the connected components of its complement.

For polygons, we use throughout the rounding convention described in \cite[Section~9.2]{RoteSantosStreinu2008}. Degree-one vertices that correspond to the same point are kept distinct, each is identified by the interior segment and pseudotriangle from which it arises.

We say that the polygons are in \emph{supporting-line general position} if every common supporting line touches each polygon at a unique vertex, no line supports three polygons, and no three common supporting lines pass through the same point outside the polygons. The next lemma is the geometric fact that we need.

\begin{lemma}[\cite{PocchiolaVegter1999,RoteSantosStreinu2008}]\label{lem:supporting-subdivision}
Let \(K_{1},\ldots,K_{k}\), where \(k\ge3\), be pairwise disjoint convex polygons with nonempty interiors in supporting-line general position. There is a noncrossing family \(\mathcal{E}\) of \(3k-3\) free supporting segments forming a pseudo-triangulation whose endpoint set meets every \(K_i\). Every supporting-line subdivision of \(\mathcal{E}\) has convex cells, with \(K_{1},\ldots,K_{k}\) in distinct cells.
\end{lemma}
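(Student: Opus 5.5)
This lemma is cited from the literature, so I would assemble it from the Pocchiola--Vegter theory of pseudo-triangulations of convex obstacles and the Rote--Santos--Streinu treatment of polygons. I would prove the two assertions separately: the existence of the family \(\mathcal{E}\), and then convexity and separation of the cells of every supporting-line subdivision.

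\textbf{Existence.} Start from any pointed noncrossing family of free supporting segments; the empty family qualifies. Add free supporting segments greedily while the family stays noncrossing and pointed, until it is maximal. The supporting-line general position hypothesis means each common supporting line touches each polygon at one vertex and no line supports three polygons. This makes the bitangents well defined and finite, so the process terminates in a pseudo-triangulation.

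To count segments, apply Euler's formula to the planar map formed by the polygon boundaries and the segments, with each obstacle contracted to a single vertex.
\begin{itemize}
\item Every bounded free face is a pseudotriangle, so it has exactly three cusps.
\item Pointedness means each obstacle contributes exactly one reflex angle at each of its incidences with the free space.
\item Double counting angles against cusps, as in the Pocchiola--Vegter argument, gives \(k-2\) pseudotriangles plus the outer face.
\item Euler's formula then yields \(3k-3\) segments.
\end{itemize}
Every obstacle must be an endpoint of some segment. Otherwise an untouched obstacle would lie inside a pseudotriangle, and a supporting tangent from a cusp to that obstacle could be added, contradicting maximality. Because \(k\ge3\), this tangent is free.

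\textbf{Convexity of cells.} First, inside each pseudotriangle, extend every interior segment. Each extension is a ray continuing a bitangent along a side of a pseudotriangle, and it stops on a concave chain. By the Pocchiola--Vegter screen-graph lemma, these screens cut each pseudotriangle into pieces whose corners are all convex, since each reflex cusp angle gets split.

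Next, process the degree-one vertices in the chosen order, continuing each edge until it hits the current graph or reaches infinity. Each continuation removes the reflex angle at its vertex, and no new reflex angles are created: an edge ending on an existing edge makes angles of at most \(\pi\) on both sides. The hypothesis that no three common supporting lines meet outside the polygons makes these hits transversal, which keeps the degenerate cases manageable.

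After all steps, every cell is a region bounded by line segments and polygon edges with no reflex corner, hence convex. Unbounded cells are treated the same way, with rays to infinity. Each \(K_i\) is a face of the original map. Its boundary chain is a polygon boundary, and the segments touching it only at vertices do not cross it, so \(K_i\) lies in exactly one cell. Two polygons cannot share a cell, because at least one screen or segment separates them: \(K_i\) and \(K_j\) lie on opposite sides of the bitangent chain through their pseudotriangles.

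\textbf{Main obstacle.} The main obstacle is the independence from the order of extending the degree-one vertices, together with the rounding convention for polygons, where several degree-one vertices coincide. I would handle this by a local invariant: after each extension, every vertex of the current graph has all incident angles at most \(\pi\) except at pending degree-one vertices. I would then follow Rote--Santos--Streinu, Section~9.2, treating coincident degree-one vertices as infinitesimally perturbed copies.
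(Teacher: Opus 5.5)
Your count of segments is wrong. A pseudo-triangulation of $k$ pairwise disjoint convex obstacles has $2k-2$ pseudotriangles, not $k-2$. The numbers $k-2$ and $2k-3$ are the ones for pointed pseudo-triangulations of $k$ \emph{points}. Your own chain shows the inconsistency: Euler's formula with the $k$ contracted obstacles as vertices and $(k-2)+1$ faces gives $k+(k-1)-2=2k-3$ edges, not $3k-3$.

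The correct double count is over cusps:
\begin{itemize}
\item Each endpoint of a segment of $\mathcal{E}$ creates exactly one cusp, in the free face on the same side of its supporting line as the polygon. When several segments share a polygon vertex, this needs the rounding convention of Rote--Santos--Streinu.
\item The unbounded face, bounded by the hull segments, receives no cusp.
\item Every bounded face has exactly three cusps.
\end{itemize}
Hence $3F_{b}=2\abs{\mathcal{E}}$. Euler's formula for the connected contracted plane multigraph gives $F_{b}=\abs{\mathcal{E}}-k+1$. Together these yield $\abs{\mathcal{E}}=3k-3$ and $F_{b}=2k-2$.

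Your argument that every $K_i$ is touched also fails. A segment from a cusp tangent to the untouched polygon need not support the polygon at the cusp. It also need not be free or keep the family pointed, and $k\ge3$ plays no role in any of this. The correct reason is simpler. The hull segments cross nothing, so they belong to every maximal family, and every polygon on the hull boundary is touched by one of them. Any other polygon lies inside the hull. If untouched, it would sit in the interior of a bounded free face, which by maximality is a pseudotriangle and hence simply connected.

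The convexity and separation parts mostly restate the cited theorem, and the mechanism you describe is off. The cusps of a pseudotriangle are its sharp corners, not reflex angles. The corners that the screens and the degree-one extensions must straighten are the segment endpoints on the polygons.

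Your invariant, that all angles are at most $\pi$ except at pending degree-one vertices, would give convex cells once it holds, since a closed connected locally convex set is convex. But its base case for the screen graph is precisely the content of Pocchiola--Vegter that you quote rather than prove. This includes vertices where several segments meet one polygon vertex.

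Your argument that $K_i$ lies in a single cell considers only the segments of $\mathcal{E}$. You never show that a screen or an extension meets the current graph before it can run into another polygon. Finally, ``some screen or segment separates them'' restates the claim that $K_1,\ldots,K_k$ lie in distinct cells; it does not prove it.

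The paper itself imports this lemma from Pocchiola--Vegter and Rote--Santos--Streinu without proof, so relying on those sources is legitimate. If you do, either drop the reconstructed steps or correct them as above.
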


The screen graph is fixed by \(\mathcal{E}\), but the second step may give different supporting-line subdivisions. The last assertion of the lemma holds for every order chosen in that step.

A \emph{plane straight-line graph} is a graph drawn with straight edges so that two edges meet only at a common endpoint. The segments in \(\mathcal{E}\) form such a graph. For each segment, we will also record which side of its supporting line contains the polygon at each endpoint.

\begin{lemma}[\cite{RoteSantosStreinu2008}]\label{lem:plane-graph-count}
There is an absolute constant \(c_{\mathrm{pg}}>0\) such that, for every \(m\)-point set \(S\subseteq\R^{2}\) with no three collinear points, the number of plane straight-line graphs with vertex set contained in \(S\) is at most \(c_{\mathrm{pg}}^{m}\).
\end{lemma}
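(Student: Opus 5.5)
We reduce from plane graphs to triangulations and then bound the number of triangulations. Every plane straight-line graph \(G\) with vertex set contained in \(S\) can be extended, by adding straight edges one at a time as long as no crossing is created, to a maximal plane straight-line graph on \(S\). Because no three points of \(S\) are collinear, such a maximal graph is a triangulation of \(S\): its bounded faces are triangles and its outer boundary is the convex hull. Isolated points of \(S\) are allowed in \(G\), so we regard \(G\) simply as an edge set on \(S\); this loses nothing.

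Fix one triangulation \(\mathcal{T}\). By Euler's formula it has at most \(3m-6\) edges, so at most \(2^{3m}\) edge sets of plane graphs are contained in \(\mathcal{T}\). Hence
\[
\#\{\text{plane graphs on } S\}\le 2^{3m}\cdot \mathrm{tr}(S),
\]
where \(\mathrm{tr}(S)\) denotes the number of triangulations of \(S\). It therefore suffices to prove \(\mathrm{tr}(S)\le c^{m}\) for an absolute constant \(c\); then we may take \(c_{\mathrm{pg}}=8c\).

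The bound \(\mathrm{tr}(S)\le c^{m}\) is the main obstacle, and it is the classical theorem of Ajtai, Chv\'atal, Newborn and Szemer\'edi; a later explicit constant is \(30^{m}\), due to Sharir and Sheffer. If a self-contained route is wanted, I would first try the Santos--Seidel argument, which encodes each triangulation by a sweep. Order the points of \(S\) by \(x\)-coordinate. Sweep a vertical line across the triangulation, and at each step record a bounded amount of local information: which triangle is attached next to the current monotone frontier path, and whether that step advances to a new point. The aim is to show that this record determines the triangulation uniquely. The total length of the encoding is \(O(m)\), since each step either introduces a new vertex or adds one of the \(O(m)\) triangles. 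The number of choices at each step is bounded by an absolute constant. Together these give an exponential bound, with the key point being that the frontier is determined by the earlier choices.

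Alternatively, one could use the original ACNS method. Charge each edge to a bounded number of configurations, and apply the crossing lemma to show that the graph of all edges used by any triangulation cannot force more than exponentially many maximal noncrossing subsets. This route is more delicate, so I would use the sweep encoding and cite the known bound for the constant.
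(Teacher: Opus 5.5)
Your proof is correct. It is the standard argument behind this lemma, which the paper does not prove but imports as a black box from Rote--Santos--Streinu. The argument runs: every plane straight-line graph on \(S\) lies in some triangulation, each triangulation has at most \(2^{3m}\) edge subsets, and triangulations number at most \(c^{m}\) by Ajtai--Chv\'atal--Newborn--Szemer\'edi or Sharir--Sheffer.

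Two small remarks. First, if graphs are distinguished by their vertex sets as well as their edges, identifying a graph with its edge set does not ``lose nothing''; it costs an extra factor of at most \(2^{m}\), which is harmless. Second, your sweep sketch would not stand alone: naming which triangle is attached along a frontier of length up to \(m\) is not a bounded choice without a canonical leftmost-advance rule and an amortized encoding of positions. Since you cite the triangulation bound, the sketch is not needed.
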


A \emph{fence} is the common polygonal boundary of two closed regions with disjoint interiors that together cover the plane. We say that it separates disjoint sets \(R\) and \(B\) if one region contains \(R\) and the other contains \(B\). The length of a fence is the sum of the lengths of its edges. The following existence theorem is the only external result used in our fence argument.

\begin{lemma}[\cite{AbrahamsenGiannopoulosLofflerRote2020}]\label{lem:shortest-fence}
Let \(R\) and \(B\) be disjoint compact polygonal sets in the plane. Among all fences that place \(R\) and \(B\) in different regions, there is one of minimum length.
\end{lemma}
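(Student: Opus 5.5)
The plan is to reduce the existence question to a minimization over a finite family of candidate fences, using a tautening argument. Throughout, let \(V\) denote the finite set of vertices of the polygonal sets \(R\) and \(B\).

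\emph{Step 1: a fence exists and fences can be confined to a box.} Since \(R\) and \(B\) are disjoint and compact, they are at positive distance \(\delta>0\). Take a polygon approximating the \(\delta/2\)-neighbourhood of \(R\); its boundary is a fence of some finite length \(L_{0}\), so the infimum \(L^{*}\) of fence lengths is finite. Next fix a large box \(Q\) containing \(R\cup B\) in its interior. Every fence can be projected onto \(Q\) without increasing its length: the nearest-point projection onto a convex set is \(1\)-Lipschitz and fixes \(R\cup B\), and the region structure survives after intersecting the two regions with \(Q\) and letting the unbounded one absorb \(\R^{2}\setminus Q\). We may also discard every component of a fence that separates nothing from \(R\cup B\). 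Hence it suffices to consider fences that lie in \(Q\), have length at most \(L_{0}\), and are finite unions of closed polygonal curves, each of which winds around some points of \(R\cup B\).

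\emph{Step 2: tautening.} Take such a fence \(F\) and one of its closed curves \(\sigma\). I would replace \(\sigma\) by a shortest closed curve \(\sigma'\) that is homotopic to \(\sigma\) in \(\R^{2}\setminus(R\cup B)\). The intended justification is the standard theory of shortest paths among polygonal obstacles: a locally shortest curve in this domain is a polygonal chain whose bends occur only at points of \(V\), touching the obstacles from outside. This step does not increase length. The region containing \(R\) should still contain \(R\), and the other should still contain \(B\), because homotopy in the complement preserves winding numbers around every point of \(R\cup B\). We then redefine the two regions by the parity of winding numbers.

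\emph{Step 3: finiteness and minimization.} After Step 2, every candidate fence is a union of closed polygonal chains with vertices in \(V\). Every edge has length at least the minimum distance \(\eta>0\) between distinct points of \(V\), so a fence of length at most \(L_{0}\) has at most \(L_{0}/\eta\) edges. There are therefore only finitely many candidates, and the one of least length is a shortest fence.

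\emph{Main obstacle.} The hardest part is Step 2. The tautened curves may run along obstacle boundaries, overlap each other, or pass through vertices of \(V\) several times. So one must check that the result is still the common boundary of two closed regions with disjoint interiors that cover the plane. My first attempt would be to define the two regions by the parity of winding numbers of the union of the tautened curves, and then to argue that any doubly traversed edge can be deleted. Such a deletion shortens the fence and leaves every parity unchanged, since the doubled edge contributes an even amount to each winding number. Once all such edges are removed, the remaining chains should form the boundary of the parity region, as required.
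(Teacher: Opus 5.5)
\textbf{Gap in Step~2.} The paper gives no proof of this statement. It imports it as a black box from Abrahamsen, Giannopoulos, L\"offler and Rote. Your argument is therefore a genuinely different, self-contained route, and its skeleton is sound: decompose a fence into closed curves, shorten each in its homotopy class, recombine by parity, and use that taut chains have vertices in $V$ to reduce to finitely many candidates.

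Step~2, however, has a real gap. A fence may touch $\partial R\cup\partial B$, so the curves $\sigma$ need not lie in $\R^{2}\setminus(R\cup B)$. A nontrivial taut curve $\sigma'$ never does: it bends at points of $V\subseteq R\cup B$ and may run along edges of $R$ and $B$. So your claim that winding numbers are preserved ``around every point of $R\cup B$'' fails exactly at the points of $R\cup B$ met by $\sigma'$, where the winding number is undefined. Your deletion of doubled edges may then put such points in the interior of the wrong region.

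Nothing in your argument uses that $R$ and $B$ are full-dimensional, and without this the statement is false. Take single points $R=\{p\}$ and $B=\{b\}$. Boundaries of small triangles around $p$ are fences of arbitrarily small length, while every fence has positive length, so no shortest fence exists. Your procedure shows the failure concretely. It tautens a loop around $p$ to the constant loop at $p$. It tautens a loop around a segment to the segment traversed twice, which your deletion step then erases.

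\textbf{How to repair it.} Work throughout in $\Omega=\R^{2}\setminus\operatorname{int}(R\cup B)$.
\begin{enumerate}[(i)]
\item Every fence lies in $\Omega$: a point of $\operatorname{int}R$ is an interior point of the region $U\supseteq R$, so it is not on $\partial U$, and likewise for $B$.
\item Take $\sigma'$ to be a shortest loop freely homotopic to $\sigma$ in $\Omega$. It exists by Arzel\`a--Ascoli inside a large box, since uniformly close loops in a compact polyhedron are homotopic. Assert invariance of winding numbers only around points of $\operatorname{int}(R\cup B)$.
\item Recombine by the mod-$2$ sum, not only by deleting doubled edges. Subdivide at crossings and collinear overlaps, and keep exactly the pieces covered an odd number of times. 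The resulting set $F''$ is the common boundary of the closures of the two parity classes.
\item A point of $R$ not on $F''$ has a small disk missing $F''$, on which the parity is constant. This disk meets $\operatorname{int}R$ because $R$ is the closure of its interior, so the point lies in the correct region. Points of $R$ on $F''$ lie in both closed regions anyway.
\end{enumerate}
Step (iv) is where full-dimensionality enters. It holds in the application in Lemma~\ref{lem:planar-refinement}, where $R$ and $B$ are unions of convex polygons with nonempty interiors.

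Finally, apply the edge count of Step~3 to the taut chains before the mod-$2$ reduction, since crossings create vertices outside $V$. With these changes the box projection in Step~1 is unnecessary, and the argument goes through.
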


The lower bound in Theorem~\ref{thm:planar}, already mentioned in the introduction, follows from the next construction. We state its stronger form because it will also start the proof of Theorem~\ref{thm:power-lower}.

\begin{lemma}[\cite{ChenGeShuWangXu2025}]\label{lem:planar-bad-set}
For every \(s\ge1\), there is a set \(X\subseteq\R^{2}\) of \(s^{2}\) points with the following property. For every map \(\chi:X\to[2]\), each set \(\chi^{-1}(i)\) can be written as \(X_{i,1}\sqcup\cdots\sqcup X_{i,s}\) so that
\[
\conv(X_{1,a})\cap\conv(X_{2,b})=\emptyset
\]
for all \(a,b\in[s]\).
\end{lemma}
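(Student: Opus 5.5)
The plan is an explicit two-level (``stretched'') construction together with a rule that, given any coloring \(\chi\), chooses the parts \(X_{i,a}\) adaptively from \(\chi\). By a small perturbation we may assume the points are in general position, so disjointness of convex hulls is an open condition. We index the points as \(x_{a,b}\), \(a,b\in[s]\), and place them in \(s\) tiny clusters \(G_{1},\ldots,G_{s}\), where \(G_{a}=\{x_{a,b}:b\in[s]\}\). The cluster centres lie on a large convex arc, and inside each cluster the \(s\) points lie on a much smaller convex arc, ordered by \(b\). Following the Erd\H{o}s--Szekeres and stretched-grid philosophy, we choose the scales so that the order type is forced. In particular, for \(a\ne a'\), every segment joining two points of \(G_{a}\) is essentially a point when seen from \(G_{a'}\). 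So whether two hulls meet is decided by a combinatorial interleaving pattern along the two nested arcs.

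The first step is to reduce disjointness of hulls to a combinatorial statement, in the spirit of the convex-position case. There, two vertex subsets have disjoint hulls if and only if they do not interleave along the circle. In the two-level configuration the analogous criterion should read as follows. A set \(Y\) touching several clusters behaves, at the coarse scale, like the set of clusters it meets. Inside a single cluster it behaves like its trace on that arc. So \(\conv(Y)\cap\conv(Z)=\emptyset\) should hold precisely when the cluster sets of \(Y\) and \(Z\) do not interleave on the big arc and, in every cluster the two sets share, their traces do not interleave on the small arc. I would prove this criterion once, by a direct separating-line argument at both scales, and afterwards work purely combinatorially.

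The second step is the partition rule, and it has to adapt to \(\chi\). The convex-position analysis (via Kreweras complements) shows that a single convex chain of \(s^{2}\) points cannot work. The reason is that the alternating coloring forces about \(s^{2}/2\) monochromatic noncrossing blocks. The two levels must therefore be used to trade off: long monochromatic runs should be absorbed inside clusters, while whole clusters are grouped across the big arc. Concretely, in each cluster I would keep one color as a single ``cluster part''. I would break the other color into its runs only when the number of runs is small, and otherwise assign the cluster wholesale to cross-cluster parts of the other color. I would then count, with a charging argument, that each color uses at most \(s\) parts in total.

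The main obstacle is exactly this budget count together with the noncrossing requirement. Geometrically, whenever a red part and a blue part both span across each other's position, their hulls cross, so the assignment must be genuinely noncrossing at both scales simultaneously. To handle this, I would first search for the right potential: for example, charge each part to a distinct cluster index \(a\) for one color and to a distinct in-cluster index \(b\) for the other. If that fails, I would modify the inner configuration, for instance by making clusters alternate orientation along the big arc, so that the needed noncrossing choice always exists.
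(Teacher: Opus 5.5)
As it stands this is a plan rather than a proof. The orientation of the small arcs is left open (you even consider alternating it), and the adaptive partition rule and the charging count meant to keep each color within $s$ parts are never specified. Moreover, the one precise intermediate statement, your two-scale non-interleaving criterion, is false. It decides what happens inside a cluster $G_{a}$ without asking whether $Y$ or $Z$ also meets other clusters, and that is decisive. Seen from inside the tiny cluster $G_{a}$, a set that meets other clusters behaves like its trace on $G_{a}$ plus an extra far-away point in the direction of the rest of the configuration. For example, take $y$ in another cluster and a flat small arc roughly parallel to the big arc. If the arc of $G_{a}$ bulges toward the other clusters, then $x_{a,2}\in\conv\{x_{a,1},x_{a,3},y\}$. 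If it bulges away from them, the segment $[x_{a,2},y]$ crosses $[x_{a,1},x_{a,3}]$. In both cases one of the two sets lies in the single cluster $G_{a}$ and one of the two traces on $G_{a}$ is a single point. So nothing interleaves, and your criterion wrongly predicts disjoint hulls. The purely combinatorial stage you plan to run afterwards therefore has no valid basis.

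What is missing is the right orientation. Once it is fixed, no adaptivity or charging is needed, and the rule you mention only as a first guess (one color grouped by cluster, the other by in-cluster index) works verbatim. Put the centres at distinct vertices $v_{1},\ldots,v_{s}$ of a convex polygon and set $x_{a,b}=v_{a}-\varepsilon u_{a,b}$. Here $u_{a,1},\ldots,u_{a,s}$ are distinct unit vectors in the interior of the outer normal cone of the polygon at $v_{a}$, so each $G_{a}$ is a tiny arc bulging toward the interior of the polygon. Since $\langle u_{a,b},v_{a'}-v_{a}\rangle<0$ for $a'\ne a$, for small $\varepsilon$ and small $\eta>0$ the halfplane $\{z:\langle u_{a,b},z\rangle\le\langle u_{a,b},x_{a,b}\rangle+\eta\}$ contains $x_{a,b}$ and all other clusters but no other point of $G_{a}$. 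A parallel halfplane contains all other clusters and no point of $G_{a}$. Given $\chi$, let $R_{b}$ be the red points among $x_{1,b},\ldots,x_{s,b}$ and $B_{a}$ the blue points of $G_{a}$. Now $R_{b}\cap G_{a}\subseteq\{x_{a,b}\}$, and $x_{a,b}\in R_{b}$ only when it is red. Hence one of the two halfplanes contains $R_{b}$ while its open complement contains $B_{a}$, giving $\conv(R_{b})\cap\conv(B_{a})=\emptyset$ for all $a,b$. The paper itself only cites Lemma~\ref{lem:planar-bad-set} and gives no proof. However, this argument is exactly the mechanism of Lemma~\ref{lem:two-part-amplification} with $\ell=2$ and $t=s$, applied to $s$ points on the branch $\{(x,1/x):x>0\}$, each of which is cut off by a halfplane with positive normal vector.
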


\subsection{Convexity and counting tools}

For the rest of the paper, a \emph{closed halfspace} means either \(\R^{d}\) or a set of the form \(\{\boldsymbol{x}\in\R^{d}:\langle\boldsymbol{u},\boldsymbol{x}\rangle\le a\}\) with \(\boldsymbol{u}\ne\boldsymbol{0}\). A \emph{convex polyhedron} is an intersection of finitely many closed halfspaces, where the intersection of an empty family of halfspaces is interpreted as \(\R^{d}\).

We shall repeatedly separate compact convex sets from closed convex sets.

\begin{lemma}[\cite{Rockafellar1970}]\label{lem:strict-separation}
Let \(K\) be a compact convex set and \(L\) be a closed convex set in a finite-dimensional Euclidean space. If \(K\cap L=\emptyset\), then some affine hyperplane strictly separates \(K\) and \(L\).
\end{lemma}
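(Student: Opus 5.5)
This statement is the standard strict separation theorem, cited from Rockafellar, so the proof is the classical nearest-point argument. I would not invoke any of the combinatorial lemmas above.

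\emph{Step 1: a closest pair.} Consider the function \(\phi(x)=\operatorname{dist}(x,L)=\inf_{y\in L}\|x-y\|\) on \(K\). It is \(1\)-Lipschitz, hence continuous, so it attains its minimum on the compact set \(K\) at some point \(x_{0}\). Because \(L\) is closed and nonempty, the infimum defining \(\phi(x_{0})\) is attained at some \(y_{0}\in L\). To see this, intersect \(L\) with a large closed ball around \(x_{0}\) and use compactness. The pair \((x_{0},y_{0})\) then minimizes \(\|x-y\|\) over \(K\times L\). Since \(K\cap L=\emptyset\), we have \(\boldsymbol{u}:=x_{0}-y_{0}\ne\boldsymbol{0}\) and \(\delta:=\|\boldsymbol{u}\|>0\). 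This step is the only place where compactness of \(K\) is used, and it is the main point to get right. With two merely closed sets the minimum distance could be \(0\) without being attained.

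\emph{Step 2: variational inequalities.} Fix \(y\in L\) and \(t\in[0,1]\). By convexity, \(y_{0}+t(y-y_{0})\in L\), so
\[
\|x_{0}-y_{0}-t(y-y_{0})\|^{2}\ge\|x_{0}-y_{0}\|^{2}.
\]
Expanding and letting \(t\to0^{+}\) gives \(\langle\boldsymbol{u},y-y_{0}\rangle\le0\), that is, \(\langle\boldsymbol{u},y\rangle\le\langle\boldsymbol{u},y_{0}\rangle\) for all \(y\in L\). The same argument with \(x_{0}+t(x-x_{0})\in K\) gives \(\langle\boldsymbol{u},x\rangle\ge\langle\boldsymbol{u},x_{0}\rangle\) for all \(x\in K\).

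\emph{Step 3: the separating hyperplane.} Note that \(\langle\boldsymbol{u},x_{0}\rangle-\langle\boldsymbol{u},y_{0}\rangle=\delta^{2}>0\). Set \(a=\langle\boldsymbol{u},(x_{0}+y_{0})/2\rangle\). Then
\[
\langle\boldsymbol{u},y\rangle\le a-\delta^{2}/2<a<a+\delta^{2}/2\le\langle\boldsymbol{u},x\rangle
\]
for all \(y\in L\) and \(x\in K\). Hence the hyperplane \(\{\langle\boldsymbol{u},\cdot\rangle=a\}\) strictly separates \(K\) and \(L\), with a positive gap on both sides.

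The degenerate cases are immediate. If \(K\) or \(L\) is empty, any hyperplane that puts the nonempty set strictly on one side works. For a compact set this is clear. For a nonempty closed convex \(L\) with \(K=\emptyset\), take a hyperplane disjoint from \(L\) if \(L\ne\R^{d}\), and any hyperplane otherwise, since the statement then only needs \(K\) on one side vacuously.
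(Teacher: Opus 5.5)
The paper gives no proof of this lemma. It is imported as a black box from Rockafellar and is only ever applied to nonempty compact sets: the two block hulls in the case $k=2$ of Lemma~\ref{lem:planar-count}, and the pairs $K_i,L_j$ in Lemma~\ref{lem:planar-refinement}.

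Your nearest-point argument is a correct, self-contained proof of the version the paper needs:
\begin{itemize}
\item Compactness of $K$, together with closedness of $L$, yields a minimizing pair $(x_0,y_0)$.
\item The first-order inequalities obtained from convexity give $\langle u,y\rangle\le\langle u,y_0\rangle$ on $L$ and $\langle u,x\rangle\ge\langle u,x_0\rangle$ on $K$.
\item The midpoint hyperplane then separates $K$ and $L$ with a gap of $\delta^{2}/2$ on each side.
\end{itemize}
Rockafellar's route is different: he characterizes strong separation by positivity of the distance between the sets, which rests on the general proper-separation theorem. Your projection argument is more elementary. It also produces an explicit normal $u=x_0-y_0$ and a quantitative gap, which is exactly what the rescaling to $\ell_{i,j}\ge 2$ on $K_i$ and $\ell_{i,j}\le -2$ on $L_j$ in Lemma~\ref{lem:planar-refinement} uses.

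One correction concerns your final paragraph on degenerate cases. When $K=\emptyset$ and $L=\R^{d}$, no hyperplane works. Strict separation also requires $L$ to lie in an open halfspace, which $\R^{d}$ never does, so your claim that ``any hyperplane'' works is false there. The statement should simply be read, as in Rockafellar, for nonempty $K$ and $L$, and that is the only case the paper uses. Your handling of $K=\emptyset$, $L\ne\R^{d}$ is fine, but say why a hyperplane avoiding $L$ exists: apply your main argument to $\{p\}$ and $L$ for some $p\notin L$.
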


We use Helly's theorem in its usual finite form.

\begin{lemma}[\cite{Helly1923}]\label{lem:helly}
If a finite family of convex sets in \(\R^{d}\) has empty intersection, then some subfamily of size at most \(d+1\) has empty intersection.
\end{lemma}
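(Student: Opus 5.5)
We prove the contrapositive: if every subfamily of size at most \(d+1\) of a finite family \(C_{1},\ldots,C_{n}\) of convex sets in \(\R^{d}\) has nonempty intersection, then \(\bigcap_{k=1}^{n}C_{k}\ne\emptyset\). The argument is induction on \(n\), with Radon's theorem (recalled in the introduction) as the only geometric input.

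\textbf{Base case.} If \(n\le d+1\), the whole family is itself one of the subfamilies of size at most \(d+1\), so there is nothing to prove.

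\textbf{Induction step.} Let \(n\ge d+2\) and assume the claim holds for families of \(n-1\) sets.
\begin{enumerate}[(i)]
\item Every subfamily of \(\{C_{k}:k\ne i\}\) of size at most \(d+1\) is also such a subfamily of the original family, so it has nonempty intersection. By the induction hypothesis, for each \(i\in[n]\) we may choose a point \(x_{i}\in\bigcap_{k\ne i}C_{k}\).
\item If two indices \(i\ne j\) give the same point \(x_{i}=x_{j}\), then this point lies in \(C_{k}\) for every \(k\ne i\) and for every \(k\ne j\). Since \(i\ne j\), that covers all \(k\), so it lies in all \(n\) sets and we are done.
\item Otherwise \(x_{1},\ldots,x_{n}\) are \(n\ge d+2\) distinct points. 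Radon's theorem gives a partition \([n]=I\sqcup J\) and a point
\[
p\in\conv\{x_{i}:i\in I\}\cap\conv\{x_{j}:j\in J\}.
\]
\item Fix \(k\in[n]\); we show \(p\in C_{k}\). If \(k\notin I\), then every \(x_{i}\) with \(i\in I\) has \(i\ne k\), so \(x_{i}\in C_{k}\). Since \(C_{k}\) is convex, \(p\in\conv\{x_{i}:i\in I\}\subseteq C_{k}\). If \(k\in I\), then \(k\notin J\), and the same reasoning applied to \(J\) gives \(p\in C_{k}\).
\end{enumerate}
Thus \(p\in\bigcap_{k=1}^{n}C_{k}\), which completes the induction.

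There is no real obstacle here. The only points needing care are step (ii), the degenerate case where the chosen points coincide, and the trivial base case.
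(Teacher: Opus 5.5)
Your proof is correct. It is the classical deduction of Helly's theorem from Radon's theorem.

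The paper gives no proof of this lemma. It imports Helly's theorem as a black box from \cite{Helly1923}, just as it imports Radon's theorem (Lemma~\ref{lem:radon}), so there is no argument of the paper's to compare against. Your route costs a few lines but makes Lemma~\ref{lem:helly} a consequence of a tool the paper already states. There is no circularity, since Radon's theorem needs only an affine dependence among $d+2$ points.

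One step should be tightened. Lemma~\ref{lem:radon} is stated for exactly $d+2$ points, so when $n>d+2$ you should apply it to $x_1,\ldots,x_{d+2}$ and obtain disjoint nonempty $I,J\subseteq[d+2]$. Step (iv) only uses $I\cap J=\emptyset$, not $I\sqcup J=[n]$, so it goes through verbatim. Alternatively, add the remaining indices to $I$, which only enlarges $\conv\{x_i:i\in I\}$.

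Your case (ii) correctly disposes of coincident points. Some such treatment is needed, because Lemma~\ref{lem:radon} is about a set of $d+2$ distinct points.

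Finally, the argument uses no closedness or boundedness and works for indexed families with repetitions. It therefore covers every application in the paper, e.g.\ in Lemma~\ref{lem:polyhedral-reduction}. There the subfamily you produce is automatically nonempty, because the empty subfamily has intersection $\R^{d}$.
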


We also use the following well-known simultaneous separation statement. It is a standard consequence of Helly's theorem, we take advantage of the precise form needed below and refer to Gale and Klee~\cite{GaleKlee1959}.

\begin{lemma}[\cite{GaleKlee1959}]\label{lem:separation}
Let \(D_{1},\ldots,D_{t}\) be nonempty compact convex sets in \(\R^{d}\) with \(\bigcap_{i=1}^{t}D_{i}=\emptyset\). Then there are closed halfspaces \(H_{i}\supseteq D_{i}\), \(i\in [t]\), such that \(\bigcap_{i=1}^{t}H_{i}=\emptyset\).
\end{lemma}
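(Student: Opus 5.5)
We prove Lemma~\ref{lem:separation} by induction on \(t\). We may assume no point is common to all the \(D_i\), and each step replaces one compact convex set \(D_i\) by a closed halfspace \(H_i\supseteq D_i\) while keeping the total intersection empty. For \(t=1\) the hypothesis \(D_{1}=\emptyset\) contradicts nonemptiness, so we may take \(t\ge2\). The case \(t=2\) is Lemma~\ref{lem:strict-separation}: a strictly separating hyperplane gives two closed halfspaces, one containing \(D_1\) and one containing \(D_2\), with empty intersection.

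For general \(t\), we proceed as follows. Suppose we have already replaced \(D_{1},\ldots,D_{j-1}\) by halfspaces \(H_{1},\ldots,H_{j-1}\), and that
\[
H_{1}\cap\cdots\cap H_{j-1}\cap D_{j}\cap\cdots\cap D_{t}=\emptyset .
\]
Put \(L=H_{1}\cap\cdots\cap H_{j-1}\cap D_{j+1}\cap\cdots\cap D_{t}\). This is a closed convex set, since halfspaces are closed and compact sets are closed. Moreover \(L\cap D_j=\emptyset\).

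If \(L=\emptyset\), we set \(H_j=\R^d\). This is allowed by the paper's convention that \(\R^{d}\) counts as a closed halfspace. If \(L\ne\emptyset\), Lemma~\ref{lem:strict-separation}, applied to the compact set \(D_j\) and the closed set \(L\), gives a hyperplane strictly separating them. We let \(H_j\) be the closed side containing \(D_j\); it misses \(L\). In either case
\[
H_{1}\cap\cdots\cap H_{j}\cap D_{j+1}\cap\cdots\cap D_{t}=\emptyset ,
\]
so the invariant is preserved. After \(t\) steps every \(D_i\) has been replaced and \(\bigcap_{i=1}^{t} H_i=\emptyset\).

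The only point needing care is that Lemma~\ref{lem:strict-separation} requires one of the two sets to be compact. Once some \(D_i\) have been replaced by halfspaces, the set \(L\) may be unbounded. This is harmless because we always separate the compact set \(D_j\), which has not yet been replaced, from the closed set \(L\). Nonemptiness of \(D_j\) is not even needed for this step, and Helly's theorem (Lemma~\ref{lem:helly}) is not needed either.

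If one wants \(H_i\ne\R^d\), one can replace any \(H_i=\R^{d}\) by a halfspace containing \(D_i\) only at the end, provided emptiness still holds. That refinement is not required by the statement, so the sequential strict-separation argument suffices.
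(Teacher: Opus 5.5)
Your argument is correct. It takes a different route from the paper, which does not prove this lemma at all: it quotes the statement from Gale and Klee and only remarks that it is a standard consequence of Helly's theorem.

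You replace the sets one at a time under the invariant \(H_{1}\cap\cdots\cap H_{j-1}\cap D_{j}\cap\cdots\cap D_{t}=\emptyset\). The point that makes this work is the one you isolate. At step \(j\) the set being replaced is always the still-compact \(D_{j}\), while everything possibly unbounded is absorbed into the closed convex set \(L\). So Lemma~\ref{lem:strict-separation} applies at every step.

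Your route gives a self-contained proof from a tool already listed in the paper, and it shows that Helly's theorem is not needed for this lemma. In the paper, Helly enters separately, in Lemma~\ref{lem:polyhedral-reduction}, to cut the index set down to at most \(q\) (or \(d+1\)) parts before separation is applied. That reduction is what controls the number of halfspace occurrences. Your construction may output \(H_{j}=\R^{d}\), which is harmless there, since such a factor contributes no occurrence and \(P\cap\R^{d}=P\) is already counted in \(\tau_{d}(P)\). The paper's citation simply buys brevity by deferring to the literature.

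Two side remarks in your write-up are slightly off, though neither affects the proof.
\begin{enumerate}
\item The caveat ``provided emptiness still holds'' is vacuous. Replacing a factor \(\R^{d}\) by a smaller set cannot make an empty intersection nonempty. In fact, when \(L=\emptyset\) you may take a proper halfspace containing \(D_{j}\) from the start.
\item The claim that nonemptiness of \(D_{j}\) is not needed is not right as stated. Nonemptiness of \(D_{j}\), together with \(D_{j}\cap L=\emptyset\), is what rules out \(L=\R^{d}\), and without it the step can fail. For example, with \(t=2\) and \(D_{1}=D_{2}=\emptyset\), your procedure sets \(H_{1}=\R^{d}\), and then no closed halfspace \(H_{2}\) misses \(L=\R^{d}\).
\end{enumerate}
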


The VC-dimension argument later in the paper uses Radon's theorem and the Sauer--Shelah lemma in the following forms. In particular, Radon's theorem implies that the traces of closed halfspaces on a finite set have VC-dimension at most \(d+1\): a shattered set of \(d+2\) points would allow one class of a Radon partition to be separated from the other, contradicting the intersection of their convex hulls.

\begin{lemma}[\cite{Radon1921}]\label{lem:radon}
Every set of \(d+2\) points in \(\R^{d}\) can be partitioned into two nonempty parts whose convex hulls intersect.
\end{lemma}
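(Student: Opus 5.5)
We prove Radon's theorem by the standard affine-dependence argument. Let $x_1,\ldots,x_{d+2}\in\R^{d}$ be the given points; if two coincide, split them into singletons and put the remaining points anywhere. Otherwise consider the $d+1$ linear equations in the unknowns $\lambda_1,\ldots,\lambda_{d+2}$:
\[
\sum_{j=1}^{d+2}\lambda_j x_j=\boldsymbol{0},\qquad \sum_{j=1}^{d+2}\lambda_j=0 .
\]
There are $d+1$ scalar equations ($d$ coordinates plus the sum condition) and $d+2$ unknowns. Hence there is a nontrivial solution $\lambda\ne\boldsymbol{0}$.

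Let $I=\{j:\lambda_j>0\}$ and $J=\{j:\lambda_j\le 0\}$. Since $\sum_j\lambda_j=0$ and $\lambda\ne\boldsymbol{0}$, some entry is positive and some is negative. So both $I$ and $J$ are nonempty, and they partition the index set. Put
\[
\Lambda=\sum_{j\in I}\lambda_j=-\sum_{j\in J}\lambda_j>0 .
\]
Rearranging the first equation gives
\[
\sum_{j\in I}\frac{\lambda_j}{\Lambda}x_j=\sum_{j\in J}\frac{-\lambda_j}{\Lambda}x_j .
\]
On each side the coefficients are nonnegative and sum to $1$. The common vector is therefore a point of $\conv\{x_j:j\in I\}\cap\conv\{x_j:j\in J\}$, which gives the required partition.

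There is no real obstacle here. The only points needing care are the nontriviality of the solution, which follows from the dimension count, and the nonemptiness of both parts, which follows from the sum-zero constraint. The degenerate case of repeated points is handled separately as above.
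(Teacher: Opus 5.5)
Your argument is correct. It is the classical affine-dependence proof of Radon's theorem, and each step checks out: $d+1$ homogeneous equations in $d+2$ unknowns give a nontrivial solution, $\sum_j\lambda_j=0$ forces both index classes to be nonempty, and normalizing by $\Lambda>0$ produces a common point of the two convex hulls.

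The paper gives no proof of this lemma. It quotes Radon as a black box in Section~\ref{sec:auxiliary} and uses it only to bound the VC-dimension of halfspaces in Lemma~\ref{lem:halfspace-subsets}, so there is nothing to compare beyond noting that yours is the standard argument.

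One small remark: your separate treatment of coincident points is unnecessary. The statement concerns a set of distinct points, and in any case the linear-algebra argument works verbatim for an indexed family with repetitions.
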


Let \(\mathcal{F}\) be a family of subsets of a set \(X\). A subset \(Y\subseteq X\) is shattered by \(\mathcal{F}\) if for every \(Z\subseteq Y\) there is \(F\in\mathcal{F}\) such that \(Z=Y\cap F\). The VC-dimension \(\VC(\mathcal{F})\) is the largest size of a shattered subset of \(X\).

\begin{lemma}[\cite{Sauer1972,Shelah1972,1971TPAVCVC}]\label{lem:sauer}
Let \(\mathcal{F}\) be a family of subsets of an \(m\)-element set \(X\). If \(\VC(\mathcal{F})\le k\), then \(\abs{\mathcal{F}}\le\sum_{j=0}^{k}\binom{m}{j}\).
\end{lemma}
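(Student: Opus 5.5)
The plan is to prove the Sauer--Shelah bound by induction on \(m+k\). I will write \(\Phi_{k}(m)=\sum_{j=0}^{k}\binom{m}{j}\), with the convention that \(\Phi_{k}(m)=0\) when \(k<0\).

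\textbf{Base cases.} If \(m=0\), then \(\abs{\mathcal{F}}\le1=\Phi_{k}(0)\). If \(k=0\), then no singleton is shattered. This means \(\mathcal{F}\) cannot contain two distinct sets, since some element would lie in exactly one of them and that singleton would then be shattered. Hence \(\abs{\mathcal{F}}\le1=\Phi_{0}(m)\).

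\textbf{Decomposition.} Now let \(m\ge1\) and \(k\ge1\), and fix an element \(x\in X\). Set \(X'=X\setminus\{x\}\). Consider the trace family \(\mathcal{F}'=\{F\setminus\{x\}:F\in\mathcal{F}\}\) on \(X'\), and the doubled family
\[
\mathcal{F}''=\{Y\subseteq X':\ Y\in\mathcal{F}\text{ and }Y\cup\{x\}\in\mathcal{F}\}.
\]
The map \(F\mapsto F\setminus\{x\}\) from \(\mathcal{F}\) onto \(\mathcal{F}'\) is at most two-to-one. Its fibres of size two are exactly the pairs \(\{Y,Y\cup\{x\}\}\) with \(Y\in\mathcal{F}''\). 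Therefore \(\abs{\mathcal{F}}=\abs{\mathcal{F}'}+\abs{\mathcal{F}''}\).

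\textbf{VC-dimension of the two families.} Any set shattered by \(\mathcal{F}'\) is a subset of \(X'\), and it is already shattered by \(\mathcal{F}\), so \(\VC(\mathcal{F}')\le k\). The key point is that \(\VC(\mathcal{F}'')\le k-1\). Suppose \(Z\subseteq X'\) is shattered by \(\mathcal{F}''\), and take any \(W\subseteq Z\cup\{x\}\). Pick \(Y\in\mathcal{F}''\) with \(Y\cap Z=W\setminus\{x\}\). Both \(Y\) and \(Y\cup\{x\}\) lie in \(\mathcal{F}\), and one of them has trace exactly \(W\) on \(Z\cup\{x\}\). So \(Z\cup\{x\}\) is shattered by \(\mathcal{F}\), which forces \(\abs{Z}+1\le k\).

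\textbf{Conclusion.} The induction hypothesis gives \(\abs{\mathcal{F}'}\le\Phi_{k}(m-1)\) and \(\abs{\mathcal{F}''}\le\Phi_{k-1}(m-1)\). Pascal's rule \(\binom{m-1}{j}+\binom{m-1}{j-1}=\binom{m}{j}\), summed over \(j\), gives \(\Phi_{k}(m-1)+\Phi_{k-1}(m-1)=\Phi_{k}(m)\), which completes the induction.

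The only nonroutine step is the inequality \(\VC(\mathcal{F}'')\le k-1\), and that is where the choice of \(\mathcal{F}''\) as the family of doubled traces is essential. Everything else is bookkeeping.
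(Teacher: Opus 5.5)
Your proof is correct. The identity $\abs{\mathcal{F}}=\abs{\mathcal{F}'}+\abs{\mathcal{F}''}$, the bound $\VC(\mathcal{F}'')\le k-1$ obtained by lifting a shattered $Z$ to $Z\cup\{x\}$, and Pascal's rule together make up the standard inductive proof of the Sauer--Shelah lemma. The paper gives no proof of its own, citing the lemma from Sauer, Shelah, and Vapnik--Chervonenkis, so your argument supplies the classical proof it relies on.
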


The proof of Theorem~\ref{thm:lower} starts from the sharp construction for unions of halfspaces due to Csik\'{o}s, Mustafa, and Kupavskii~\cite{2019JMLR}, in a form where every normal vector has positive coordinates. For \(s=1\), take \(X=\{\boldsymbol{e}_{1},\ldots,\boldsymbol{e}_{\ell}\}\): given \(Y\subseteq X\), set the \(i\)-th coordinate of \(\boldsymbol{u}\) equal to \(1\) when \(\boldsymbol{e}_{i}\in Y\) and to \(2\) otherwise, and use the halfspace \(\{\boldsymbol{x}:\langle\boldsymbol{u},\boldsymbol{x}\rangle\le\frac{3}{2}\}\). After decreasing the absolute constant, the statement holds uniformly for all \(s\ge1\).

\begin{lemma}[\cite{2019JMLR}]\label{lem:polytope-vc}
There is an absolute constant \(c_{0}>0\) such that, for all \(\ell\ge4\) and \(s\ge1\), there is a finite set \(X\subseteq\R^{\ell}\) with \(\abs{X}\ge c_{0}\ell s\log(s+1)\) having the following property. For every \(Y\subseteq X\), there are closed halfspaces \(h_{1},\ldots,h_{p}\), where \(1\le p\le s\), such that \(Y=X\cap\bigcup_{a=1}^{p}h_{a}\), and each \(h_{a}\) can be written as
\[
h_{a}=\{\boldsymbol{x}\in\R^{\ell}:\langle\boldsymbol{u}_{a},\boldsymbol{x}\rangle\le c_{a}\}
\]
with \(c_{a}\in\R\) and every coordinate of \(\boldsymbol{u}_{a}\) strictly positive.
\end{lemma}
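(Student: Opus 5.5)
The plan is to take the lower-bound construction of Csik\'{o}s, Mustafa, and Kupavskii~\cite{2019JMLR} essentially as a black box and then add a normalisation step that makes every normal vector positive. Their theorem states that the VC-dimension of the family of unions of at most \(s\) closed halfspaces in \(\R^{k}\) is at least \(c' k s\log(s+1)\) for an absolute constant \(c'>0\), when \(k\ge3\) and \(s\) is large enough. I would apply it with \(k=\ell-1\ge3\). This gives a finite set \(X_{0}\subseteq\R^{\ell-1}\) with \(\abs{X_{0}}\ge c'(\ell-1)s\log(s+1)\ge \tfrac{2}{3}c'\ell s\log(s+1)\) that is shattered by unions of at most \(s\) halfspaces.

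The next step embeds \(X_{0}\) affinely into the hyperplane \(\Pi=\{\boldsymbol{x}\in\R^{\ell}:\sum_{i}x_{i}=1\}\) by an affine bijection \(\R^{\ell-1}\to\Pi\). Halfspaces of \(\R^{\ell-1}\) correspond to traces on \(\Pi\) of halfspaces of \(\R^{\ell}\), so the image \(X\) is still shattered by unions of at most \(s\) halfspaces of \(\R^{\ell}\). Positivity then comes from the following trick, which is the heart of the lemma. For \(\boldsymbol{x}\in\Pi\) and any \(t\in\R\),
\[
\langle\boldsymbol{u},\boldsymbol{x}\rangle\le c \iff \langle\boldsymbol{u}+t\mathbf{1},\boldsymbol{x}\rangle\le c+t ,
\]
where \(\mathbf{1}\) is the all-ones vector. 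For each halfspace \(h_{a}\) used to cut out a given \(Y\subseteq X\), choose \(t\) larger than \(\max_{i}\abs{(\boldsymbol{u}_{a})_{i}}\). The new normal \(\boldsymbol{u}_{a}+t\mathbf{1}\) then has all coordinates strictly positive, and the trace on \(X\subseteq\Pi\) is unchanged. This gives exactly the required representation \(Y=X\cap\bigcup_{a}h_{a}\) with positive normals.

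A few degenerate points need separate handling. If \(Y=\emptyset\), use a single halfspace \(\{\langle\mathbf{1},\boldsymbol{x}\rangle\le 0\}\), which misses \(\Pi\); this ensures \(p\ge1\). If some original halfspace was all of \(\R^{\ell-1}\), replace it by \(\{\langle\mathbf{1},\boldsymbol{x}\rangle\le 1\}\supseteq\Pi\). For small \(s\), including \(s=1\), where the asymptotic bound may not apply directly, use the example \(\{\boldsymbol{e}_{1},\ldots,\boldsymbol{e}_{\ell}\}\) from the remark preceding the lemma. That set has size \(\ell\), and for bounded \(s\) this still gives \(\ell\ge c_{0}\ell s\log(s+1)\) after decreasing \(c_{0}\). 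Shrinking the constant once more absorbs the loss from \(\ell-1\) to \(\ell\).

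The only substantive ingredient is the \(\Omega(ks\log s)\) VC lower bound for \(s\)-fold unions of halfspaces in \(\R^{k}\), which I would cite rather than reprove; that is the main obstacle if one had to reconstruct it. If I had to reconstruct it, I would try a product construction: a shattered configuration for unions of \(s\) halfspaces in low dimension with an \(s\log s\) factor, obtained by a recursive, grid-like arrangement in the spirit of Eisenstat--Angluin-type lower bounds, combined across \(\Theta(k)\) orthogonal coordinate blocks. The factor \(k\) comes from the block decomposition, and the factor \(s\log s\) comes from the recursive low-dimensional configuration. Once that bound is available, everything else in the lemma is the \(\mathbf{1}\)-shift normalisation above.
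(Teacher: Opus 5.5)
Your normalisation is correct as far as it goes. On $\Pi=\{\boldsymbol{x}:\langle\mathbf{1},\boldsymbol{x}\rangle=1\}$ the shift $(\boldsymbol{u},c)\mapsto(\boldsymbol{u}+t\mathbf{1},c+t)$ preserves every trace. Your handling of $Y=\emptyset$, of the halfspace $\R^{\ell-1}$, and of small $s$ via $\{\boldsymbol{e}_{1},\ldots,\boldsymbol{e}_{\ell}\}$ is also fine.

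The gap is the dimension you feed into the citation. The Csik\'{o}s--Mustafa--Kupavskii bound $\Omega(ks\log s)$ for unions of $s$ halfspaces in $\R^{k}$ is established only for $k\ge4$, not $k\ge3$. In the plane the answer is only $O(s)$, since the complements are convex regions with at most $s$ sides, and dimension three is not covered. This is exactly where the hypothesis $\ell\ge4$ in the lemma comes from. Your $X$ lies in a hyperplane of $\R^{\ell}$, and halfspaces of $\R^{\ell}$ traced on a hyperplane are just halfspaces of $\R^{\ell-1}$. So your route unavoidably needs an $(\ell-1)$-dimensional shattered set of size $\Omega(\ell s\log s)$. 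Your argument therefore proves the lemma for $\ell\ge5$, but for $\ell=4$ it needs the bound in $\R^{3}$, which the citation does not provide. This case is not marginal: $\ell=d-r+2=4$ is the boundary case $d=r+2$ of Theorem~\ref{thm:lower}, which gives, e.g., $f_{2}(4,s,s)=\Omega(s^{2}\log s)$.

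The missing idea is to obtain positive normals without giving up a dimension. This cannot be done by treating the VC lower bound as a black box. You have to use the shape of the construction in $\R^{\ell}$ itself, which is what the paper presupposes when it invokes \cite{2019JMLR} ``in a form where every normal vector has positive coordinates''.

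A workable criterion is the following. Suppose the realizing halfspaces can be chosen with all outward normals in a common open halfspace $\{\boldsymbol{u}:\langle\boldsymbol{u},\boldsymbol{y}\rangle>0\}$, so that they are all ``lower'' halfspaces for one direction $\boldsymbol{y}$. Since only finitely many normals occur, there is then a basis $\boldsymbol{a}_{1},\ldots,\boldsymbol{a}_{\ell}$ whose open positive cone contains them all. Let $A$ be the matrix with rows $\boldsymbol{a}_{i}$ and apply the change of coordinates $\boldsymbol{x}\mapsto A\boldsymbol{x}$. This preserves all traces and replaces each normal $\boldsymbol{u}$ by $A^{-\top}\boldsymbol{u}$, which is coordinatewise positive. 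It is the same normal-cone device used in Lemma~\ref{lem:two-part-amplification}. Verifying that criterion, however, requires looking inside the construction of \cite{2019JMLR}.
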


\subsection{An elementary component reduction}

For positive integers \(s_{1},\ldots,s_{r}\), a \emph{full component choice} is a tuple \(\boldsymbol{a}=(a_{1},\ldots,a_{r})\in\prod_{i=1}^{r}[s_{i}]\), namely one component index for every part. The following elementary reduction will be used both in the planar argument and in the general upper bound. It replaces arbitrary convex components by compact hulls of assigned points while preserving every obstruction.

\begin{lemma}\label{lem:component-reduction}
Let \(P_{1},\ldots,P_{r}\) be nonempty finite subsets of \(\R^{d}\). Suppose there are \(s_{i}\)-convex sets \(C_{i}\supseteq P_{i}\) with \(\bigcap_{i=1}^{r}C_{i}=\emptyset\). Then, for every \(i\in [r]\) and \(a\in [s_{i}]\), there is a nonempty compact convex set \(D_{i,a}\) such that \(P_{i}\subseteq\bigcup_{a=1}^{s_{i}}D_{i,a}\), and for every full component choice \(\boldsymbol{a}\) one has \(\bigcap_{i=1}^{r}D_{i,a_{i}}=\emptyset\).
\end{lemma}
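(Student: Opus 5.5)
We write each \(C_{i}\) as a union \(C_{i}=\bigcup_{a=1}^{s_{i}}C_{i,a}\) of convex sets. If \(C_{i}\) is a union of fewer than \(s_{i}\) convex sets, we pad the list by repeating a piece; this is possible because \(C_{i}\supseteq P_{i}\ne\emptyset\) forces at least one nonempty piece. Next we distribute the points of \(P_{i}\) among the pieces. Fix any assignment \(\phi_{i}:P_{i}\to[s_{i}]\) with \(p\in C_{i,\phi_{i}(p)}\) for each \(p\in P_{i}\), and put \(Q_{i,a}=\phi_{i}^{-1}(a)\).

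The natural candidate is \(D_{i,a}=\conv(Q_{i,a})\). This set is compact and convex, being the hull of finitely many points, and it lies inside the convex set \(C_{i,a}\). The only defect is that \(Q_{i,a}\) may be empty. In that case we replace \(D_{i,a}\) by a copy of some nonempty \(D_{i,b}\) with \(b\in[s_{i}]\). Such a \(b\) exists because \(P_{i}\) is nonempty, and we choose it so that \(Q_{i,b}\ne\emptyset\). The covering property \(P_{i}\subseteq\bigcup_{a}D_{i,a}\) holds by construction, since every point lies in the hull of its own class.

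For emptiness, fix a full component choice \(\boldsymbol{a}\). Let \(\boldsymbol{a}'\) be the choice obtained by replacing each index \(a_{i}\) with an empty class by its substitute \(b\). Then \(D_{i,a_{i}}=\conv(Q_{i,a'_{i}})\subseteq C_{i,a'_{i}}\subseteq C_{i}\) for every \(i\), and hence
\[
\bigcap_{i=1}^{r}D_{i,a_{i}}\subseteq\bigcap_{i=1}^{r}C_{i}=\emptyset .
\]

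The only delicate point is the possibility of empty classes, and the duplication trick handles it. The key observation is that the argument never needs \(\bigcap_{i}C_{i,a_{i}}=\emptyset\) piece by piece. Each \(D_{i,a}\) sits inside the whole set \(C_{i}\), so emptiness of the full intersection \(\bigcap_{i}C_{i}\) is already enough.
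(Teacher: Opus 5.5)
Your proof is correct and follows essentially the same route as the paper. Both arguments assign each point of \(P_{i}\) to a component containing it, take convex hulls of the assigned points, and duplicate a nonempty hull for any unused component. They then conclude from \(D_{i,a_{i}}\subseteq C_{i}\) that any common point would lie in \(\bigcap_{i=1}^{r}C_{i}=\emptyset\).
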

\begin{proof}[Proof of Lemma~\ref{lem:component-reduction}]
Write \(C_{i}=\bigcup_{a=1}^{s_{i}}C_{i,a}\), repeating one component if fewer than \(s_{i}\) are used. For each point of \(P_{i}\), choose one component containing it. If a component receives no point, repeat one that does. We let \(D_{i,a}\) be the convex hull of the points assigned to the resulting component \(a\). These sets are nonempty, compact, and convex, and their union contains \(P_{i}\).

Fix a full component choice \(\boldsymbol{a}\). Each \(D_{i,a_{i}}\) lies in a convex component of \(C_{i}\). Hence a point in \(\bigcap_{i=1}^{r}D_{i,a_{i}}\) would lie in every \(C_{i}\), contrary to \(\bigcap_{i=1}^{r}C_{i}=\emptyset\).
\end{proof}

\subsection{A permanent bound}

For an \(r\times r\) matrix \(M=(m_{i,j})\), its permanent is
\[
\operatorname{per}(M)=\sum_{\sigma\in S_{r}}\prod_{i=1}^{r}m_{i,\sigma(i)},
\]
where \(S_{r}\) is the symmetric group on \([r]\). For a zero-one matrix, this is the number of permutations \(\sigma\in S_{r}\) such that \(m_{i,\sigma(i)}=1\) for every \(i\in[r]\). We use Bregman's bound in the following form.

\begin{lemma}[\cite{Bregman1973}]\label{lem:bregman}
Let \(M\) be an \(r\times r\) zero-one matrix with row sums \(a_{1},\ldots,a_{r}\). If all \(a_{j}\) are positive, then \(\operatorname{per}(M)\le\prod_{j=1}^{r}(a_{j}!)^{\frac{1}{a_{j}}}\). If some \(a_{j}=0\), then \(\operatorname{per}(M)=0\).
\end{lemma}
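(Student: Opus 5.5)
If some row sum \(a_{j}\) vanishes, the second assertion is immediate. Every permutation \(\sigma\) then has \(m_{j,\sigma(j)}=0\), so every term in the definition of \(\operatorname{per}(M)\) is zero. So assume all \(a_{j}\ge1\). If \(\operatorname{per}(M)=0\) the bound is trivial, so also assume \(\operatorname{per}(M)\ge1\). I would follow Radhakrishnan's entropy proof rather than Bregman's original argument.

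\textbf{Setup.} Let \(\Sigma\) be the set of permutations \(\sigma\in S_{r}\) with \(m_{i,\sigma(i)}=1\) for all \(i\). Let \(\sigma\) be uniform on \(\Sigma\), so \(H(\sigma)=\log\operatorname{per}(M)\). Independently, let \(\tau\) be a uniformly random ordering of the rows \([r]\). For each fixed \(\tau\), the chain rule gives
\[
H(\sigma)=\sum_{i=1}^{r}H\bigl(\sigma(i)\mid \sigma(j): j\text{ precedes } i\text{ in }\tau\bigr).
\]
Averaging over \(\tau\) leaves the left side unchanged.

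\textbf{Bounding each conditional term.} Fix \(i\), a realization of \(\tau\), and the values \(\sigma(j)\) for the rows \(j\) preceding \(i\). Given this information, \(\sigma(i)\) must lie in the support of row \(i\) and must avoid the columns already used. So the number of possible values is at most
\[
N_{i}(\sigma,\tau)=a_{i}-\#\{j \text{ preceding } i \text{ in }\tau:\ m_{i,\sigma(j)}=1\}.
\]
Hence the conditional entropy is at most \(\mathbb{E}[\log N_{i}]\), the expectation taken over \(\sigma\) and \(\tau\) jointly.

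\textbf{Distribution of \(N_{i}\).} Fix \(\sigma\in\Sigma\). The \(a_{i}\) columns in the support of row \(i\) are the images under \(\sigma\) of exactly \(a_{i}\) distinct rows, namely \(\sigma^{-1}\) of those columns. Row \(i\) is one of them, because \(m_{i,\sigma(i)}=1\). By definition, \(N_{i}\) equals one plus the number of these \(a_{i}\) rows that come after \(i\) in \(\tau\). Under a uniform random order, the position of \(i\) among these \(a_{i}\) rows is uniform. Therefore, for every fixed \(\sigma\), \(N_{i}\) is uniform on \(\{1,\ldots,a_{i}\}\). This gives
\[
\mathbb{E}[\log N_{i}]=\frac{1}{a_{i}}\sum_{k=1}^{a_{i}}\log k=\frac{\log(a_{i}!)}{a_{i}}.
\]
Summing over \(i\) yields \(\log\operatorname{per}(M)\le\sum_{i}\frac{1}{a_{i}}\log(a_{i}!)\), which is the claimed bound.

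\textbf{Main obstacle.} The delicate point is conditioning on \(\tau\) as well as on the earlier values of \(\sigma\). One must check that, for a fixed realization of \(\tau\), the count of available columns really is bounded by \(N_{i}\). One must also check that averaging the uniform-distribution computation over \(\sigma\) is legitimate, given that \(N_{i}\) is a function of both \(\sigma\) and \(\tau\). I would handle this by conditioning on \(\tau\) first and applying the bound \(H(X\mid Y)\le\mathbb{E}\log\abs{\operatorname{supp}(X\mid Y)}\). Only afterwards would I exchange the order of averaging, using that \(\sigma\) and \(\tau\) are independent.
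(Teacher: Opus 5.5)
Your proof is correct. It is Radhakrishnan's entropy proof of Bregman's theorem, and the points you flag all hold. For a fixed \(\tau\), the quantity \(N_{i}\) depends only on the conditioning variables \(\sigma(j)\), \(j\) preceding \(i\). Also \(N_{i}\ge1\), since the column \(\sigma(i)\) is never used by an earlier row. For each fixed \(\sigma\in\Sigma\), the set \(\sigma^{-1}(\{c: m_{i,c}=1\})\) has exactly \(a_{i}\) elements and contains \(i\). Since \(\tau\) is independent of \(\sigma\), \(N_{i}\) is therefore uniform on \(\{1,\ldots,a_{i}\}\), and only then do you average over \(\sigma\).

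The paper takes a different route: it gives no proof and imports the statement as a black box from \cite{Bregman1973}. Bregman's original argument is analytic, not information-theoretic. Citing keeps the auxiliary section short. Your argument makes the permanent step self-contained, at the cost of introducing entropy tools.

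The paper only uses this lemma through Lemma~\ref{lem:permanent}, namely \(\operatorname{per}(M)\le r!/2\) when every row sum is at most \(r-1\). That follows from your bound together with the AM--GM inequality \(((r-1)!)^{1/(r-1)}\le r/2\). The paper asserts this last inequality without comment.
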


\section{The sharp planar case}\label{sec:planar}

\subsection{Overview of the proof}

In view of the lower bound recalled in the introduction, it suffices to prove \(f_{2}(2,s,s)=O(s^{2})\). Let \(P\subseteq\R^{2}\) be finite. A red--blue coloring \(P=P_{R}\sqcup P_{B}\) is \emph{bad} if there are convex sets \(K_{1},\ldots,K_{p}\) and \(L_{1},\ldots,L_{q}\), where \(p,q\le s\), such that \(P_{R}\subseteq\bigcup_{i=1}^{p}K_{i}\), \(P_{B}\subseteq\bigcup_{j=1}^{q}L_{j}\), and \(\left(\bigcup_{i=1}^{p}K_{i}\right)\cap\left(\bigcup_{j=1}^{q}L_{j}\right)=\emptyset\). Thus we need to find a coloring that is not bad and uses both colors whenever \(\abs{P}=cs^{2}\) for some constant \(c>0\).

A partition \(P=B_{1}\sqcup\cdots\sqcup B_{k}\) is \emph{crossing-free} if the convex hulls \(\conv(B_{1}),\ldots,\conv(B_{k})\) are pairwise disjoint. Our proof has three steps. First, we count crossing-free partitions. We show that every such partition into \(k\) blocks can be recovered from at most \(6k-6\) selected points and a bounded amount of additional information. Second, this bound allows us to choose a red--blue coloring that has no crossing-free monochromatic partition with only \(O(s^{2})\) blocks. Finally, we start from any bad coloring and construct exactly such a partition. For the last step, we separate each red convex set from each blue one and use the separating lines to obtain polygons with \(O(s^{2})\) vertices in total. We then cut the plane along a shortest fence between the two colors and triangulate the resulting regions.

The second and third steps contradict each other once \(\abs{P}=cs^{2}\) for some sufficiently large constant \(c>0.\) Next, we divide the proof into three main steps and state them in full details.

\subsection{Counting crossing-free partitions}

Let \(N_{k}(P)\) denote the number of unordered crossing-free \(k\)-partitions of \(P\). We first provide an upper bound as follows.

\begin{lemma}\label{lem:planar-count}
There is an absolute constant \(c_{\mathrm{pc}}\ge1\) such that every \(n\)-point set \(P\subseteq\R^{2}\) and every \(1\le k\le n\) satisfy
\[
N_{k}(P)\le\left(\frac{c_{\mathrm{pc}}n}{k}\right)^{6k}.
\]
\end{lemma}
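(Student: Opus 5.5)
We encode each crossing-free \(k\)-partition by a small amount of data and then count the encodings. The overview already fixes the form: at most \(6k-6\) selected points plus \(O(1)\) bits per point. Write \(\mathcal{B}=\{B_{1},\ldots,B_{k}\}\). Blocks with at most two points (segments or single points) would first be thickened slightly to convex polygons with nonempty interior. For \(k\ge3\) we perturb so that the hulls \(K_{j}=\conv(B_{j})\) are in supporting-line general position. The perturbation must preserve which points lie in which cell; this is standard if we apply Lemma~\ref{lem:supporting-subdivision} to tiny enlargements of the hulls, since disjoint compact hulls stay disjoint under small perturbations. The cases \(k=1,2\) are handled directly. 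For \(k=2\), a separating line can be rotated until it passes through two points of \(P\), which gives \(O(n^{2})\) choices.

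Lemma~\ref{lem:supporting-subdivision} then supplies a pseudo-triangulation \(\mathcal{E}\) of \(3k-3\) free supporting segments whose endpoints are vertices of the \(K_{j}\), hence points of \(P\). Let \(S\) be the set of endpoints, so \(\abs{S}\le 6k-6\). The encoding of \(\mathcal{B}\) consists of four pieces of data:
\begin{enumerate}[(i)]
\item the set \(S\), with at most \(\sum_{m\le 6k}\binom{n}{m}\le (c n/k)^{6k}\) choices;
\item the plane straight-line graph \(\mathcal{E}\) on \(S\), with at most \(c_{\mathrm{pg}}^{6k}\) choices by Lemma~\ref{lem:plane-graph-count} (a generic perturbation handles collinear triples, or one applies the lemma after the perturbation);
\item for each segment, which side of its supporting line contains each endpoint polygon, and whether it is interior or exterior, which is \(O(1)\) bits per segment;
\item the order of degree-one vertices used in the extension step, which we avoid recording by fixing a canonical rule, such as a lexicographic order determined by \(\mathcal{E}\).
\end{enumerate}

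The key claim is that these data determine \(\mathcal{B}\). From \(\mathcal{E}\) and the side information, the supporting lines and the pseudotriangles are determined. The extension of interior segments within adjacent pseudotriangles is therefore determined, and so is the extended screen graph under the canonical order. By Lemma~\ref{lem:supporting-subdivision}, every cell is convex and contains at most one \(K_{j}\), and the \(k\) polygons lie in distinct cells. We also need each point of \(P\) to lie in a cell containing the hull of its own block. Points of \(P\setminus S\) lie in their \(K_{j}\), hence in a unique cell's interior after the general-position perturbation. So \(\mathcal{B}\) is recovered as the partition of \(P\) by cells, keeping only the nonempty ones, provided we also record which cells are occupied. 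Recording this costs at most \(2^{O(k)}\), since the subdivision has \(O(k)\) cells.

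The main obstacle is this reconstruction step. The subdivision is defined from the polygons themselves, including where interior segments are extended and how degree-one vertices arise under the rounding convention of \cite{RoteSantosStreinu2008}. We must check that it depends only on \(\mathcal{E}\) together with the recorded side bits, and not on the unknown remaining vertices of the \(K_{j}\). The approach would be to argue that extensions happen along lines spanned by segments of \(\mathcal{E}\), stopping at other extended segments, which are themselves determined. If needed, we would add the cusp and tangency data at each endpoint, again \(O(1)\) bits per point. Multiplying the counts gives \(N_{k}(P)\le (cn/k)^{6k}\cdot C^{k}\le (c_{\mathrm{pc}}n/k)^{6k}\).
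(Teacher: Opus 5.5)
Your skeleton matches the paper's: reduce to general position, pseudo-triangulate, encode by at most $6k-6$ endpoints plus plane-graph data, and recover the blocks from the supporting-line subdivision with a canonical order of the degree-one vertices. However, two steps fail as written.

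\textbf{Thickening breaks $S\subseteq P$.} Thickening the blocks with at most two points, and likewise applying Lemma~\ref{lem:supporting-subdivision} to ``tiny enlargements'' of the hulls, destroys the property your count rests on. The endpoints of $\mathcal{E}$ on a thickened block are vertices of the thickened polygon, not points of $P$. So $S\not\subseteq P$, and neither the $\binom{n}{m}$ count in (i) nor Lemma~\ref{lem:plane-graph-count} in (ii) applies. Snapping those vertices back to the original points does not help either: for instance, the several bitangents of two thickened singletons all collapse onto one segment.

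The paper avoids this in three moves. It perturbs $P$ itself to a regular set: no three collinear, no two determined lines parallel, no three concurrent outside $P$. This is legitimate because every crossing-free partition of $P$ survives a small perturbation, so $N_k$ can only increase. It then puts every block of size at most two entirely into $S$. Finally, it applies Lemma~\ref{lem:supporting-subdivision} only to the $k'$ blocks with at least three points, which gives $\abs{S}\le 2(k-k')+6k'-6\le 6k-6$; the cases $k'\le 2$ are handled directly.

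\textbf{Reconstruction needs the block grouping of $S$.} You flag this step as the main obstacle but do not carry it out, and your data (i)--(iv) omit the one piece of information it needs: which points of $S$ lie on the same block. A pseudotriangle's boundary alternates between segments of $\mathcal{E}$ and arcs of $\partial K_j$ joining consecutive endpoints on the same polygon. So the pseudotriangles, and hence the screen graph, cannot be read off from $\mathcal{E}$ and the side bits without this grouping. Moreover, the points of $S$ lie on edges of the subdivision, not in cell interiors, so ``partition $P$ by cells'' does not assign them. Extra cusp or tangency bits at each endpoint are local and do not encode this global grouping.

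The paper's fix is to record the traces $B_j\cap S$ as a second plane straight-line graph on $S$: a star inside each trace, which is noncrossing because the stars lie in the pairwise disjoint hulls. This costs another $c_{\mathrm{pg}}^{m}$, plus one bit per component saying whether the block is small. With this record the rest follows:
\begin{itemize}
\item convex position fixes the cyclic order of $B_j\cap S$ on $\partial K_j$;
\item segment directions fix the order of segments at a shared endpoint;
\item the side bits say where each polygon lies.
\end{itemize}
Hence the pseudotriangles, the screen graph, and (with your canonical order) the subdivision are all determined. Each point of $P\setminus S$ avoids every line through two points of $S$ by regularity, so it is assigned to the trace whose polygon's cell contains it.
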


\begin{proof}[Proof of Lemma~\ref{lem:planar-count}]
We first label the points of \(P\). We call a labeled point set \emph{regular} if no three points are collinear, no two lines determined by pairs of points are parallel, and no three such lines meet outside the point set. We may assume that \(P\) is regular. To see this, consider all crossing-free partitions of \(P\). There are only finitely many, and the distance between the convex hulls of any two blocks in any one of them is positive. We can therefore move the points by a sufficiently small amount without destroying any of these partitions. At the same time, we choose the perturbation so that no three points become collinear, no two determined lines become parallel, and no three determined lines meet outside the point set. The perturbed set may have additional crossing-free partitions, but it retains every partition of the original set, which is all we need for an upper bound.

Fix a crossing-free partition \(\Pi=\{B_{1},\ldots,B_{k}\}\) with \(k\ge3\), and put \(K_{i}=\conv(B_{i})\) for \(i\in[k]\). Call a block \emph{large} if it has at least three points and \emph{small} otherwise. Let \(I=\{i\in[k]:\abs{B_{i}}\ge3\}\) and \(k'=\abs{I}\). Since \(P\) is regular, the sets \(K_{i}\), \(i\in I\), are convex polygons with nonempty interiors. They are pairwise disjoint and are in the supporting-line general position defined before Lemma~\ref{lem:supporting-subdivision}.

\begin{claim}\label{claim:small-support}
If \(k'\ge3\), we can choose a family \(\mathcal{E}\) as in Lemma~\ref{lem:supporting-subdivision} whose endpoint set \(S_{1}\subseteq\bigcup_{i\in I}B_{i}\) satisfies \(\abs{S_{1}}\le6k'-6\) and meets every large block.
\end{claim}

\begin{poc}
We apply Lemma~\ref{lem:supporting-subdivision} to the polygons \(K_{i}\), \(i\in I\), and let \(\mathcal{E}\) be the resulting family. By general position, each endpoint is a vertex of one of these polygons and hence a point of the corresponding block. We take \(S_{1}\) to be the set of endpoints. Then \(\abs{S_{1}}\le2\abs{\mathcal{E}}=6k'-6\), and every large block meets \(S_{1}\).
\end{poc}

We next assign a set \(S\) and a family \(\mathcal{E}\) to the partition. Put \(S_{0}=\bigcup_{i\notin I}B_{i}\), so \(\abs{S_{0}}\le2(k-k')\). If \(k'\ge3\), choose one family supplied by Claim~\ref{claim:small-support} and let \(S_{1}\) be its endpoint set. If \(k'=2\), choose a segment \([x,y]\) whose line supports both polygons and places them in opposite closed halfplanes, set \(\mathcal{E}=\{[x,y]\}\) and \(S_{1}=\{x,y\}\). If \(k'=1\), let \(S_{1}\) consist of one point in the unique large block and put \(\mathcal{E}=\emptyset\). If \(k'=0\), put \(S_{1}=\mathcal{E}=\emptyset\). We make these choices once and for all for each partition.

Set \(S=S_{0}\cup S_{1}\). Every block meets \(S\), every small block is contained in \(S\), and \(\abs{\mathcal{E}}\le3k-3\). Moreover,
\[
k\le\abs{S}\le6k-6.
\]
Indeed, the upper bound follows from \(2(k-k')+6k'-6\) when \(k'\ge3\), and from \(2k-2\), \(2k-1\), or \(2k\) when \(k'=2,1,0\), respectively.

We next count how many partitions can select the same set \(S\).

\begin{claim}\label{claim:discrete-record}
There is an absolute constant \(c_{\mathrm{rec}}\ge1\) such that, for every fixed \(m\)-point set \(S\subseteq P\), at most \(c_{\mathrm{rec}}^{m}\) crossing-free partitions select \(S\) by the preceding rule.
\end{claim}

\begin{poc}
First, the segments in \(\mathcal{E}\) form a plane straight-line graph \(G_{\mathcal{E}}\) whose vertices lie in \(S\). Second, for every nonempty set \(B_{i}\cap S\), join its first point to all its other points. These stars, together with the isolated points of \(S\), form another plane straight-line graph \(G_{\Pi}\) on \(S\). Indeed, each star lies in \(K_{i}\), and the polygons \(K_{i}\) are pairwise disjoint. The connected components of \(G_{\Pi}\) are precisely the sets \(B_{i}\cap S\). For each component, we also record whether the corresponding block is small.

We apply Lemma~\ref{lem:plane-graph-count} to both graphs. There are at most \(c_{\mathrm{pg}}^{2m}\) ordered pairs \((G_{\mathcal{E}},G_{\Pi})\). For each segment in \(\mathcal{E}\), we also record which closed halfplane contains the polygon at each endpoint. This gives at most \(4^{\abs{\mathcal{E}}}\) choices. Recording which components of \(G_{\Pi}\) come from small blocks gives at most \(2^{m}\) further choices. Since \(\abs{\mathcal{E}}\le3m\), the total number of records is at most \(c_{\mathrm{rec}}^{m}\) after increasing the absolute constant.

It remains to check that one record determines at most one partition. The components marked small already give all blocks of size at most two. If \(k'=0\), we are done. If \(k'=1\), every point not in a small block belongs to the unique large block. If \(k'=2\), the line containing the segment in \(\mathcal{E}\), together with the two recorded sides, assigns every point of \(P\setminus S\) to one of the two large blocks.

Suppose now that \(k'\ge3\). The endpoints of a segment in \(\mathcal{E}\) determine its supporting line, and \(G_{\Pi}\) identifies the two sets \(B_{i}\cap S\) containing those endpoints. For \(i\in I\), the points of \(B_i\cap S\) are exactly the endpoints of \(\mathcal{E}\) on \(K_i\). Their coordinates determine their cyclic order on \(\partial K_i\). If several segments share an endpoint, their directions determine their order around that point, and the recorded sides determine on which side of each segment the polygon lies. Hence the record determines which segments occur, and in what order, on the boundary of each pseudotriangle, as well as which segments are interior.

The record therefore determines the screen graph, with degree-one vertices at the same point kept distinct as above. Orient each segment of \(\mathcal{E}\) from the endpoint with the smaller label to the other endpoint. Order the degree-one vertices first by the smaller endpoint label of their segments and then by the larger endpoint label. If two vertices come from the same segment, put first the one arising from the pseudotriangle on the left of the oriented segment. We perform the second step in this order. Hence the record determines the subdivision.

The subdivision has convex cells, and the polygons \(K_{i}\), \(i\in I\), lie in distinct cells. The recorded sides identify the cell containing each \(K_i\) and associate it with \(B_i\cap S\). Every subdivision edge lies on the supporting line of a segment of \(\mathcal{E}\), hence on a line through two points of \(S\). Since \(P\) is regular, no point of \(P\setminus S\) lies on the subdivision. Every point of \(P\setminus S\) lies in the interior of the cell that contains its block polygon, so we can assign it to the corresponding trace \(B_{i}\cap S\). Thus the record determines all large blocks and hence the whole partition.
\end{poc}

Put \(u=6k-6\). We apply Claims~\ref{claim:small-support} and~\ref{claim:discrete-record} to obtain
\[
N_{k}(P)\le\sum_{m=k}^{\min\{u,n\}}\binom{n}{m}c_{\mathrm{rec}}^{m}.
\]
If \(n\ge12k\), then \(u\le\frac{n}{2}\). For \(k\le m<u\), the ratio of two consecutive summands is \(c_{\mathrm{rec}}\frac{n-m}{m+1}>1\), so the sum is at most \(u\binom{n}{u}c_{\mathrm{rec}}^{u}\). We use \(k\le u\le6k\) and \(\binom{n}{u}\le(\frac{en}{u})^{u}\), and then enlarge \(c_{\mathrm{pc}}\) to obtain \(N_{k}(P)\le(\frac{c_{\mathrm{pc}}n}{k})^{6k}\). If \(n<12k\), then it is easy to see the sum is at most \((1+c_{\mathrm{rec}})^{n}\le(1+c_{\mathrm{rec}})^{12k}\), which gives the same bound after another enlargement of \(c_{\mathrm{pc}}\).

For \(k=1\) there is nothing to prove. Let \(k=2\). We apply Lemma~\ref{lem:strict-separation} to the two block hulls and obtain a separating line. We use the standard point-line duality, which sends each point of \(P\) to a line and each possible separating line to a point in the dual plane. As the dual point moves without crossing any of the \(n\) dual lines, no point of \(P\) changes side. Thus one cell of the subdivision formed by the dual lines corresponds to at most one bipartition of \(P\). Adding the dual lines one at a time shows that there are at most \(1+n+\binom{n}{2}\) cells. This is bounded by \((\frac{c_{\mathrm{pc}}n}{k})^{6k}\) after increasing \(c_{\mathrm{pc}}\). This finishes the proof.
\end{proof}

\subsection{Choosing the coloring}

We now use the counting results to select a coloring with no small crossing-free monochromatic partition.

\begin{lemma}\label{lem:planar-coloring}
There is an absolute constant \(c_{\mathrm{col}}>0\) such that every planar point set with at least \(c_{\mathrm{col}}t\) points has a red--blue coloring for which no crossing-free partition into monochromatic blocks has at most \(t\) red blocks and at most \(t\) blue blocks.
\end{lemma}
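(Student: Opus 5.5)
We will use the probabilistic method with a uniformly random red--blue coloring, together with the counting bound of Lemma~\ref{lem:planar-count}. Let \(P\) have \(n\ge c_{\mathrm{col}}t\) points; by discarding points we may assume \(n=\lceil c_{\mathrm{col}}t\rceil\). The only subtlety here is to avoid discarding too much: a monochromatic partition of \(P\) restricts to a monochromatic crossing-free partition of any subset with at most as many blocks of each color. Hence it suffices to find a good coloring of a subset and then extend it arbitrarily, so the reduction is harmless.

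Color each point of \(P\) red or blue independently with probability \(\tfrac12\). A crossing-free partition with at most \(t\) red and at most \(t\) blue blocks has \(k\le 2t\) blocks in total. For a fixed crossing-free partition \(\{B_1,\ldots,B_k\}\), the event that every block is monochromatic has probability exactly \(2^{k}\cdot 2^{-n}=2^{k-n}\): each block independently chooses one of two colors. Summing over all partitions, the expected number of bad partitions is at most
\[
\sum_{k=1}^{2t}N_{k}(P)\,2^{k-n}\le\sum_{k=1}^{2t}\left(\frac{c_{\mathrm{pc}}n}{k}\right)^{6k}2^{k-n}.
\]
The next step is to show this sum is less than \(1\) once \(c_{\mathrm{col}}\) is large.

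Write \(n=Ck\cdot\frac{n}{Ck}\) and analyse the summand \(\bigl(\frac{c_{\mathrm{pc}}n}{k}\bigr)^{6k}2^{k-n}\) as a function of \(k\le 2t\le 2n/c_{\mathrm{col}}\). Setting \(x=n/k\ge c_{\mathrm{col}}/2\), its logarithm base \(2\) is \(k\bigl(6\log_2(c_{\mathrm{pc}}x)+1-x\bigr)\). For \(x\) larger than a suitable absolute constant, this is at most \(-kx/2=-n/2\). Consequently each term is at most \(2^{-n/2}\), and the sum is at most \(2t\,2^{-n/2}\le n2^{-n/2}<1\) for \(n\) beyond an absolute constant. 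This is guaranteed by taking \(c_{\mathrm{col}}\) large, since \(n\ge c_{\mathrm{col}}t\ge c_{\mathrm{col}}\).

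Hence some coloring admits no such partition. If one also wants both colors to be used, the all-red coloring has the one-block red partition, which is itself a bad partition when \(t\ge1\), so the good coloring automatically uses both colors. I expect no real obstacle beyond this bookkeeping. The main point is that the exponent \(6k\log(n/k)\) coming from Lemma~\ref{lem:planar-count} is dominated by the gain \(n-k\) precisely because \(n/k\) is bounded below by a large constant, and this is what produces the linear dependence \(n=O(t)\).
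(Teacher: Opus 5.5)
Your proof is correct and is essentially the paper's argument: the first-moment bound $\sum_{k\le 2t}N_k(P)\,2^{k-n}<1$ is exactly the paper's count of colorings via Lemma~\ref{lem:planar-count}, with at most $2^k$ colorings per crossing-free $k$-partition and a total below $2^{n}$, and you use the same reduction to a subset of about $c_{\mathrm{col}}t$ points and the same one-block argument that both colors appear. The only difference is cosmetic: you bound every summand by $2^{-n/2}$ using $n/k\ge c_{\mathrm{col}}/2$, whereas the paper shows the summands increase in $k$ and bounds the sum by $2t$ times the term at $k=2t$.
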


\begin{proof}[Proof of Lemma~\ref{lem:planar-coloring}]
We count the colorings that admit such a partition. For each \(k\), we first choose a crossing-free \(k\)-partition and then choose a color for each block. There are at most \(2^{k}\) choices in the second step, and every coloring under consideration occurs for some \(k\le2t\).

Take \(c_{\mathrm{col}}\) to be a sufficiently large positive integer, and first suppose that \(\abs{P}=c_{\mathrm{col}}t\). For \(1\le k<2t\), the ratio of the \((k+1)\)-st summand to the \(k\)-th summand is
\(2(\frac{c_{\mathrm{pc}}\abs{P}}{k+1})^{6}
(\frac{k}{k+1})^{6k}\).
Since \((1+\frac{1}{k})^{k}<e\) and \(k+1\le2t\), this ratio is at least
\(2(\frac{c_{\mathrm{pc}}c_{\mathrm{col}}}{2e})^{6}\), and hence is larger than \(1\) when \(c_{\mathrm{col}}\) is sufficiently large. Lemma~\ref{lem:planar-count} therefore gives
\[
\sum_{k=1}^{2t}\left[2\left(\frac{c_{\mathrm{pc}}\abs{P}}{k}\right)^{6}\right]^{k}
\le
2t\left[2\left(\frac{c_{\mathrm{pc}}c_{\mathrm{col}}}{2}\right)^{6}\right]^{2t}
<
2^{c_{\mathrm{col}}t}
=
2^{\abs{P}}.
\]
The last inequality holds for all \(t\ge1\) once
\(c_{\mathrm{col}}\) is chosen sufficiently large. Hence there exists some red--blue coloring of \(P\) admitting no forbidden partition. Both colors occur, since a monochromatic coloring admits a crossing-free partition with one block.

If \(\abs{P}>c_{\mathrm{col}}t\), choose a subset of \(c_{\mathrm{col}}t\) points, color it as above, and color the remaining points arbitrarily. Any forbidden partition of \(P\) would restrict, after deleting empty blocks, to a forbidden partition of the chosen subset. This proves the lemma.
\end{proof}

\subsection{Refining a bad coloring}

The last step converts disjoint \(s\)-convex covers into a crossing-free monochromatic partition with \(O(s^{2})\) blocks.

\begin{lemma}\label{lem:planar-refinement}
There is an absolute constant \(c_{\mathrm{ref}}>0\) such that every bad coloring for the parameter \(s\) has a crossing-free partition into at most \(c_{\mathrm{ref}}s^{2}\) monochromatic blocks.
\end{lemma}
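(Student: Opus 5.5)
We follow the outline in the overview: a bad coloring comes with convex sets \(K_{1},\ldots,K_{p}\) covering \(P_{R}\) and \(L_{1},\ldots,L_{q}\) covering \(P_{B}\), where \(p,q\le s\), and \(\bigl(\bigcup K_{i}\bigr)\cap\bigl(\bigcup L_{j}\bigr)=\emptyset\). We first replace them by compact convex hulls of assigned points, exactly as in Lemma~\ref{lem:component-reduction}. This makes every \(K_{i}\) and \(L_{j}\) a convex polygon (possibly degenerate) with \(K_{i}\cap L_{j}=\emptyset\). For each of the at most \(s^{2}\) pairs \((i,j)\) we apply Lemma~\ref{lem:strict-separation} and fix a line \(\ell_{ij}\) strictly separating \(K_{i}\) from \(L_{j}\), with closed halfplanes \(h_{ij}^{R}\supseteq K_{i}\) and \(h_{ij}^{B}\supseteq L_{j}\).

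Next we set \(K_{i}'=\bigcap_{j}h_{ij}^{R}\) and \(L_{j}'=\bigcap_{i}h_{ij}^{B}\). These are convex polygons (intersecting with a large box if needed) with \(K_{i}'\supseteq K_{i}\) and \(L_{j}'\supseteq L_{j}\), and still \(K_{i}'\cap L_{j}'=\emptyset\). The total number of edges is at most \(2s^{2}+O(s)\). Let \(R=\bigcup K_{i}'\) and \(B=\bigcup L_{j}'\). These are disjoint compact polygonal sets whose boundaries are formed by at most \(O(s^{2})\) line segments. The union of \(s\) convex polygons with \(e\) total edges has complexity \(O(e+s^{2})\) through pairwise boundary crossings (two convex polygons cross at most twice per edge pair, but globally the union has \(O(s^{2})\) vertices from pairwise intersections plus the edges). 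So \(R\) and \(B\) are polygonal regions with \(O(s^{2})\) vertices.

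Now apply Lemma~\ref{lem:shortest-fence} to get a shortest fence \(F\) separating \(R\) from \(B\). The key claim, and the expected main obstacle, is that \(F\) has \(O(s^{2})\) vertices. A shortest fence is locally a taut string, so each vertex of \(F\) should be a reflex contact point lying at a vertex of \(R\) or \(B\). A vertex in free space could be shortcut. The fence should also visit each such vertex boundedly often, because a repeated visit would let one cut and reconnect to shorten it. We would establish this with a local shortening argument and conclude that \(F\) has at most \(O(s^{2})\) vertices. The two closed regions \(U_{R}\supseteq R\) and \(U_{B}\supseteq B\) determined by \(F\) are polygonal with \(O(s^{2})\) vertices in total, counting holes.

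Finally, triangulate \(U_{R}\) and \(U_{B}\) separately, using only their own vertices. A polygonal region with \(v\) vertices and \(h\) holes has \(v+2h-2=O(s^{2})\) triangles. Each point of \(P_{R}\) lies in \(R\subseteq U_{R}\) and each point of \(P_{B}\) in \(U_{B}\). We perturb slightly, or use a tie-breaking rule, so that each point of \(P\) is assigned to one triangle of its own region: a point on the fence goes to its color's side, and a point on a shared edge goes to a fixed triangle containing it. The blocks are the nonempty sets of points assigned to one triangle. Each block is monochromatic, and its convex hull lies in its triangle. Disjointness of the hulls follows because the triangles have disjoint interiors and finite blocks lying in closed triangles can be shrunk. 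It is cleanest to first choose \(F\) and the triangulation vertices away from \(P\), using the strict separation margin, so no point of \(P\) lies on any triangle edge. The number of blocks is \(O(s^{2})\), which gives \(c_{\mathrm{ref}}\).
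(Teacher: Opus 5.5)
The step you flag as the main obstacle, that a shortest fence has $O(s^{2})$ edges, is the whole content of the lemma, and your proposal only asserts it (``should'', ``we would establish''). The count you intend to feed it into is also unjustified. You bound the number of vertices of the unions $R=\bigcup_i K_i'$ and $B=\bigcup_j L_j'$ by $O(s^{2})$ via pairwise intersections. But two convex polygons with $\Theta(s)$ edges can have boundaries crossing $\Theta(s)$ times, so over $O(s^{2})$ pairs that reasoning only gives $O(s^{3})$. The relevant set is instead $V_0$, the vertices of the individual polygons $K_i'$ and $L_j'$, of size $m=O(s^{2})$. You then have to prove that the fence bends only at points of $V_0$. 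At a crossing of two red boundaries, or at an interior point of an edge, the complement of the red union is locally an intersection of halfplanes, so a bend there can be straightened, as in the paper. Likewise, ``each vertex is visited boundedly often, since a repeated visit could be cut and reconnected'' is not yet an argument. A priori the fence may have many components and vertices of degree $2k\ge4$, and nothing in your sketch controls these, or the number of holes $h$ in your triangle count $v+2h-2$.

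The paper settles this in Claim~\ref{claim:planar-fence} by a global count:
\begin{itemize}
\item the cleaned-up fence is a plane graph $G$ whose faces alternate in color;
\item a face containing no polygon interior could be recolored to shorten the fence, so $f_G\le p+q$;
\item every degree-two vertex lies in $V_0$;
\item Euler's formula $e_G-v_G=f_G-1-\kappa_G=\frac12\sum_v(\deg_G(v)-2)$ bounds the number of vertices of degree at least four.
\end{itemize}
Together these give $e_G\le m+2(p+q)$.

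Your local route can also be completed, but it needs an observation your sketch lacks. Since $R$ and $B$ are at positive distance, a small disk around a fence vertex $v$ meets polygons of at most one color, say red (or none). If $\deg_G(v)=2k\ge4$, at most one of the $k$ blue wedges at $v$ has angle at least $\pi$, since two such wedges would leave no room for the red wedges. So some blue wedge of angle less than $\pi$ contains no polygon near $v$. Recoloring a small triangle at its apex red replaces two segments by a shorter chord, contradicting minimality. The same cut works at a degree-two vertex unless the unique wedge of angle less than $\pi$ has polygon material of its own color arbitrarily close to $v$. In that case some polygon with nonempty interior contains $v$ and lies, near $v$, inside that wedge, so $v$ is one of its vertices. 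Thus every vertex has degree two and lies in $V_0$, so $e_G\le m$.

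Two smaller repairs are also needed.
\begin{itemize}
\item One of your regions $U_R,U_B$ is unbounded, so you must clip by a large triangle before triangulating.
\item The triangulation's vertices are forced and its diagonals may pass through points of $P$, so points on a shared edge must be assigned consistently, for example to the first triangle in a fixed order as in the paper. An arbitrary per-point choice (say $x,z$ to one triangle and $y\in[x,z]$ to the other) makes the two block hulls meet.
\end{itemize}
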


\begin{proof}[Proof of Lemma~\ref{lem:planar-refinement}]
We apply Lemma~\ref{lem:component-reduction} to the red and blue covers. After discarding repetitions, we obtain nonempty compact convex sets \(K_{1},\ldots,K_{p}\) and \(L_{1},\ldots,L_{q}\), where \(p,q\le s\), such that the red points lie in \(\bigcup_{i=1}^{p}K_{i}\), the blue points lie in \(\bigcup_{j=1}^{q}L_{j}\), and \(K_{i}\cap L_{j}=\emptyset\) for every \(i,j\).

For each pair \((i,j)\), we apply Lemma~\ref{lem:strict-separation}. After rescaling the resulting affine function \(\ell_{i,j}:\R^{2}\to\R\), we may assume that \(\ell_{i,j}\ge2\) on \(K_{i}\) and \(\ell_{i,j}\le-2\) on \(L_{j}\). Choose a closed triangle \(T\) that contains all these sets in its interior. Define \(R_{i}=T\cap\bigcap_{j=1}^{q}\{\ell_{i,j}\ge1\}\) and \(B_{j}=T\cap\bigcap_{i=1}^{p}\{\ell_{i,j}\le-1\}\). These are convex polygons with \(K_{i}\subseteq\operatorname{int}(R_{i})\), \(L_{j}\subseteq\operatorname{int}(B_{j})\), and \(R_{i}\cap B_{j}=\emptyset\). Let \(m\) be their total number of vertices, counting a common vertex once for each polygon in which it occurs. Since \(R_{i}\) is cut out inside \(T\) by \(q\) halfplanes and \(B_{j}\) by \(p\) halfplanes,
\[
m\le p(q+3)+q(p+3)=2pq+3p+3q.
\]

Put \(R=\bigcup_{i=1}^{p}R_{i}\) and \(B=\bigcup_{j=1}^{q}B_{j}\). These compact polygonal sets are disjoint. We apply Lemma~\ref{lem:shortest-fence} and choose a shortest fence separating them. Call the two sides of the fence red and blue according to whether they contain \(R\) or \(B\).

We now view the fence as a plane graph. First split its segments at every intersection. Delete any edge that has the same color on both sides, and merge two consecutive collinear edges whenever their common vertex has degree two. The remaining edges form a plane straight-line graph \(G\). A \emph{face} of \(G\) is a connected component of \(\R^{2}\setminus G\), every face is red or blue. Let \(e_{G},v_{G},f_{G}\), and \(\kappa_{G}\) be the numbers of edges, vertices, faces, and connected components of \(G\), respectively.

\begin{claim}\label{claim:planar-fence}
 \(e_{G}\le m+2(p+q)\).
\end{claim}

\begin{poc}
Every edge separates a red face from a blue face. In particular, no edge is a \emph{bridge}, meaning an edge whose removal increases the number of connected components.

We first bound the number of faces. Suppose that a face contains no interior point of any \(R_{i}\) or \(B_{j}\). Every adjacent face has the opposite color, so we may recolor the empty face and remove its boundary. This produces a shorter fence that still separates \(R\) from \(B\), a contradiction. Thus every face contains the interior of at least one polygon. Conversely, the interior of each polygon is connected and does not meet the fence, so it lies in a single face. Therefore \(f_{G}\le p+q\).

Let \(V_{0}\) be the set of vertices of the polygons \(R_{i}\) and \(B_{j}\), so \(\abs{V_{0}}\le m\). We claim that every degree-two vertex of \(G\) belongs to \(V_{0}\). Suppose otherwise, and let \(v\notin V_{0}\) have degree two. Its two edges are not collinear, since we already merged collinear edges. Choose a small disk \(D\) around \(v\) that contains no other vertex of \(G\) and meets \(G\) only in the two edges at \(v\). Let \(x\) and \(y\) be the two points where these edges leave \(D\). If \(v\) lies outside all polygons, we may choose \(D\) disjoint from them. If \(v\) lies on the boundary of the red union, we choose \(D\) disjoint from the blue union and small enough that the part of \(D\) outside the red union is an intersection of halfplanes and hence convex. The blue case is symmetric.

Replace the broken path \([x,v]\cup[v,y]\) by the straight segment \([x,y]\), and give the region between the old and new paths the color of the face containing \([x,y]\setminus\{x,y\}\). This preserves the separation of all polygons and strictly decreases the length, contradicting the choice of the fence. Thus every degree-two vertex lies in \(V_{0}\), and there are at most \(m\) such vertices.

Every other vertex has even degree at least four, because the face colors alternate around it. Let \(h\) be the number of these vertices. Euler's formula and the identity \(\sum_{v\in V(G)}\deg_{G}(v)=2e_{G}\) give
\[
e_{G}-v_{G}=f_{G}-1-\kappa_{G}=\frac{1}{2}\sum_{v\in V(G)}(\deg_{G}(v)-2)\ge h.
\]
Since \(v_{G}\le m+h\), we obtain \(e_{G}\le m+2(f_{G}-1-\kappa_{G})\le m+2(p+q)\).
\end{poc}

We now have \(e_{G}\le2pq+5p+5q\le12s^{2}\). Every point of \(P\) lies in the interior of a polygon of its color, so it lies outside \(G\) and belongs to a face of that color. Choose a large closed triangle \(\Delta\) containing \(G\) and \(P\). We say that a triangulation of \(\Delta\) \emph{respects \(G\)} if every edge of \(G\) is a union of edges of the triangulation. By adding edges and, when needed, vertices on existing edges, we obtain such a triangulation. Since every vertex of \(G\) has degree at least two, \(v_{G}\le e_{G}\). We may therefore complete \(G\) and the boundary of \(\Delta\) to a triangulation with \(O(e_{G}+1)\) closed triangles. Each triangle lies in the closure of one face of \(G\).

Fix an order of the triangles and assign each point of \(P\) to the first triangle containing it. The points assigned to one triangle have the color of the face containing its interior, so every resulting block is monochromatic.

We check that the convex hulls of distinct nonempty blocks are disjoint. Let \(P_{a}\) and \(P_{b}\) be assigned to triangles \(\Delta_{a}\) and \(\Delta_{b}\), with \(\Delta_{a}\) earlier in the order. The two triangles are disjoint or meet in a common vertex or edge \(\sigma\). In the latter case, the assignment rule implies \(P_{b}\cap\sigma=\emptyset\). Choose an affine function whose maximum on \(\Delta_{b}\) is attained exactly on \(\sigma\). Its value on every point of the finite set \(P_{b}\) is strictly smaller than this maximum, and the same is true throughout \(\conv(P_{b})\). Hence \(\conv(P_{b})\cap\sigma=\emptyset\). Since \(\conv(P_{a})\subseteq\Delta_{a}\) and \(\conv(P_{b})\subseteq\Delta_{b}\), their convex hulls are disjoint.

After discarding empty blocks, we obtain a crossing-free monochromatic partition with \(O(e_{G}+1)=O(s^{2})\) blocks. By increasing the absolute constant, we may bound their number by \(c_{\mathrm{ref}}s^{2}\).
\end{proof}

\subsection{Completing the proof}

\begin{proof}[Proof of Theorem~\ref{thm:planar}]
It remains to prove the upper bound. Put \(t=\lceil c_{\mathrm{ref}}s^{2}\rceil\), where \(c_{\mathrm{ref}}\) is the constant in Lemma~\ref{lem:planar-refinement}. We apply Lemma~\ref{lem:planar-coloring} to a planar point set of size at least \(c_{\mathrm{col}}t\) and obtain a red--blue coloring with no crossing-free monochromatic partition having at most \(t\) blocks of each color. If the coloring were bad, we could apply Lemma~\ref{lem:planar-refinement} to obtain such a partition with at most \(c_{\mathrm{ref}}s^{2}\le t\) blocks in total, a contradiction. Hence the coloring is not bad and \(f_{2}(2,s,s)=O(s^{2})\). This proves the theorem.
\end{proof}

\section{Lower bounds}

We now prove Theorems \ref{thm:lower} and \ref{thm:power-lower}. Two geometric operations will be used. The first places affine copies of a point set near distinct vertices of a polytope and produces a two-part construction without increasing the dimension. Every vertex of a polytope is exposed, meaning that some linear functional has it as its unique maximizer. The second operation places affine copies of the current point set inside different facets of a higher-dimensional polytope and adds one further part at the cost of one dimension.

We first give a precise form of the property that will be preserved. Let \(X\subseteq\R^{d}\) be finite, let \(k\ge2\), and let \(s_{1},\ldots,s_{k}\ge1\). We say that \(X\) has property \(\mathsf{B}(s_{1},\ldots,s_{k})\) if the following holds. For every map \(\chi:X\to[k]\), write \(X_{i}=\chi^{-1}(i)\). There are partitions
\[
X_{i}=X_{i,1}\sqcup\cdots\sqcup X_{i,s_{i}}
\]
for \(i\in[k]\), where some sets may be empty, such that for every \(\boldsymbol{a}=(a_{1},\ldots,a_{k})\in\prod_{i=1}^{k}[s_{i}]\),
\[
\bigcap_{i=1}^{k}\operatorname{conv}(X_{i,a_{i}})=\emptyset.
\]
The order of the quantifiers is important: after \(\chi\) is fixed, we choose the partitions, and those fixed partitions must satisfy the displayed condition simultaneously for every \(\boldsymbol{a}\).
The point of requiring this for every choice of one component from each part is that it immediately gives a bad point set for \(f_{k}\). Indeed, put \(C_{i}=\bigcup_{a=1}^{s_{i}}\operatorname{conv}(X_{i,a})\). Then \(C_{i}\) is \(s_{i}\)-convex and contains \(X_{i}\), while
\[
\bigcap_{i=1}^{k}C_{i}=\bigcup_{\boldsymbol{a}\in\prod_{i=1}^{k}[s_{i}]}\bigcap_{i=1}^{k}\operatorname{conv}(X_{i,a_{i}})=\emptyset.
\]
Consequently, if \(X\) has property \(\mathsf{B}(s_{1},\ldots,s_{k})\), then \(f_{k}(d,s_{1},\ldots,s_{k})>\abs{X}\).

The next lemma is the new step that saves one dimension. Unlike the facet lifting in Lemma~\ref{lem:facet-lifting}, which increases the ambient dimension when a new part is added, it creates the first two parts in the same space \(\R^{\ell}\). Its assumption restricts the directions of the halfspaces, and Lemma~\ref{lem:polytope-vc} provides exactly this additional property.

\begin{lemma}\label{lem:two-part-amplification}
Let \(\ell\ge2\), and let \(X\subseteq\R^{\ell}\) be finite. Suppose that every subset of \(X\) is the intersection of \(X\) with a union of at most \(s\) closed halfspaces, each having a normal vector whose coordinates are strictly positive. For every integer \(t\ge1\), there is a set \(X'\subseteq\R^{\ell}\) of \(t\abs{X}\) points having property \(\mathsf{B}(s,t)\).
\end{lemma}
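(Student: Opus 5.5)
The plan is to place $t$ affine copies of $X$ near distinct vertices of a full-dimensional polytope in $\R^{\ell}$. Each copy is distorted linearly so that every positive-normal halfspace on $X$ becomes, in the ambient space, a halfspace whose normal exposes that copy's vertex. Blue parts (colour $2$) will be indexed by the copies. Red parts (colour $1$) will be indexed by the halfspace index $a\in[s]$, collected across all copies.

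\emph{Construction.} Fix a full-dimensional polytope $Q\subseteq\R^{\ell}$ with at least $t$ vertices $v_{1},\ldots,v_{t}$; this is possible since $\ell\ge2$. The normal cone $N_{j}$ of $Q$ at $v_{j}$ is full-dimensional, so it contains, in its interior, a simplicial cone spanned by $\ell$ linearly independent vectors $g_{j,1},\ldots,g_{j,\ell}$. Let $A_{j}$ be the invertible matrix with $A_{j}^{-T}\boldsymbol{e}_{i}=g_{j,i}$. Then for every $\boldsymbol{u}$ with strictly positive coordinates, the vector $\boldsymbol{w}=A_{j}^{-T}\boldsymbol{u}$ lies in $\operatorname{int}N_{j}$, so $v_{j}$ is the unique maximizer of $\langle\boldsymbol{w},\cdot\rangle$ over the vertices of $Q$. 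Put
\[
X'=\bigcup_{j=1}^{t}\{v_{j}+\varepsilon A_{j}\boldsymbol{x}:\boldsymbol{x}\in X\}
\]
for a small $\varepsilon>0$ to be fixed. There are only finitely many subsets of $X$. Fix once and for all, for each subset, halfspaces as in the hypothesis; together they involve only finitely many normal vectors $\boldsymbol{u}$. So one $\varepsilon$ can be chosen that works for all of them; this also keeps the $t$ copies disjoint, giving $\abs{X'}=t\abs{X}$.

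\emph{Partitions for a colouring.} Fix $\chi:X'\to[2]$. For copy $j$, let $Y_{j}\subseteq X$ be the preimage of the colour-$1$ points of that copy. By hypothesis, $Y_{j}=X\cap\bigcup_{a=1}^{p_{j}}h_{j,a}$ with $p_{j}\le s$ and $h_{j,a}=\{\langle\boldsymbol{u}_{j,a},\boldsymbol{x}\rangle\le c_{j,a}\}$. Assign each colour-$1$ point of copy $j$ to one index $a$ with its preimage in $h_{j,a}$. Then let $X'_{1,a}$ be the union over $j$ of the points assigned to $a$, and let $X'_{2,j}$ be the colour-$2$ points of copy $j$; empty parts are allowed.

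\emph{Separation.} Fix $a$ and $b$; we must show $\conv(X'_{1,a})\cap\conv(X'_{2,b})=\emptyset$. If $a>p_{b}$, copy $b$ contributes nothing to $X'_{1,a}$; replace $h_{b,a}$ by any $h_{b,a'}$ in the following. Take $\boldsymbol{w}=A_{b}^{-T}\boldsymbol{u}_{b,a}$. For $y=v_{b}+\varepsilon A_{b}\boldsymbol{x}$ we have $\langle\boldsymbol{w},y\rangle=\langle\boldsymbol{w},v_{b}\rangle+\varepsilon\langle\boldsymbol{u}_{b,a},\boldsymbol{x}\rangle$.
\begin{itemize}
\item Colour-$2$ points of copy $b$ have preimages outside every $h_{b,a'}$, so $\langle\boldsymbol{w},y\rangle>\langle\boldsymbol{w},v_{b}\rangle+\varepsilon c_{b,a}$.
\item Points of $X'_{1,a}$ in copy $b$ satisfy $\langle\boldsymbol{w},y\rangle\le\langle\boldsymbol{w},v_{b}\rangle+\varepsilon c_{b,a}$.
\item Points in other copies $j\ne b$ satisfy $\langle\boldsymbol{w},y\rangle\le\langle\boldsymbol{w},v_{j}\rangle+O(\varepsilon)$. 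This is below $\langle\boldsymbol{w},v_{b}\rangle-O(\varepsilon)$ once $\varepsilon$ is small, because $v_{b}$ is the unique maximizer and the gap is uniform over the finitely many $\boldsymbol{w}$.
\end{itemize}
Hence the closed halfspace $\{\langle\boldsymbol{w},\cdot\rangle\le\langle\boldsymbol{w},v_{b}\rangle+\varepsilon c_{b,a}\}$ contains $X'_{1,a}$ while $X'_{2,b}$ lies in its open complement. So the convex hulls are disjoint, and $X'$ has property $\mathsf{B}(s,t)$.

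\emph{Main obstacle.} The key step is the choice of $A_{j}$, which turns the positivity of normals into exposure of the vertex $v_{j}$. Two points make this work. First, the positive orthant must be mapped into the interior of a full-dimensional normal cone, which is available because every vertex of a full-dimensional polytope has one. Second, the uniformity of $\varepsilon$ rests on there being only finitely many relevant normal vectors.
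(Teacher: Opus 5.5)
Your proof is correct and is essentially the paper's argument. The paper likewise anchors tiny copies \(\boldsymbol{v}_j+\varepsilon L_j^{-\top}X\) at distinct vertices of a polytope, with \(L_j\) mapping the positive orthant into \(\operatorname{int}N_Q(\boldsymbol{v}_j)\) (your \(A_j\) is its \(L_j^{-\top}\)). It groups colour \(1\) by halfspace index and colour \(2\) by copy, and separates with the same halfspace \(\{\boldsymbol{y}:\langle L_j\boldsymbol{u},\boldsymbol{y}-\boldsymbol{v}_j\rangle\le\varepsilon c\}\).

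The only slip is your fallback ``replace \(h_{b,a}\) by any \(h_{b,a'}\)'', which needs \(p_b\ge1\). The hypothesis allows \(\emptyset\) to be represented by zero halfspaces, in which case there is no \(h_{b,a'}\). Fix this as the paper does: pad every representation to exactly \(s\) halfspaces with one fixed halfspace that has a positive normal and contains no point of \(X\). Include that halfspace among the finitely many used to choose \(\varepsilon\).
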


\begin{proof}[Proof of Lemma~\ref{lem:two-part-amplification}]
The geometric idea is to anchor \(t\) tiny affine copies of \(X\) at distinct vertices of a polytope. For a halfspace that realizes a prescribed trace on one copy, we transform its normal vector into the interior of the normal cone at that vertex. The transformed halfspace cuts the designated copy exactly as prescribed and contains every other copy. We can therefore group the first color according to the original \(s\) halfspaces and group the second color according to the \(t\) host vertices.

Write \(h(\boldsymbol{u},c)=\{\boldsymbol{x}:\langle\boldsymbol{u},\boldsymbol{x}\rangle\le c\}\). For every subset \(Y\subseteq X\), choose one representation of \(Y\) by at most \(s\) halfspaces as in the assumption. If fewer than \(s\) are used, add copies of a fixed halfspace with coordinatewise positive normal vector that contains no point of \(X\). We may therefore assume that exactly \(s\) halfspaces have been chosen for every \(Y\). Since \(X\) is finite, only finitely many pairs \((\boldsymbol{u},c)\) occur in all these chosen representations.

To implement this idea, choose a full-dimensional convex polytope \(Q\subseteq\R^{\ell}\) with distinct vertices \(\boldsymbol{v}_{1},\ldots,\boldsymbol{v}_{t}\). For each \(j\in[t]\), define
\[
N_{Q}(\boldsymbol{v}_{j})=\{\boldsymbol{w}\in\R^{\ell}:\langle\boldsymbol{w},\boldsymbol{z}-\boldsymbol{v}_{j}\rangle\le0\text{ for every }\boldsymbol{z}\in Q\}.
\]
This is the set of vectors whose corresponding linear functions are maximized over \(Q\) at \(\boldsymbol{v}_{j}\). Since \(\boldsymbol{v}_{j}\) is a vertex, the outward normal vectors of the facets containing it span \(\R^{\ell}\), their positive combinations show that \(N_{Q}(\boldsymbol{v}_{j})\) has nonempty interior. Choose linearly independent vectors \(\boldsymbol{w}_{j,1},\ldots,\boldsymbol{w}_{j,\ell}\) in this interior, and let \(L_{j}\) be the invertible linear map satisfying \(L_{j}\boldsymbol{e}_{b}=\boldsymbol{w}_{j,b}\) for every \(b\in[\ell]\). If every coordinate of \(\boldsymbol{u}\) is positive, then
\[
L_{j}\boldsymbol{u}=\sum_{b=1}^{\ell}u_{b}\boldsymbol{w}_{j,b}\in\operatorname{int}N_{Q}(\boldsymbol{v}_{j}).
\]
Here, \(\operatorname{int}N_{Q}(\boldsymbol{v}_{j})\) denotes the
interior of the normal cone in \(\R^{\ell}\). In particular, \(\boldsymbol{v}_{j}\) is the unique vertex of \(Q\) at which the linear function with normal vector \(L_{j}\boldsymbol{u}\) is maximized.

For \(\varepsilon>0\), put
\[
X^{(j)}=\{\boldsymbol{v}_{j}+\varepsilon L_{j}^{-\top}\boldsymbol{x}:\boldsymbol{x}\in X\}.
\]
Here \(L_{j}^{-\top}\) denotes the inverse transpose of \(L_{j}\).
For every chosen halfspace \(h(\boldsymbol{u},c)\), define
\[
\widehat H_{j}(\boldsymbol{u},c)=\{\boldsymbol{y}\in\R^{\ell}:\langle L_{j}\boldsymbol{u},\boldsymbol{y}-\boldsymbol{v}_{j}\rangle\le\varepsilon c\}.
\]
If \(\boldsymbol{y}=\boldsymbol{v}_{j}+\varepsilon L_{j}^{-\top}\boldsymbol{x}\), then
\[
\langle L_{j}\boldsymbol{u},\boldsymbol{y}-\boldsymbol{v}_{j}\rangle=\varepsilon\langle\boldsymbol{u},\boldsymbol{x}\rangle.
\]
Thus \(\widehat H_{j}(\boldsymbol{u},c)\) has exactly the same intersection with \(X^{(j)}\) as \(h(\boldsymbol{u},c)\) has with \(X\), under the affine identification between these two sets.

The same halfspace also contains every other copy when \(\varepsilon\) is sufficiently small. Indeed, if \(b\ne j\), then \(L_{j}\boldsymbol{u}\in\operatorname{int}N_{Q}(\boldsymbol{v}_{j})\) gives
\(
\langle L_{j}\boldsymbol{u},\boldsymbol{v}_{b}-\boldsymbol{v}_{j}\rangle<0.
\)
Consequently, for every \(\boldsymbol{x}\in X\), one has \(\boldsymbol{v}_{b}+\varepsilon L_{b}^{-\top}\boldsymbol{x}\in\widehat H_{j}(\boldsymbol{u},c)\) once \(\varepsilon>0\) is small enough. There are only finitely many vertices, points, and chosen halfspaces, so one value of \(\varepsilon\) works simultaneously for all of them. We also take it small enough that the sets \(X^{(1)},\ldots,X^{(t)}\) are pairwise disjoint, and put \(X'=X^{(1)}\sqcup\cdots\sqcup X^{(t)}\).

Fix a map \(\chi:X'\to[2]\), and put \(A=\chi^{-1}(1)\) and \(B=\chi^{-1}(2)\). For each \(j\in[t]\), pull the set \(A\cap X^{(j)}\) back to a subset of \(X\), and use its chosen representation by \(s\) halfspaces. Let \(\widehat H_{j,1},\ldots,\widehat H_{j,s}\) be the corresponding halfspaces in \(\R^{\ell}\). Their union contains exactly the points of \(A\cap X^{(j)}\) inside the \(j\)-th copy. Assign each such point to one halfspace that contains it, and let \(A_{j,a}\) be the set assigned to \(\widehat H_{j,a}\). Define \(A_{a}=\bigcup_{j=1}^{t}A_{j,a}\) for \(a\in[s]\), and define \(B_{j}=B\cap X^{(j)}\) for \(j\in[t]\). These sets partition \(A\) and \(B\), respectively.

The following claim verifies the required component-by-component disjointness.

\begin{claim}\label{claim:two-part-disjoint}
For every \(a\in[s]\) and \(j\in[t]\), one has \(\operatorname{conv}(A_{a})\cap\operatorname{conv}(B_{j})=\emptyset\).
\end{claim}

\begin{poc}
By construction, \(A_{j,a}\subseteq\widehat H_{j,a}\). Moreover, every copy \(X^{(b)}\) with \(b\ne j\) is contained in \(\widehat H_{j,a}\). Hence \(A_{a}\subseteq\widehat H_{j,a}\), and the convexity of this halfspace gives \(\operatorname{conv}(A_{a})\subseteq\widehat H_{j,a}\).

Inside \(X^{(j)}\), the union of \(\widehat H_{j,1},\ldots,\widehat H_{j,s}\) contains exactly the points of \(A\cap X^{(j)}\). Therefore every point of \(B_{j}\) lies in the open halfspace \(\R^{\ell}\setminus\widehat H_{j,a}\). This open halfspace is convex, so it also contains \(\operatorname{conv}(B_{j})\). The two convex hulls are therefore disjoint.
\end{poc}

Claim \ref{claim:two-part-disjoint} says exactly that the partitions \(A=A_{1}\sqcup\cdots\sqcup A_{s}\) and \(B=B_{1}\sqcup\cdots\sqcup B_{t}\) satisfy property \(\mathsf{B}(s,t)\).
\end{proof}

We now use a separate lifting operation. It places affine copies of a point set having property \(\mathsf{B}\) inside facets of a polytope. The exposed-face property ensures that a convex combination lying on one chosen facet can use points only from the copy on that facet, which is exactly what allows one more part to be introduced.

\begin{lemma}\label{lem:facet-lifting}
Suppose that a finite set \(X\subseteq\R^{d}\) has property \(\mathsf{B}(s_{1},\ldots,s_{k})\). For every integer \(t\ge1\), there is a set \(X'\subseteq\R^{d+1}\) of \(t\abs{X}\) points having property \(\mathsf{B}(s_{1},\ldots,s_{k},t)\).
\end{lemma}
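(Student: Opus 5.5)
Take a full-dimensional polytope \(Q\subseteq\R^{d+1}\) with at least \(t\) facets \(F_{1},\ldots,F_{t}\). Let \(\phi_{j}:\R^{d}\to\operatorname{aff}(F_{j})\) be an injective affine map sending \(X\) into the relative interior of \(F_{j}\), and scale the copies \(X^{(j)}=\phi_{j}(X)\) so that they are pairwise disjoint. For example, \(Q\) could be a simplex-like polytope, or any polytope with \(t\) facets. Put \(X'=X^{(1)}\sqcup\cdots\sqcup X^{(t)}\), so that \(\abs{X'}=t\abs{X}\). Each facet is exposed: there is an affine function \(\psi_{j}\) with \(\psi_{j}\le0\) on \(Q\) and \(\psi_{j}^{-1}(0)\cap Q=F_{j}\).

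Now fix \(\chi:X'\to[k+1]\). Let the new part \(k+1\) be split by copies, \(X'_{k+1,j}=\chi^{-1}(k+1)\cap X^{(j)}\) for \(j\in[t]\). For each copy \(j\), the restriction of \(\chi\) to \(X^{(j)}\setminus\chi^{-1}(k+1)\) pulls back to a map on a subset of \(X\). Extend it arbitrarily to a map \(X\to[k]\), apply property \(\mathsf{B}(s_{1},\ldots,s_{k})\), and obtain partitions \(X_{i,1}^{(j)}\sqcup\cdots\sqcup X_{i,s_{i}}^{(j)}\). Intersect these with the points actually colored \(i\); dropping points preserves disjointness of convex hulls. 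Then define \(X'_{i,a}=\bigcup_{j}\phi_{j}(X^{(j)}_{i,a})\) for \(i\in[k]\), \(a\in[s_{i}]\). These are genuine partitions of each color class.

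To verify the property, fix \(\boldsymbol{a}\) and \(j=a_{k+1}\), and suppose \(\boldsymbol{y}\in\bigcap_{i\le k}\conv(X'_{i,a_{i}})\cap\conv(X'_{k+1,j})\). Since \(\conv(X'_{k+1,j})\subseteq F_{j}\), we have \(\psi_{j}(\boldsymbol{y})=0\). Write \(\boldsymbol{y}\) as a convex combination of points of \(X'_{i,a_{i}}\subseteq Q\). Because \(\psi_{j}\le0\) on \(Q\) and vanishes only on \(F_{j}\), every point with positive weight lies on \(F_{j}\). The main point to check is that \(X'\cap F_{j}=X^{(j)}\). This holds because the other copies lie in the relative interiors of other facets, and the relative interior of a facet meets no other facet. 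Hence \(\boldsymbol{y}\in\conv(\phi_{j}(X^{(j)}_{i,a_{i}}))\) for every \(i\le k\). Pulling back by the affine bijection \(\phi_{j}\) onto \(\operatorname{aff}(F_{j})\), which commutes with convex hulls, gives a point in \(\bigcap_{i\le k}\conv(X^{(j)}_{i,a_{i}})\). This contradicts property \(\mathsf{B}(s_{1},\ldots,s_{k})\) for the \(j\)-th copy.

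The main obstacle is handling empty components carefully. If some \(X'_{i,a_{i}}\) is empty, the intersection is empty trivially, so nothing is lost. Also, the case \(t>\) (number of facets available) is avoided by choosing \(Q\) with enough facets, for instance a polygon prism. This is possible since \(d+1\ge2\), and when \(d+1=1\) we would need \(t\le2\) facets; the construction in that case is fine for the intended applications but would need a remark.
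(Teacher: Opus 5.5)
Your proposal is correct and takes essentially the same approach as the paper: place copies of $X$ in the relative interiors of $t$ facets, apply property $\mathsf{B}(s_{1},\ldots,s_{k})$ copy by copy after temporarily reassigning the new color (the paper sends it to part $1$, you extend arbitrarily; both work), split part $k+1$ by facet, and use the exposing functional of $F_{j}$ to force any common point into $\operatorname{conv}$ of pieces from the single copy $X^{(j)}$. One small slip: a simplex in $\R^{d+1}$ has only $d+2$ facets, so you need a polytope with at least $t$ facets, such as the prism over a $t$-gon you mention.
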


\begin{proof}[Proof of Lemma~\ref{lem:facet-lifting}]
Choose a full-dimensional convex polytope \(Q\subseteq\R^{d+1}\) with distinct facets \(F_{1},\ldots,F_{t}\), and place an affine copy \(X^{(j)}\) of \(X\) inside each \(F_{j}\), away from its boundary. Set \(X'=X^{(1)}\sqcup\cdots\sqcup X^{(t)}\).

Fix a map \(\chi:X'\to[k+1]\). On each copy \(X^{(j)}\), temporarily move every point in part \(k+1\) to part \(1\). This gives a map from \(X^{(j)}\) to \([k]\), so property \(\mathsf{B}(s_{1},\ldots,s_{k})\) gives partitions into sets \(Y^{(j)}_{i,a}\) with empty intersection for every choice of one component from each of the \(k\) parts.

We now remove the temporarily moved points from the first part. Put \(Z^{(j)}_{1,a}=Y^{(j)}_{1,a}\cap\chi^{-1}(1)\), and put \(Z^{(j)}_{i,a}=Y^{(j)}_{i,a}\) for \(2\le i\le k\). These sets partition the points of the original parts \(1,\ldots,k\) inside \(X^{(j)}\). Since each convex hull has only been made smaller, for every \((a_{1},\ldots,a_{k})\in\prod_{i=1}^{k}[s_{i}]\),
\[
\bigcap_{i=1}^{k}\operatorname{conv}(Z^{(j)}_{i,a_{i}})\subseteq\bigcap_{i=1}^{k}\operatorname{conv}(Y^{(j)}_{i,a_{i}})=\emptyset.
\]

For the first \(k\) parts, join the corresponding sets from all facets by defining \(Z_{i,a}=\bigcup_{j=1}^{t}Z^{(j)}_{i,a}\). For the new part, use one set on each facet: define \(Z_{k+1,j}=\chi^{-1}(k+1)\cap X^{(j)}\). These sets give the required partitions of all \(k+1\) parts.

It remains to check the intersections of their convex hulls. Fix \(a_{i}\in[s_{i}]\) for \(i\in[k]\) and \(j\in[t]\), and suppose that
\[
\boldsymbol{z}\in\bigcap_{i=1}^{k}\operatorname{conv}(Z_{i,a_{i}})\cap\operatorname{conv}(Z_{k+1,j}).
\]
The last convex hull is contained in \(F_{j}\), so \(\boldsymbol{z}\in F_{j}\). There is an affine function \(\lambda_{j}:\R^{d+1}\to\R\) such that \(\lambda_{j}\le1\) on \(Q\) and equality holds precisely on \(F_{j}\). Thus every point of \(X^{(j)}\) has \(\lambda_{j}\)-value \(1\), while every point in the other copies has value strictly less than \(1\). For each \(i\in[k]\), expressing \(\boldsymbol{z}\) as a convex combination of points of \(Z_{i,a_{i}}\) and applying \(\lambda_{j}\) shows that only points from \(X^{(j)}\) can have positive coefficients. Hence \(\boldsymbol{z}\in\operatorname{conv}(Z^{(j)}_{i,a_{i}})\) for every \(i\in[k]\), contradicting the empty intersection proved above. Therefore \(X'\) has property \(\mathsf{B}(s_{1},\ldots,s_{k},t)\).
\end{proof}

We can now prove the sharp lower bound.

\begin{proof}[Proof of Theorem~\ref{thm:lower}]
Set \(\ell=d-r+2\), so \(\ell\ge4\). We apply Lemma~\ref{lem:polytope-vc} and obtain a set \(X\subseteq\R^{\ell}\) with
\[
\abs{X}\ge c_{0}\ell s\log(s+1)
\]
such that every subset is realized by a union of at most \(s\) halfspaces with coordinatewise positive normal vectors. We apply Lemma~\ref{lem:two-part-amplification} with \(t=s\) and obtain a set in the same space \(\R^{\ell}\) having property \(\mathsf{B}(s,s)\) and containing \(s\abs{X}\) points. We then apply Lemma~\ref{lem:facet-lifting} \(r-2\) times, taking \(t=s\) each time. The resulting set \(X_{r}\) lies in
\(
\R^{\ell+r-2}=\R^{d},
\)
has property \(\mathsf{B}(s,\ldots,s)\), and satisfies
\[
\abs{X_{r}}=s^{r-1}\abs{X}\ge c_{0}(d-r+2)s^{r}\log(s+1).
\]
As observed at the beginning of the section, every partition of \(X_{r}\) into \(r\) nonempty parts is bad. Therefore \(f_{r}(d,s,\ldots,s)>\abs{X_{r}}\), and the theorem follows with \(c=c_{0}\).
\end{proof}

Finally, we apply the lifting lemma to the planar construction in Lemma~\ref{lem:planar-bad-set}.

\begin{proof}[Proof of Theorem~\ref{thm:power-lower}]
Put \(k=\min\{r,d\}\). We apply Lemma~\ref{lem:planar-bad-set} and obtain a set of \(s^{2}\) points in \(\R^{2}\) having property \(\mathsf{B}(s,s)\). Applying Lemma~\ref{lem:facet-lifting} \(k-2\) times, with \(t=s\) at every step, gives a set \(X\subseteq\R^{k}\) of \(s^{k}\) points having property \(\mathsf{B}(s,\ldots,s)\) for \(k\) parts. We may regard \(X\) as a subset of \(\R^{d}\).

If \(r\le d\), then \(k=r\), and the definition of property \(\mathsf{B}\) gives \(f_{r}(d,s,\ldots,s)>s^{r}\). It remains to consider \(r>d\), so \(k=d\). Consider any surjective map \(\chi:X\to[r]\), if no such map exists, then \(X\) is already a bad example. Choose a surjection \(\phi:[r]\to[d]\), and merge the \(r\) parts into \(d\) parts by setting \(Y_{j}=\bigcup_{i\in\phi^{-1}(j)}\chi^{-1}(i)\). Property \(\mathsf{B}(s,\ldots,s)\) gives \(s\)-convex sets \(C_{1},\ldots,C_{d}\) with \(Y_{j}\subseteq C_{j}\) and \(\bigcap_{j=1}^{d}C_{j}=\emptyset\). For each \(i\in[r]\), set \(D_{i}=C_{\phi(i)}\). Then \(D_{i}\) is \(s\)-convex and contains \(\chi^{-1}(i)\). Since \(\phi\) is surjective,
\[
\bigcap_{i=1}^{r}D_{i}=\bigcap_{j=1}^{d}C_{j}=\emptyset.
\]
Thus every \(r\)-partition of \(X\) is bad, and \(f_{r}(d,s,\ldots,s)>\abs{X}=s^{d}\).
\end{proof}

\section{General upper bounds}
\subsection{Proof of Theorem~\ref{thm:general-upper}}
We now give the proof of Theorem~\ref{thm:general-upper}. To make the two counting constructions self-contained, we recall the notation used throughout this section. Put \(\boldsymbol{s}=(s_{1},\ldots,s_{r})\), \(S(\boldsymbol{s})=\prod_{i=1}^{r}s_{i}\), and \(q=\min\{r,d+1\}\). We also write
\[
H_{d,r}(\boldsymbol{s})=\sum_{\substack{\emptyset\ne I\subseteq[r]\\\abs{I}\le d+1}}\abs{I}\prod_{i\in I}s_{i}
\]
and \(T_{d,r}(\boldsymbol{s})=\min\{qS(\boldsymbol{s}),H_{d,r}(\boldsymbol{s})\}\). When a polyhedron is presented as an intersection of closed halfspaces, each listed proper halfspace is one \emph{halfspace occurrence}. Repetitions in different polyhedra count separately, while a factor equal to \(\R^{d}\) contributes no occurrence.

For completeness, we first derive the familiar trace bound for halfspaces from Radon's theorem and the Sauer--Shelah lemma.

\begin{lemma}\label{lem:halfspace-subsets}
Let \(P\subseteq\R^{d}\) have \(m\) points, and let \(\tau_{d}(P)\) be the number of subsets of the form \(P\cap H\), where \(H\) is a closed halfspace. Then \(\tau_{d}(P)\le\sum_{j=0}^{d+1}\binom{m}{j}\). If \(m\ge d+1\), then \(\tau_{d}(P)\le \left(\frac{em}{d+1}\right)^{d+1}\).
\end{lemma}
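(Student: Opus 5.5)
The plan is to view the traces of closed halfspaces on \(P\) as a set system and bound its VC-dimension. Let \(\mathcal{F}=\{P\cap H: H \text{ a closed halfspace}\}\), a family of subsets of the \(m\)-element ground set \(P\), so that \(\tau_{d}(P)=\abs{\mathcal{F}}\). By the convention of this section, \(\R^{d}\) itself counts as a closed halfspace, so \(P\in\mathcal{F}\); this has no effect on the argument.

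First I will show \(\VC(\mathcal{F})\le d+1\) using Lemma~\ref{lem:radon}. Suppose some \(Y\subseteq P\) with \(\abs{Y}=d+2\) were shattered. Radon's theorem gives a partition \(Y=Y_{1}\sqcup Y_{2}\) into nonempty parts with \(\conv(Y_{1})\cap\conv(Y_{2})\ne\emptyset\). Shattering provides a closed halfspace \(H\) with \(H\cap Y=Y_{1}\). Then \(\conv(Y_{1})\subseteq H\) by convexity. Every point of \(Y_{2}\) lies in the open complement \(\R^{d}\setminus H\), which is also convex, so \(\conv(Y_{2})\subseteq\R^{d}\setminus H\). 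The two hulls are therefore disjoint, which is a contradiction. Hence no \((d+2)\)-set is shattered, and Lemma~\ref{lem:sauer} with \(k=d+1\) gives \(\tau_{d}(P)\le\sum_{j=0}^{d+1}\binom{m}{j}\).

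For the second bound I will use the standard estimate \(\sum_{j=0}^{k}\binom{m}{j}\le(em/k)^{k}\), valid for \(m\ge k\ge1\), applied with \(k=d+1\). The proof takes \(x=k/m\in(0,1]\). Then
\[
\sum_{j=0}^{k}\binom{m}{j}x^{k}\le\sum_{j=0}^{k}\binom{m}{j}x^{j}\le(1+x)^{m}\le e^{xm}=e^{k}.
\]
Dividing by \(x^{k}\) gives \(\sum_{j=0}^{k}\binom{m}{j}\le e^{k}(m/k)^{k}=(em/k)^{k}\). The first inequality in the display uses \(x\le1\), and this is exactly where the hypothesis \(m\ge d+1\) enters.

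There is no real obstacle here. The only points requiring care are that the complement of a closed halfspace is convex, which the Radon step uses, and that the degenerate halfspace \(\R^{d}\) does not affect the shattering argument.
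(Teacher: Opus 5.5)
Your proof is correct and follows essentially the same route as the paper. Radon's theorem rules out shattering a $(d+2)$-point set, Sauer--Shelah then gives the first bound, and the estimate $\sum_{j=0}^{k}\binom{m}{j}\le (em/k)^{k}$ with $k=d+1$ gives the second; the only difference is that you prove this estimate, which the paper simply quotes.
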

\begin{proof}[Proof of Lemma~\ref{lem:halfspace-subsets}]
We first apply Radon's theorem in Lemma~\ref{lem:radon} to see that closed halfspaces in \(\R^{d}\) shatter no set of size \(d+2\). Indeed, split any \(d+2\) points into two nonempty parts \(A\) and \(B\) with \(\operatorname{conv}(A)\cap\operatorname{conv}(B)\ne\emptyset\). If a closed halfspace contained exactly the points of \(A\), its open complement would contain the points of \(B\). Convexity would then put \(\operatorname{conv}(A)\) in the halfspace and \(\operatorname{conv}(B)\) in its complement, a contradiction. We now apply the Sauer--Shelah lemma in Lemma~\ref{lem:sauer} to obtain the first bound. The second follows from \(\sum_{j=0}^{v}\binom{m}{j}\le \left(\frac{em}{v}\right)^{v}\) with \(v=d+1\).
\end{proof}

A map \(\chi:P\to [r]\) is called \emph{bad} if, with \(P_{i}=\chi^{-1}(i)\), there are \(s_{i}\)-convex sets \(C_{i}\supseteq P_{i}\) such that \(\bigcap_{i=1}^{r}C_{i}=\emptyset\). We use Lemma~\ref{lem:component-reduction} to replace the components of these sets by compact convex hulls of assigned points.

We next turn those compact convex sets into polyhedra. Here is the underlying mechanism. For each full component choice, Helly's theorem retains at most \(q\) part indices that still witness emptiness, and simultaneous separation replaces the corresponding bodies by halfspaces with empty intersection. We then intersect all halfspaces attached to the same component. There are two ways to organize this information: the \emph{full-tuple alternative} treats all \(S(\boldsymbol{s})\) full choices separately and costs at most \(qS(\boldsymbol{s})\) occurrences, the \emph{sparse-subtuple alternative} pre-enumerates every choice on at most \(d+1\) part indices and costs at most \(H_{d,r}(\boldsymbol{s})\) occurrences.

\begin{lemma}\label{lem:polyhedral-reduction}
Let \(P_{1},\ldots,P_{r}\) be nonempty finite subsets of \(\R^{d}\). Suppose that, for every \(i\in [r]\) and \(a\in [s_{i}]\), there is a nonempty compact convex set \(D_{i,a}\) such that \(P_{i}\subseteq\bigcup_{a=1}^{s_{i}}D_{i,a}\), and for every tuple \(\boldsymbol{a}=(a_{1},\ldots,a_{r})\in\prod_{i=1}^{r}[s_{i}]\) one has \(\bigcap_{i=1}^{r}D_{i,a_{i}}=\emptyset\). Then one can choose sets \(Q_{i}\supseteq P_{i}\) such that \(\bigcap_{i=1}^{r}Q_{i}=\emptyset\), each \(Q_{i}\) is a union of at most \(s_{i}\) convex polyhedra, and at most \(T_{d,r}(\boldsymbol{s})\) halfspace occurrences are used. More precisely, the full-tuple alternative uses at most \(qS(\boldsymbol{s})\) occurrences and, for every such \(\boldsymbol{a}\), specifies a set \(I(\boldsymbol{a})\subseteq[r]\) with \(1\le\abs{I(\boldsymbol{a})}\le q\) and halfspaces \(H_{i,\boldsymbol{a}}\supseteq D_{i,a_{i}}\), \(i\in I(\boldsymbol{a})\), such that both \(\bigcap_{i\in I(\boldsymbol{a})}D_{i,a_{i}}\) and \(\bigcap_{i\in I(\boldsymbol{a})}H_{i,\boldsymbol{a}}\) are empty. The sparse-subtuple alternative uses at most \(H_{d,r}(\boldsymbol{s})\) occurrences.
\end{lemma}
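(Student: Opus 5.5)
We prove the two alternatives separately and then take whichever uses fewer occurrences, so that the total is at most \(T_{d,r}(\boldsymbol{s})\). In both, the polyhedron for component \((i,a)\) is
\[
Q_{i,a}=\bigcap\{H : H \text{ a halfspace attached to } (i,a)\},
\]
where an empty intersection means \(\R^{d}\). We then put \(Q_{i}=\bigcup_{a}Q_{i,a}\). Containment \(Q_{i}\supseteq P_{i}\) is automatic, since every attached halfspace contains \(D_{i,a}\), hence \(Q_{i,a}\supseteq D_{i,a}\) and \(\bigcup_a D_{i,a}\supseteq P_{i}\).

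For emptiness, observe that \(\bigcap_i Q_i=\bigcup_{\boldsymbol{a}}\bigcap_i Q_{i,a_i}\). It therefore suffices that for each full choice \(\boldsymbol{a}\), some halfspaces attached to the components \((i,a_i)\) already have empty intersection.

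\emph{Full-tuple alternative.} Fix \(\boldsymbol{a}\). The family \(\{D_{i,a_i}\}_{i\in[r]}\) has empty intersection, so Lemma~\ref{lem:helly} gives a subfamily indexed by \(I(\boldsymbol{a})\) with \(|I(\boldsymbol{a})|\le d+1\) and empty intersection. Trivially also \(|I(\boldsymbol{a})|\le r\), so \(|I(\boldsymbol{a})|\le q\). It is nonempty because the sets \(D_{i,a}\) are nonempty: the empty intersection of no sets would be \(\R^d\).

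Apply Lemma~\ref{lem:separation} to the nonempty compact convex sets \(D_{i,a_i}\), \(i\in I(\boldsymbol{a})\). This yields closed halfspaces \(H_{i,\boldsymbol{a}}\supseteq D_{i,a_i}\) with empty intersection. Attach \(H_{i,\boldsymbol{a}}\) to component \((i,a_i)\).

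A halfspace equal to \(\R^d\) costs nothing, so each \(\boldsymbol{a}\) contributes at most \(q\) occurrences, for a total of at most \(qS(\boldsymbol{s})\). Emptiness holds for every \(\boldsymbol{a}\), since \(\bigcap_i Q_{i,a_i}\subseteq\bigcap_{i\in I(\boldsymbol{a})}H_{i,\boldsymbol{a}}=\emptyset\). This also gives the more precise statement.

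\emph{Sparse-subtuple alternative.} Here we index by partial choices instead. Consider every pair \((I,\boldsymbol{b})\) with \(\emptyset\ne I\subseteq[r]\), \(|I|\le d+1\), \(\boldsymbol{b}\in\prod_{i\in I}[s_i]\), and \(\bigcap_{i\in I}D_{i,b_i}=\emptyset\). For each such pair, apply Lemma~\ref{lem:separation} to get halfspaces \(H_{i,I,\boldsymbol{b}}\supseteq D_{i,b_i}\) with empty intersection, and attach \(H_{i,I,\boldsymbol{b}}\) to component \((i,b_i)\). Pairs with nonempty intersection receive nothing.

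The count is at most \(\sum_{I}|I|\prod_{i\in I}s_i=H_{d,r}(\boldsymbol{s})\). For emptiness, take any full \(\boldsymbol{a}\). Helly gives some \(I\) with \(|I|\le\min\{r,d+1\}\) and \(\bigcap_{i\in I}D_{i,a_i}=\emptyset\). The restriction \(\boldsymbol{b}=\boldsymbol{a}|_I\) is then an enumerated pair, so \(\bigcap_iQ_{i,a_i}\subseteq\bigcap_{i\in I}H_{i,I,\boldsymbol{b}}=\emptyset\).

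The points needing care are bookkeeping rather than depth. First, the halfspaces from Lemma~\ref{lem:separation} may equal \(\R^d\); this is allowed and costs no occurrence. Second, each \(Q_{i,a}\) is a genuine convex polyhedron, being a finite intersection of closed halfspaces. Third, the number of polyhedra in \(Q_i\) is at most \(s_i\). No geometric obstacle remains once Helly and Gale--Klee separation are available; the main point is simply that emptiness is witnessed per full choice by halfspaces attached only to the components used in that choice.
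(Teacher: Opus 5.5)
Your proposal is correct and follows the paper's proof essentially step for step. For each full choice \(\boldsymbol{a}\) you take a Helly witness \(I(\boldsymbol{a})\) and Gale--Klee halfspaces attached to the components \((i,a_i)\); in the sparse version you pre-enumerate all sub-choices on at most \(d+1\) indices, where your ``receive nothing'' is the paper's \(K_{i,I,\boldsymbol{a}_I}=\R^d\); and you keep whichever construction uses fewer occurrences.
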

\begin{proof}[Proof of Lemma~\ref{lem:polyhedral-reduction}]
There are two useful ways to turn the sets \(D_{i,a}\) into halfspaces. We begin with the full-tuple alternative described above.

    \begin{claim}\label{claim:full-tuple-construction}
There are sets \(Q_{1},\ldots,Q_{r}\) satisfying the conclusion of the lemma with at most \(qS(\boldsymbol{s})\) halfspace occurrences. For every full component choice \(\boldsymbol{a}\), the construction also specifies a set \(I(\boldsymbol{a})\) and halfspaces \(H_{i,\boldsymbol{a}}\), \(i\in I(\boldsymbol{a})\), satisfying all the witness conditions in the statement of the lemma.
\end{claim}
\begin{poc}
Fix a full component choice \(\boldsymbol{a}\). By assumption, the sets \(D_{1,a_{1}},\ldots,D_{r,a_{r}}\) have empty intersection. We apply Helly's theorem in Lemma~\ref{lem:helly} and choose \(I(\boldsymbol{a})\subseteq [r]\) with \(1\le\abs{I(\boldsymbol{a})}\le q\) and \(\bigcap_{i\in I(\boldsymbol{a})}D_{i,a_{i}}=\emptyset\). We then apply the simultaneous separation statement in Lemma~\ref{lem:separation} and choose halfspaces \(H_{i,\boldsymbol{a}}\supseteq D_{i,a_{i}}\), \(i\in I(\boldsymbol{a})\), whose intersection is empty.

For fixed \(i\) and \(a\), define \(Q_{i,a}\) as the intersection of all halfspaces \(H_{i,\boldsymbol{b}}\) with \(b_{i}=a\) and \(i\in I(\boldsymbol{b})\). If there is no such halfspace, set \(Q_{i,a}=\R^{d}\). Finally put \(Q_{i}=\bigcup_{a=1}^{s_{i}}Q_{i,a}\). Each \(Q_{i}\) is a union of at most \(s_{i}\) convex polyhedra.

We need to check that \(P_{i}\subseteq Q_{i}\) first. Take \(\boldsymbol{p}\in P_{i}\). By assumption, \(\boldsymbol{p}\) lies in at least one of the sets \(D_{i,a}\), fix such an \(a\). Every halfspace used in the definition of \(Q_{i,a}\) contains \(D_{i,a}\), so it contains \(\boldsymbol{p}\). Thus \(\boldsymbol{p}\in Q_{i,a}\subseteq Q_{i}\). Secondly, we check that \(\bigcap_{i=1}^{r}Q_{i}=\emptyset\). If \(\boldsymbol{x}\) belonged to all \(Q_{i}\), then for each \(i\) we could choose \(a_{i}\) with \(\boldsymbol{x}\in Q_{i,a_{i}}\). Let \(\boldsymbol{a}=(a_{1},\ldots,a_{r})\). For every \(i\in I(\boldsymbol{a})\), the halfspace \(H_{i,\boldsymbol{a}}\) appears in the intersection defining \(Q_{i,a_{i}}\). Hence \(\boldsymbol{x}\) lies in all the halfspaces \(H_{i,\boldsymbol{a}}\), \(i\in I(\boldsymbol{a})\), contradicting how those halfspaces were chosen.

Thus the construction uses at most \(\sum_{\boldsymbol{a}}\abs{I(\boldsymbol{a})}\le qS(\boldsymbol{s})\) halfspace occurrences, and the chosen sets \(I(\boldsymbol{a})\) and halfspaces have all the required witness properties.
\end{poc}

The second construction lists only small sets \(I\subseteq [r]\) from the start. It is useful when \(r\) is large, because then we do not need to store information indexed by all \(r\) parts at once.

\begin{claim}\label{claim:sparse-construction}
There are sets \(Q_{1},\ldots,Q_{r}\) such that \(P_{i}\subseteq Q_{i}\) for every \(i\), \(\bigcap_{i=1}^{r}Q_{i}=\emptyset\), each \(Q_{i}\) is a union of at most \(s_{i}\) convex polyhedra, and the total number of halfspace occurrences is at most \(H_{d,r}(\boldsymbol{s})\).
\end{claim}
\begin{poc}
For every nonempty \(I\subseteq [r]\) with \(\abs{I}\le d+1\) and every component choice \(\boldsymbol{a}_{I}=(a_{i})_{i\in I}\in\prod_{i\in I}[s_{i}]\), define sets \(K_{i,I,\boldsymbol{a}_{I}}\), \(i\in I\), as follows. If \(\bigcap_{i\in I}D_{i,a_{i}}=\emptyset\), we apply Lemma~\ref{lem:separation} and obtain closed halfspaces \(K_{i,I,\boldsymbol{a}_{I}}\supseteq D_{i,a_{i}}\), \(i\in I\), with empty intersection. If \(\bigcap_{i\in I}D_{i,a_{i}}\ne\emptyset\), set \(K_{i,I,\boldsymbol{a}_{I}}=\R^{d}\) for every \(i\in I\). These sets impose no restriction and contribute no halfspace occurrence.

For fixed \(i\) and \(a\), define \(Q_{i,a}\) as the intersection of all sets \(K_{i,I,\boldsymbol{a}_{I}}\) with \(i\in I\) and \(a_{i}=a\), and set \(Q_{i}=\bigcup_{a=1}^{s_{i}}Q_{i,a}\). Each \(Q_{i}\) is a union of at most \(s_{i}\) convex polyhedra. Also \(P_{i}\subseteq Q_{i}\), because every factor in the intersection defining \(Q_{i,a}\) contains \(D_{i,a}\) when it is a halfspace, and equals \(\R^{d}\) otherwise.

It remains to see that the \(Q_{i}\) have empty total intersection. Suppose \(\boldsymbol{x}\in\bigcap_{i=1}^{r}Q_{i}\). Choose \(a_{i}\) with \(\boldsymbol{x}\in Q_{i,a_{i}}\) for each \(i\). By assumption, the full intersection \(\bigcap_{i=1}^{r}D_{i,a_{i}}\) is empty. We apply Helly's theorem in Lemma~\ref{lem:helly} and obtain a set \(I\subseteq [r]\) with \(\abs{I}\le d+1\) and \(\bigcap_{i\in I}D_{i,a_{i}}=\emptyset\). For \(\boldsymbol{a}_{I}=(a_{i})_{i\in I}\), the sets \(K_{i,I,\boldsymbol{a}_{I}}\), \(i\in I\), are closed halfspaces with empty intersection. Since \(\boldsymbol{x}\in Q_{i,a_{i}}\) for every \(i\in I\), the point \(\boldsymbol{x}\) lies in all these halfspaces, a contradiction.

Moreover, the number of halfspace occurrences is at most the number of possible triples \((I,\boldsymbol{a}_{I},i)\), namely \(H_{d,r}(\boldsymbol{s})\).
\end{poc}
We use Claim~\ref{claim:full-tuple-construction} for the full-tuple alternative and Claim~\ref{claim:sparse-construction} for the sparse-subtuple alternative. We choose the first when \(qS(\boldsymbol{s})\le H_{d,r}(\boldsymbol{s})\) and the second otherwise. The chosen construction uses at most \(T_{d,r}(\boldsymbol{s})\) halfspace occurrences, which proves the lemma.
\end{proof}
We now record only the information that can be seen on the fixed point set \(P\). A halfspace \(H\) is recorded by the subset \(P\cap H\), and we use \(\mathcal{D}\) to denote the record. In the sparse-subtuple alternative, it is the list of subsets \(P\cap K_{i,I,\boldsymbol{a}_{I}}\), one for every triple \((I,\boldsymbol{a}_{I},i)\) with \(\emptyset\ne I\subseteq [r]\), \(\abs{I}\le d+1\), \(\boldsymbol{a}_{I}\in\prod_{j\in I}[s_{j}]\), and \(i\in I\). In the full-tuple alternative, it is the pair
\[
\mathcal{D}=\left(\omega,\left(P\cap H_{i,\boldsymbol{a}}\right)_{\boldsymbol{a}\in\prod_{j=1}^{r}[s_{j}],i\in\omega(\boldsymbol{a})}\right),
\]
where \(\omega(\boldsymbol{a})=I(\boldsymbol{a})\subseteq [r]\) records the Helly witness chosen for the full component choice \(\boldsymbol{a}\).
\begin{lemma}\label{lem:pattern-count}
Fix an \(m\)-point set \(P\subseteq\R^{d}\) and put \(T=T_{d,r}(\boldsymbol{s})\). The tuple \(\mathcal{D}\) has at most \(\exp(T\log(er))\tau_{d}(P)^{T}\) possible values. Moreover, each possible value of \(\mathcal{D}\) determines the subsets \(Q_{i}\cap P\), \(i\in [r]\).
\end{lemma}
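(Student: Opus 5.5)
We count the two alternatives separately. Which alternative is in force is fixed by the parameters (full-tuple if \(qS(\boldsymbol{s})\le H_{d,r}(\boldsymbol{s})\), sparse otherwise), so no extra bit is needed. In either case \(\mathcal{D}\) consists of a combinatorial skeleton together with a list of at most \(T\) traces \(P\cap H\). Each trace ranges over at most \(\tau_{d}(P)\) values: a factor equal to \(\R^{d}\) gives the trace \(P\), and this is itself a halfspace trace, since some halfspace contains all of \(P\). The bound \(\tau_{d}(P)^{T}\) on the trace part therefore comes straight from Lemma~\ref{lem:halfspace-subsets}. What remains is to bound the skeleton by \(\exp(T\log(er))=(er)^{T}\).

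In the sparse-subtuple alternative the index set of triples \((I,\boldsymbol{a}_{I},i)\) is fixed in advance, and there are exactly \(H_{d,r}(\boldsymbol{s})=T\) of them. So there is no skeleton, and the count is at most \(\tau_{d}(P)^{T}\).

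In the full-tuple alternative the skeleton is the map \(\omega\), which assigns to each of the \(S(\boldsymbol{s})\) full choices a nonempty set of size at most \(q\). Counting naively gives \(\bigl(\sum_{k\le q}\binom{r}{k}\bigr)^{S(\boldsymbol{s})}\). To stay within the budget, we account per occurrence instead. Each \(\omega(\boldsymbol{a})\) is a set of \(k_{\boldsymbol{a}}\ge1\) indices with \(\sum_{\boldsymbol{a}}k_{\boldsymbol{a}}\le T\), and it can be encoded as a sequence of \(k_{\boldsymbol{a}}\) indices from \([r]\). That costs \(r^{k_{\boldsymbol{a}}}\) per full choice, hence \(r^{\sum k_{\boldsymbol{a}}}\) overall. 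We also need the sizes \(k_{\boldsymbol{a}}\ge1\), which form a composition of \(\sum k_{\boldsymbol{a}}\le T\) into \(S(\boldsymbol{s})\) positive parts. Since \(S(\boldsymbol{s})\le T\), there are at most \(2^{T}\le e^{T}\) such compositions. Together this gives \((er)^{T}\) skeletons, and with the traces the bound \(\exp(T\log(er))\tau_{d}(P)^{T}\). The exponent uses \(\tau_{d}(P)^{\sum k_{\boldsymbol{a}}}\le\tau_{d}(P)^{T}\), which holds because \(\tau_{d}(P)\ge1\). This bookkeeping, which charges the \(\omega\)-information to the occurrences, is the step needing care; the rest is routine.

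For the second assertion we reconstruct \(Q_{i}\cap P\) from the record. First, for each \(i\) and \(a\) we determine \(Q_{i,a}\cap P\). By definition, \(Q_{i,a}\) is the intersection of the listed halfspaces \(K_{i,I,\boldsymbol{a}_{I}}\) with \(i\in I\) and \(a_{i}=a\), respectively \(H_{i,\boldsymbol{b}}\) with \(b_{i}=a\) and \(i\in\omega(\boldsymbol{b})\). Hence
\[
Q_{i,a}\cap P=\bigcap\,(P\cap H)
\]
over exactly those recorded traces, where the empty intersection is \(P\). Which indices qualify is read off from the fixed index set in the sparse case, and from \(\omega\) in the full-tuple case. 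Second, we take the union, \(Q_{i}\cap P=\bigcup_{a}(Q_{i,a}\cap P)\). Thus the record determines all traces \(Q_{i}\cap P\).
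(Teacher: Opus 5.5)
Your proposal is correct and follows the paper's proof: you use the same split by which alternative is selected and the same \(\tau_{d}(P)^{T}\) count for the recorded traces (this count is just the definition of \(\tau_{d}(P)\), not a consequence of Lemma~\ref{lem:halfspace-subsets}), and you reconstruct \(Q_{i,a}\cap P\) in the same way, as an intersection of recorded traces followed by a union over \(a\). The only deviation is your composition-based encoding of \(\omega\), which is valid (\(2^{T}r^{T}\le(er)^{T}\)) but unnecessary: in the full-tuple alternative \(T=qS(\boldsymbol{s})\) exactly, so the naive count \(\bigl(\sum_{j=1}^{q}\binom{r}{j}\bigr)^{S(\boldsymbol{s})}\le(er)^{qS(\boldsymbol{s})}\) already fits the budget, and this is the bound the paper uses.
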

\begin{proof}[Proof of Lemma~\ref{lem:pattern-count}]
Recall that \(\tau_{d}(P)\) is the number of subsets of the form \(P\cap H\), where \(H\) is a closed halfspace. We count possible records \(\mathcal{D}\) on the fixed set \(P\). If \(H\) is a closed halfspace, then \(P\cap H\) has at most \(\tau_{d}(P)\) possible values. The same bound also covers the unrestricted set \(\R^{d}\), since \(P\cap\R^{d}=P\), and \(P\) is obtained from a closed halfspace containing all points of \(P\).

First suppose that the sparse-subtuple alternative is selected, so \(T=H_{d,r}(\boldsymbol{s})\). The record has one coordinate \(P\cap K_{i,I,\boldsymbol{a}_{I}}\) for each triple \((I,\boldsymbol{a}_{I},i)\) with \(\emptyset\ne I\subseteq [r]\), \(\abs{I}\le d+1\), \(\boldsymbol{a}_{I}\in\prod_{j\in I}[s_{j}]\), and \(i\in I\). The number of such triples is
\[
\sum_{\substack{\emptyset\ne I\subseteq [r]\\ \abs{I}\le d+1}}\abs{I}\prod_{j\in I}s_{j}=H_{d,r}(\boldsymbol{s})=T.
\]
Each coordinate has at most \(\tau_{d}(P)\) possible values, so this construction gives at most \(\tau_{d}(P)^{T}\) records.

Now suppose that the full-tuple alternative is selected, so \(T=qS(\boldsymbol{s})\). For each full component choice \(\boldsymbol{a}\), the map \(\omega\) chooses a nonempty subset of \([r]\) of size at most \(q\). Hence the number of possible maps \(\omega\) is at most
\[
\left(\sum_{j=1}^{q}\binom{r}{j}\right)^{S(\boldsymbol{s})}\le (er)^{qS(\boldsymbol{s})}=\exp(T\log(er)).
\]
After \(\omega\) is fixed, the remaining coordinates of \(\mathcal{D}\) are the subsets \(P\cap H_{i,\boldsymbol{a}}\) with \(i\in\omega(\boldsymbol{a})\). Their number is \(\sum_{\boldsymbol{a}}\abs{\omega(\boldsymbol{a})}\le qS(\boldsymbol{s})=T\), so these coordinates have at most \(\tau_{d}(P)^{T}\) possible values. Thus this construction gives at most \(\exp(T\log(er))\tau_{d}(P)^{T}\) records. Combining this with the previous case gives the claimed upper bound on the number of possible records \(\mathcal{D}\).

It remains to check that the record determines \(Q_{i}\cap P\). In the sparse-subtuple alternative, for fixed \(i\) and \(a\),
\[
Q_{i,a}\cap P=\bigcap_{\substack{I,\boldsymbol{a}_{I}:i\in I\\ a_{i}=a}}\left(P\cap K_{i,I,\boldsymbol{a}_{I}}\right).
\]
In the full-tuple alternative, for fixed \(i\) and \(a\),
\[
Q_{i,a}\cap P=\bigcap_{\substack{\boldsymbol{b}\in\prod_{j=1}^{r}[s_{j}]:b_{i}=a\\ i\in\omega(\boldsymbol{b})}}\left(P\cap H_{i,\boldsymbol{b}}\right),
\]
where an empty intersection is interpreted as \(P\). Therefore \(\mathcal{D}\) determines every \(Q_{i,a}\cap P\), and then \(Q_{i}\cap P=\bigcup_{a=1}^{s_{i}}(Q_{i,a}\cap P)\).  
\end{proof}

\begin{proof}[Proof of Theorem~\ref{thm:general-upper}]
Recall that \(T=T_{d,r}(\boldsymbol{s})\). Since \(r\ge 2\) and all \(s_{i}\ge 1\), we have \(T\ge 2\). Suppose \(P\subseteq\R^{d}\) has \(m\) points and every surjective map \(P\to [r]\) is bad. We prove \(m\le cdrT\log(erT)\). This is enough, because then any larger point set has at least one surjective map whose parts have the desired intersection property.

After increasing the absolute constant, we may assume \(m\ge d+1\) and \(m\ge r\log(2r)\), otherwise the desired bound is immediate from \(T\ge 2\). There are \(r^{m}\) maps \(P\to [r]\) in total. A map that is not surjective misses at least one value, choosing a missed value and then mapping all points to the remaining \(r-1\) values gives the upper bound \(r(r-1)^{m}\) for the number of non-surjective maps. Hence the number of surjective maps is at least \(r^{m}-r(r-1)^{m}\ge r^{m}(1-re^{-\frac{m}{r}})\ge \frac{r^{m}}{2}\).

Recall that a map \(\chi:P\to [r]\) is bad if, with \(P_{i}=\chi^{-1}(i)\), there are \(s_{i}\)-convex sets \(C_{i}\supseteq P_{i}\) such that \(\bigcap_{i=1}^{r}C_{i}=\emptyset\).
For each bad map, choose witnessing sets \(C_{1},\ldots,C_{r}\). We apply Lemma~\ref{lem:component-reduction} to obtain compact convex sets \(D_{i,a}\), then apply Lemma~\ref{lem:polyhedral-reduction} to obtain the polyhedral sets \(Q_{i}\). We use Lemma~\ref{lem:pattern-count} to bound the possible finite records \(\mathcal{D}\). The next claim fixes one such record and counts how many bad maps can lead to it. This is the key counting step of the proof: the factor \((r-1)^{m}\), rather than \(r^{m}\), is exactly what later gives the improved bound.
\begin{claim}\label{claim:one-tuple}
Fix one possible value of \(\mathcal{D}\). There are at most \((r-1)^{m}\) maps \(\chi:P\to [r]\) that give this same value of \(\mathcal{D}\).
\end{claim}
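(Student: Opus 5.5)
The idea is that a fixed record already pins down, for every point, a proper subset of admissible colors, so the count is simply a product of per-point bounds. By Lemma~\ref{lem:pattern-count}, the fixed value of \(\mathcal{D}\) determines the traces \(W_{i}=Q_{i}\cap P\) for all \(i\in[r]\). I will extract two properties of these traces and then count.

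The first property is that \(W_{i}\) contains the color class. Suppose a map \(\chi\) gives this value of \(\mathcal{D}\). Then \(\mathcal{D}\) was produced from witnessing sets for \(\chi\), via Lemma~\ref{lem:component-reduction} followed by Lemma~\ref{lem:polyhedral-reduction}. Those lemmas guarantee \(\chi^{-1}(i)=P_{i}\subseteq Q_{i}\), hence \(\chi^{-1}(i)\subseteq W_{i}\) for every \(i\). Equivalently, every point \(\boldsymbol{p}\in P\) satisfies \(\chi(\boldsymbol{p})\in A(\boldsymbol{p})\), where \(A(\boldsymbol{p})=\{i\in[r]:\boldsymbol{p}\in W_{i}\}\). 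The crucial point is that \(A(\boldsymbol{p})\) depends only on the record, not on \(\chi\).

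The second property is that no point lies in every trace. The record is a possible value, so it arises from at least one construction, and that construction has \(\bigcap_{i=1}^{r}Q_{i}=\emptyset\). Since the traces \(W_{i}\) are determined by the record alone, every construction yielding this record has the same traces. Therefore \(\bigcap_{i=1}^{r}W_{i}\subseteq\bigcap_{i=1}^{r}Q_{i}=\emptyset\), and so \(\abs{A(\boldsymbol{p})}\le r-1\) for every \(\boldsymbol{p}\in P\).

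Combining the two properties, every map giving this record lies in \(\prod_{\boldsymbol{p}\in P}A(\boldsymbol{p})\). Hence the number of such maps is at most \(\prod_{\boldsymbol{p}}\abs{A(\boldsymbol{p})}\le(r-1)^{m}\).

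The only delicate step is the second property: the emptiness of \(\bigcap_{i}W_{i}\) must be a feature of the record itself, not of a particular map. The determination statement in Lemma~\ref{lem:pattern-count} handles this, because it makes the traces functions of \(\mathcal{D}\) alone.
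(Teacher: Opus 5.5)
Your proof is correct and follows essentially the same argument as the paper. The record fixes the traces $Q_{i}\cap P$, any map giving that record must send each point to a label whose trace contains it, and the emptiness of $\bigcap_{i}Q_{i}$ leaves at most $r-1$ such labels per point; your set $A(\boldsymbol{p})$ is the set $L(\boldsymbol{x})$ the paper introduces explicitly in the proof of Theorem~\ref{thm:block}, and a value produced by no map is trivially covered, so nothing is missing.
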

\begin{poc}
    The fixed value of \(\mathcal{D}\) determines the sets \(Q_{1}\cap P,\ldots,Q_{r}\cap P\). These sets have empty total intersection, because the actual sets \(Q_{1},\ldots,Q_{r}\) built in Lemma \ref{lem:polyhedral-reduction} have empty total intersection. Therefore no point of \(P\) can lie in all \(r\) of the sets \(Q_{i}\).

    Now take a bad map \(\chi:P\to [r]\) that gives this same value of \(\mathcal{D}\). If \(\chi(\boldsymbol{x})=i\), then the construction has \(P_{i}\subseteq Q_{i}\), so in particular \(\boldsymbol{x}\in Q_{i}\). But \(\boldsymbol{x}\) is missing from at least one of the \(r\) sets \(Q_{1},\ldots,Q_{r}\). Hence there are at most \(r-1\) possible values of \(\chi(\boldsymbol{x})\). This is true independently for every point of \(P\), so a fixed value of \(\mathcal{D}\) can come from at most \((r-1)^{m}\) maps.

\end{poc}
There are at least \(\frac{r^{m}}{2}\) surjective maps, and by our assumptions, all of them are bad. We apply Lemma~\ref{lem:pattern-count} to bound the number of possible values of \(\mathcal{D}\), and Claim~\ref{claim:one-tuple} to bound how many maps can lead to one fixed value. Therefore
\[
\frac{1}{2}r^{m}\le (r-1)^{m}\exp(T\log(er))\tau_{d}(P)^{T}.
\]
We apply Lemma~\ref{lem:halfspace-subsets} to obtain \(\tau_{d}(P)\le \left(\frac{em}{d+1}\right)^{d+1}\). Taking logarithms yields
\[
m\log\left(\frac{r}{r-1}\right)\le T\log(er)+(d+1)T\log\left(\frac{em}{d+1}\right)+\log 2.
\]
Since \(\log\left(\frac{r}{r-1}\right)\ge \frac{1}{r}\), \(d+1\le 2d\), \(m\ge d+1\), and \(T\ge 2\), there is an absolute constant \(c_{1}\) such that
\[
\frac{m}{d}\le c_{1}rT\log\left(\frac{c_{1}erm}{d}\right).
\]
We shall use the elementary fact that if \(x\le a\log(bx)\), with
\(a,b\ge 2\), then \(x\le c_{\mathrm{abs}}a\log(ab)\) for an absolute constant \(c_{\mathrm{abs}}\).
Applying this with \(x=\frac{m}{d}\), \(a=c_{1}rT\), and \(b=c_{1}er\), gives 
\(m\le c_{2}drT\log(erT)\) for some absolute constant \(c_{2}>0\). This proves the theorem.

\end{proof}

\subsection{Proof of Theorem~\ref{thm:block}}
We now prove the complete-transversal version. The geometric part of the proof is exactly the one just used for Theorem~\ref{thm:general-upper}: once a labeled partition has been fixed, a bad partition produces the same finite record \(\mathcal{D}\). What changes is only the final counting step. In Theorem~\ref{thm:general-upper}, each point had at most \(r-1\) possible labels after \(\mathcal{D}\) was fixed, here the labels inside each prescribed \(r\)-point class must form a permutation, so we need a permanent bound.

The following elementary consequence of Lemma~\ref{lem:bregman} is the main tool in this proof. It states that, if every point in one prescribed class is missing at least one label, then at most half of all permutations of the labels remain possible.
\begin{lemma}\label{lem:permanent}
Let \(M\) be an \(r\times r\) zero-one matrix whose row sums are all at most \(r-1\). Then \(\operatorname{per}(M)\le \frac{r!}{2}\).
\end{lemma}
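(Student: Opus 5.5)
The plan is to apply Bregman's bound from Lemma~\ref{lem:bregman} directly and then compare the resulting product with \(r!/2\). If some row sum \(a_{j}\) is zero, then \(\operatorname{per}(M)=0\le r!/2\) and there is nothing to prove. Otherwise every row sum satisfies \(1\le a_{j}\le r-1\). Since the function \(a\mapsto (a!)^{1/a}\) is nondecreasing in \(a\) (the geometric mean of \(1,\ldots,a\) does not decrease when the larger factor \(a+1\) is appended), we obtain
\[
\operatorname{per}(M)\le\prod_{j=1}^{r}(a_{j}!)^{\frac{1}{a_{j}}}\le\bigl((r-1)!\bigr)^{\frac{r}{r-1}}.
\]

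It therefore remains to check the elementary inequality \(\bigl((r-1)!\bigr)^{r/(r-1)}\le r!/2\). Dividing both sides by \((r-1)!\), this becomes
\[
\bigl((r-1)!\bigr)^{\frac{1}{r-1}}\le\frac{r}{2}.
\]
The left side is the geometric mean of \(1,2,\ldots,r-1\). By the AM--GM inequality it is at most their arithmetic mean, which equals \(r/2\). This proves the inequality for every \(r\ge2\). Equality holds only when \(r=2\), where \(M\) has at most one \(1\) per row and \(\operatorname{per}(M)\le1=2!/2\). That is consistent with the bound.

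There is no real obstacle here. The only points needing care are the monotonicity of \((a!)^{1/a}\), which is standard, and the AM--GM step, both of which are routine. As a safeguard, one can instead bound each row factor separately: \((a_{j}!)^{1/a_{j}}\le (a_{j}+1)/2\le r/2\). The product of these factors is then at most \((r/2)^{r}\), but this is weaker than \(r!/2\) for large \(r\). So I would use the first route, applying the uniform bound \(((r-1)!)^{1/(r-1)}\) to every row.
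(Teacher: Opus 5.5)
Your proof is correct and follows the paper's argument. You dispose of a zero row, apply Bregman's bound, use the monotonicity of \((a!)^{1/a}\) to reach \(((r-1)!)^{r/(r-1)}\), and then show this is at most \(r!/2\); the paper asserts that last inequality without proof, and your AM--GM step \(((r-1)!)^{1/(r-1)}\le r/2\) supplies the missing justification.
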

\begin{proof}[Proof of Lemma~\ref{lem:permanent}]
Let the row sums be \(a_{1},\ldots,a_{r}\). If some \(a_{j}=0\), then the permanent is zero. Otherwise we apply Lemma~\ref{lem:bregman} and obtain \(\operatorname{per}(M)\le \prod_{j=1}^{r}(a_{j}!)^{\frac{1}{a_{j}}}\). Since each \(a_{j}\le r-1\), we get \(\operatorname{per}(M)\le ((r-1)!)^{\frac{r}{r-1}}\), which is at most \(\frac{r!}{2}\).
\end{proof}

\begin{proof}[Proof of Theorem~\ref{thm:block}]
We again put \(T=T_{d,r}(\boldsymbol{s})\). Let \(A_{1},\ldots,A_{m}\subseteq\R^{d}\) be pairwise disjoint \(r\)-point sets, and put \(P=A_{1}\sqcup\cdots\sqcup A_{m}\). Suppose, toward the contrapositive, that every labeled partition \(P=P_{1}\sqcup\cdots\sqcup P_{r}\) with \(\abs{P_{i}\cap A_{j}}=1\) for all \(i,j\) is bad. We then show \(m\le cdT\log(erT)\).

Also, after increasing the absolute constant, we may assume \(m\ge d+1\), otherwise the desired bound is immediate from \(T\ge 2\). Notice that there are exactly \((r!)^{m}\) labeled partitions of the required complete-transversal form. Each of them is bad. We apply Lemmas~\ref{lem:component-reduction} and~\ref{lem:polyhedral-reduction}, followed by Lemma~\ref{lem:pattern-count} on the ground set \(P\), to assign one finite record \(\mathcal{D}\) to it. Since \(\abs{P}=mr\ge d+1\), we apply Lemma~\ref{lem:halfspace-subsets} and obtain \(\tau_{d}(P)\le \left(\frac{emr}{d+1}\right)^{d+1}\). Hence the number of possible values of \(\mathcal{D}\) is at most \(\exp(T\log(er))\left(\frac{emr}{d+1}\right)^{(d+1)T}\).

Fix one value of \(\mathcal{D}\). It determines the sets \(Q_{1}\cap P,\ldots,Q_{r}\cap P\), and their total intersection is empty. For each point \(\boldsymbol{x}\in P\), define \(L(\boldsymbol{x})=\{i\in [r]:\boldsymbol{x}\in Q_{i}\}\). Then \(\abs{L(\boldsymbol{x})}\le r-1\) for every \(\boldsymbol{x}\in P\).

Consider one prescribed class, say \(A_{j}=\{\boldsymbol{x}_{j,1},\ldots,\boldsymbol{x}_{j,r}\}\). A labeled partition of this class is a permutation \(\sigma\in S_{r}\), where \(\boldsymbol{x}_{j,a}\) is put into part \(\sigma(a)\). This permutation can lead to the fixed value of \(\mathcal{D}\) only if \(\sigma(a)\in L(\boldsymbol{x}_{j,a})\) for every \(a\in [r]\). The number of such permutations is the permanent of the zero-one matrix \(M^{(j)}\) with \(M^{(j)}_{a,i}=1\) exactly when \(i\in L(\boldsymbol{x}_{j,a})\). Every row of \(M^{(j)}\) has at most \(r-1\) ones, so we apply Lemma~\ref{lem:permanent} and obtain at most \(\frac{r!}{2}\) choices for this class. The prescribed classes are independent after \(\mathcal{D}\) is fixed, and therefore one fixed value of \(\mathcal{D}\) can come from at most \((\frac{r!}{2})^{m}\) labeled partitions.

Counting all \((r!)^{m}\) labeled partitions by the value of \(\mathcal{D}\) gives
\[
(r!)^{m}\le \left(\frac{r!}{2}\right)^{m}\exp(T\log(er))\left(\frac{emr}{d+1}\right)^{(d+1)T}.
\]
After canceling \((r!)^{m}\) and taking logarithms, we obtain \(m\log 2\le T\log(er)+(d+1)T\log\left(\frac{emr}{d+1}\right)\). Since \(d+1\le 2d\) and \(m\ge d+1\), there is an absolute constant \(c_{1}\) such that
\[
\frac{m}{d}\le c_{1}T\log\left(\frac{c_{1}erm}{d}\right).
\]
The same logarithmic absorption argument as above gives \(m\le c_{2}dT\log(erT)\). This finishes the proof.
\end{proof}

\section{Proof of Theorem~\ref{thm:rainbow-obstruction}}
\begin{proof}[Proof of Theorem~\ref{thm:rainbow-obstruction}]
Recall that \(\gamma_{s}(r,d)\) is the least integer \(t\) with the following property. For every collection of pairwise disjoint finite sets \(Y_{0},\ldots,Y_{d}\subseteq\R^{d}\) with \(\abs{Y_{j}}\ge t\), there are pairwise disjoint rainbow sets \(R_{1},\ldots,R_{r}\), meaning \(\abs{R_{i}\cap Y_{j}}=1\) for every \(i\in [r]\) and \(j\in\{0,\ldots,d\}\), such that every choice of \(s\)-convex sets \(C_{i}\supseteq R_{i}\), \(i\in [r]\), satisfies \(\bigcap_{i=1}^{r}C_{i}\ne\emptyset\). Fix \(r\ge2\), \(d\ge1\), and \(s\ge2\). It is enough to show that, for every integer \(t\), there are color classes \(Y_{0},\ldots,Y_{d}\) of size \(t\) for which no \(r\) disjoint rainbow sets satisfy this condition. If \(t<r\), this is immediate, because a class of size \(t\) cannot meet \(r\) disjoint rainbow sets. Hence assume \(t\ge r\).

Let \(\boldsymbol{\mu}:\R\to\R^{d}\) be the moment curve, \(\boldsymbol{\mu}(u)=(u,u^{2},\ldots,u^{d})\). Choose \((d+1)t\) distinct points on this curve and distribute \(t\) of them to each \(Y_{j}\). Any at most \(d+1\) of these points are affinely independent. Indeed, for parameters \(u_{0},\ldots,u_{k}\) with \(k\le d\), the relevant affine independence matrix contains the Vandermonde matrix with rows \(1,u,\ldots,u^{k}\), whose determinant is \(\prod_{0\le a<b\le k}(u_{b}-u_{a})\ne0\).

We need the following property of the moment curve. It is automatic in odd dimension, the point of the proof is to handle even dimension as well.

\begin{claim}\label{claim:two-piece-moment}
Let \(A\) and \(B\) be disjoint \((d+1)\)-point subsets of the moment curve in \(\R^{d}\). There are partitions \(A=A_{0}\sqcup A_{1}\) and \(B=B_{0}\sqcup B_{1}\) such that \(\operatorname{conv}(A_{\alpha})\cap\operatorname{conv}(B_{\beta})=\emptyset\) for all \(\alpha,\beta\in\{0,1\}\).
\end{claim}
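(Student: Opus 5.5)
The plan is to translate separation on the moment curve into a statement about sign patterns of univariate polynomials. For coefficients \(c_{0},\ldots,c_{d}\), not all of \(c_{1},\ldots,c_{d}\) zero, the affine function \(\lambda(\boldsymbol{x})=c_{0}+\sum_{i=1}^{d}c_{i}x_{i}\) satisfies \(\lambda(\boldsymbol{\mu}(u))=p(u)\), where \(p(u)=c_{0}+c_{1}u+\cdots+c_{d}u^{d}\). Conversely, every nonconstant polynomial of degree at most \(d\) arises in this way. So \(\operatorname{conv}(A_{\alpha})\cap\operatorname{conv}(B_{\beta})=\emptyset\) follows once we find such a \(p\) that is strictly positive on the parameters of \(A_{\alpha}\) and strictly negative on the parameters of \(B_{\beta}\). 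The set \(\{\lambda>0\}\) is then an open halfspace containing \(\operatorname{conv}(A_{\alpha})\), and \(\{\lambda<0\}\) is a disjoint one containing \(\operatorname{conv}(B_{\beta})\).

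The basic criterion concerns a finite set of parameters colored \(+\) and \(-\), listed in increasing order. Let \(\kappa\) be the number of sign changes between consecutive elements, and assume \(1\le\kappa\le d\). Choose one root strictly between each pair of consecutive differently colored parameters and set \(p(u)=\pm\prod(u-r_{i})\). This is a nonconstant polynomial of degree \(\kappa\le d\) with the required signs. Since \(A\) and \(B\) are nonempty, whenever \(A_{\alpha}\) and \(B_{\beta}\) are both nonempty we have \(\kappa\ge1\). If one of them is empty, its convex hull is empty and there is nothing to check. In particular, any union \(A_{\alpha}\sqcup B_{\beta}\) with at most \(d+1\) points has \(\kappa\le d\) automatically.

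This settles odd \(d\) at once: split each of \(A\) and \(B\) into two halves of size \((d+1)/2\). Every pair \(A_{\alpha}\sqcup B_{\beta}\) then has \(d+1\) points, so each of the four separations exists.

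For even \(d\), put \(k=d/2+1\). Take \(A_{1},B_{1}\) of size \(k\) and \(A_{0},B_{0}\) of size \(k-1\). Three of the four pairs have at most \(d+1\) points and are handled as above. The only difficult pair is \(A_{1}\sqcup B_{1}\), which has \(d+2\) points. For it we must arrange that, in its increasing order, at least one pair of consecutive elements has the same color, giving \(\kappa\le d\). This choice is the main obstacle, and I would resolve it with a case split on the global order of the \(2d+2\) parameters of \(A\cup B\).

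In the first case, some two consecutive parameters in the global order have the same color, say both in \(A\). Put both into \(A_{1}\) and fill \(A_{1}\) and \(B_{1}\) arbitrarily up to size \(k\). No point of \(B\) lies between these two points, so they remain consecutive in \(A_{1}\sqcup B_{1}\). In the second case the global order alternates, say \(a,b,a,b,\ldots\); the other start is symmetric. Let \(A_{1}\) consist of the \(k\) smallest points of \(A\) and \(B_{1}\) of the \(k\) largest points of \(B\). The largest element of \(A_{1}\) sits at position \(2k-1=d+1\), and the smallest element of \(B_{1}\) sits at position \(2(d+2-k)=d+2\). Hence all of \(A_{1}\) precedes all of \(B_{1}\), and \(\kappa=1\). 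In both cases the criterion gives all four disjointness statements, which proves the claim.
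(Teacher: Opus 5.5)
Your proof is correct. Its skeleton matches the paper's: the same part sizes, three pairs with at most \(d+1\) points that are separated automatically, and one pair of \(d+2\) points in which two same-colored points must be made adjacent in the parameter order. The two key ingredients are different, though.

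For separation, the paper argues on the primal side. It uses Vandermonde affine independence for at most \(d+1\) points. For \(d+2\) points it computes the affine dependence by Lagrange interpolation and shows that it is unique with alternating signs. It concludes that the alternating bipartition is the only one with intersecting hulls. You argue on the dual side: a polynomial with one root in each color change gives an explicit strictly separating hyperplane. That single criterion covers both the small pairs and the \((d+2)\)-point pair, and you only need this easy direction. You never have to show that alternating bipartitions actually intersect.

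For the combinatorial choice, the paper uses one pigeonhole step. Some gap between consecutive parameters of \(A\) contains at most one parameter of \(B\). The paper puts both endpoints of that gap into the large part of \(A\) and draws the large part of \(B\) from outside the gap. You instead split into two cases: a globally adjacent same-colored pair, or a fully alternating order handled by a prefix/suffix choice.

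Both routes are equally elementary. The paper's avoids a case split and yields the full characterization of the Radon partition on the moment curve. Yours is more uniform on the geometric side and produces the separating hyperplanes directly.
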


\begin{poc}
Suppose first that \(d=2k+1\). Split each of \(A\) and \(B\) into two sets of size \(k+1\). For every \(\alpha,\beta\in\{0,1\}\), the set \(A_{\alpha}\sqcup B_{\beta}\) has \(2k+2=d+1\) points and is therefore affinely independent. Its two disjoint subsets have disjoint convex hulls, since an intersection would give two convex combinations of the same point and hence a nonzero affine dependence.

Now let \(d=2k\). We first record the affine dependence among \(d+2\) points on the moment curve. If \(u_{1}<\cdots<u_{d+2}\), put \(\lambda_{i}=\left(\prod_{j\ne i}(u_{i}-u_{j})\right)^{-1}\). The coefficient of \(x^{d+1}\) in the Lagrange interpolation formula for \(x^{m}\) at \(u_{1},\ldots,u_{d+2}\) gives \(\sum_{i=1}^{d+2}\lambda_{i}u_{i}^{m}=0\) for every \(m\in\{0,\ldots,d\}\). Thus \(\sum_{i=1}^{d+2}\lambda_{i}=0\) and \(\sum_{i=1}^{d+2}\lambda_{i}\boldsymbol{\mu}(u_{i})=\boldsymbol{0}\). The signs of \(\lambda_{1},\ldots,\lambda_{d+2}\) alternate. Moreover, this affine dependence is unique up to multiplication by a nonzero scalar, because every \(d+1\) of the points are affinely independent. If a bipartition of these \(d+2\) points had intersecting convex hulls, subtracting the two corresponding convex combinations would give an affine dependence. Uniqueness and the fact that every \(\lambda_{i}\) is nonzero force the two parts to be exactly the two alternating classes in the parameter order.

Write \(A=\{\boldsymbol{\mu}(a_{1}),\ldots,\boldsymbol{\mu}(a_{2k+1})\}\), where \(a_{1}<\cdots<a_{2k+1}\). Among the \(2k\) open intervals \((a_{i},a_{i+1})\), at least one contains at most one parameter belonging to a point of \(B\), otherwise these intervals would contain at least \(4k>2k+1=\abs{B}\) such parameters. Fix such an interval \((a_{h},a_{h+1})\). Choose \(A_{0}\) to contain \(\boldsymbol{\mu}(a_{h})\), \(\boldsymbol{\mu}(a_{h+1})\), and any further \(k-1\) points of \(A\). At least \(2k\) points of \(B\) have parameters outside this interval, so choose \(B_{0}\) to be any \(k+1\) of them. Set \(A_{1}=A\setminus A_{0}\) and \(B_{1}=B\setminus B_{0}\).

The two points \(\boldsymbol{\mu}(a_{h})\) and \(\boldsymbol{\mu}(a_{h+1})\) are consecutive in the parameter order on \(A_{0}\sqcup B_{0}\), so the bipartition \(A_{0}\sqcup B_{0}\) is not alternating. By the preceding paragraph, \(\operatorname{conv}(A_{0})\cap\operatorname{conv}(B_{0})=\emptyset\). For every other pair \((\alpha,\beta)\ne(0,0)\), at least one of \(A_{\alpha}\) and \(B_{\beta}\) has size \(k\). Their union therefore has at most \(2k+1=d+1\) points and is affinely independent, which again implies that their convex hulls are disjoint. This proves the claim.
\end{poc}

Now fix arbitrary pairwise disjoint rainbow sets \(R_{1},\ldots,R_{r}\). Each \(R_{i}\) has exactly \(d+1\) points. Apply Claim \ref{claim:two-piece-moment} to \(R_{1}\) and \(R_{2}\), obtaining partitions \(R_{1}=R_{1,0}\sqcup R_{1,1}\) and \(R_{2}=R_{2,0}\sqcup R_{2,1}\). Set \(C_{1}=\operatorname{conv}(R_{1,0})\cup\operatorname{conv}(R_{1,1})\) and \(C_{2}=\operatorname{conv}(R_{2,0})\cup\operatorname{conv}(R_{2,1})\). These sets are \(s\)-convex because \(s\ge2\), and Claim \ref{claim:two-piece-moment} gives \(C_{1}\cap C_{2}=\emptyset\). Taking \(C_{i}=\R^{d}\) for \(i=3,\ldots,r\), we obtain \(\bigcap_{i=1}^{r}C_{i}=\emptyset\).

Thus every possible choice of pairwise disjoint rainbow sets fails the robust condition for these classes \(Y_{0},\ldots,Y_{d}\). Since the construction works for every \(t\), no finite value of \(\gamma_{s}(r,d)\) exists. This finishes the proof.
\end{proof}

\section*{Acknowledgment}
The authors are grateful to Prof. Gil Kalai for many insightful comments and stimulating discussions following our previous joint work with Wenchong Chen and Zhouningxin Wang on this topic~\cite{ChenGeShuWangXu2025}. In particular, he suggested investigating whether the dependence on \(r\) in the Alon--Smorodinsky upper bound could be improved, and whether analogous colored extensions could be established for unions of convex sets. The authors also thank Prof. Noga Alon and Prof. Shakhar Smorodinsky for their inspiring comments and helpful discussions on this topic.

\bibliographystyle{abbrv}
\bibliography{tverberg_topics}
\end{document}